%%%%%%%%%%%%%%%%%%%%%%%%%%%%%%%%%%%%%%%%%%%%%%%%%%%%%%%%%%%%%%%%%%
\documentclass[12pt,reqno]{amsart}

\setlength{\textheight}{22cm}
\setlength{\textwidth}{16cm}
\setlength{\topmargin}{-0.8cm}
\setlength{\parskip}{0.3\baselineskip}\hoffset=-1.4cm
%%%%%%%%%%%%%%%%%%%%%%%%%%%%%%%%%%%%%%%%%%%%%%%%%%%%%%%%%%%%%%%%%%
\usepackage{amsfonts,amssymb}

\usepackage{graphicx}
\usepackage{epsfig}
\usepackage{enumitem}
\usepackage{latexsym}
\usepackage{amsmath,amsthm}
\usepackage{mathrsfs}
\usepackage[all,cmtip]{xy}

\usepackage[pagebackref=true]{hyperref}
\hypersetup{   pdfborder={0 0 0},   
              colorlinks = false}
\usepackage{tikz-cd}

\renewcommand*{\backref}[1]{}
\renewcommand*{\backrefalt}[4]{%
  \ifcase #1 %
    No citations.% use \relax if you do not want the "No citations" message
  \or
    ($\uparrow$ #4).%
  \else
    ($\uparrow$ #4).%
  \fi%
}
%\setcounter{tocdepth}{1}

%%%%%%%%%%%%%%%%%%%%%%%%%%%%%%%%%%%%%%%%%%%%%%%%%%%%%%%%%%%%%%%%%%
\theoremstyle{plain}
\newtheorem{theorem}{Theorem}[section]
\newtheorem{lemma}[theorem]{Lemma}
\newtheorem{definition}[theorem]{Definition}
\newtheorem{proposition}[theorem]{Proposition}
\newtheorem{cor}[theorem]{Corollary}
\newtheorem{remark}[theorem]{Remark}

\numberwithin{equation}{section}

%%%%%%%%%%%%%%%%%%%%%%%%%%%%%%%   New-Commands   %%%%%%%%%%%%%%%%%%%%%%%%%%%%%%%%%%
\newcommand{\pushright}[1]{\ifmeasuring@#1\else\omit\hfill$\displaystyle#1$\fi\ignorespaces}
\newcommand{\pushleft}[1]{\ifmeasuring@#1\else\omit$\displaystyle#1$\hfill\fi\ignorespaces}
\newcommand{\trans}[2]{\tau_{#1#2}}
\newcommand{\cocycle}[3]{g_{#1#2}^{#3}}
\newcommand{\conjcocycle}[3]{\overline{g}_{#1#2}^{#3}}
\newcommand{\dholos}[1]{\mathfrak{#1}}

\newcommand{\bigslant}[2]{{\raisebox{.2em}{$#1$}\left/\raisebox{-.2em}{$#2$}\right.}}
%%%%%%%%%%%%%%%%%%%%%%%%%%%%%%%%%%

%

%
\hypersetup{colorlinks, citecolor=blue, filecolor=black, linkcolor=red}
%

%

%%%%%%%%%%%%%%%%%%%%%%%%%%%%%%%%%%%%%%%%%%%%%%%%%%%%%%%%%%%%%%%%%%%%%%%%%%%%%%%%%%%

%--------------------------------------------------------------------------------%
%%%%%%%%%%%%%%%%%%%%%%%%%%%%%%   Title and Author  %%%%%%%%%%%%%%%%%%%%%%%%%%%%%%%
\begin{document}
\baselineskip=15.5pt

\title{Hodge decomposition theorem on compact $d$-K\"ahler manifolds}
\author{Sanjay~Amrutiya}
\address{Department of Mathematics, IIT Gandhinagar,
 Near Village Palaj, Gandhinagar - 382355, India}
 \email{samrutiya@iitgn.ac.in}
\author{Ayush~Jaiswal}
\address{Department of Mathematics, Indian Institute of Science Education and 
Research (IISER) Tirupati, Andhra Pradesh-517507, India}
\email{ayushjaiswal@labs.iisertirupati.ac.in; ayushjwl.math@gmail.com}
\subjclass[2000]{Primary: 53C56; Secondary: 53C25, 53C07}
\keywords{Hodge decompoition; $d$-complex manifold}
\thanks{SA is supported by the SERB-DST under project no. CRG/2023/000477. AJ is supported by the 
Institute Post-Doctoral Fellowship (IISER-T/Offer/PDRF(M)/A.J/04/2023) by IISER Tirupati.}
\date{}

%%%%%%%%%%%%%%%%%%%%%%%%%%%%%   Main Document   %%%%%%%%%%%%%%%%%%%%%%%%%%%%%%%%%%
\begin{abstract}
In this article, we will explore the fundamental concepts, including various basic concepts on 
$d$-complex manifolds, along with several differential operators and examine the relationships 
between them. A $d$-K\"ahler manifold is a $d$-complex manifold equipped with a metric that 
satisfies a specific condition. We prove the Hodge decomposition theorem on compact $d$-K\"ahler 
manifolds, which establishes a crucial relationship between certain de-Rham cohomology groups and 
Dolbeault cohomology groups on a compact $d$-K\"ahler manifold .

\end{abstract}
\maketitle
%\tableofcontents

\section{Introduction}
The Hodge decomposition theorem is a cornerstone in complex differential geometry, 
providing insights into the interplay between topology, analysis, and algebraic geometry. This 
article establishes an analogous theorem on compact d-K\"ahler manifolds, exploring various 
fundamental concepts on $d$-complex manifolds. 

In Section \ref{section2}, we will review the prerequisites, including the definitions of $d$-complex 
manifolds. This involves studying $d$-complex vector bundles, almost $d$-complex structure, 
$d$-complex smooth forms, and $d$-Hermitian metric.
In Section \ref{section3}, we will introduce several differential operators and examine their relationships. 
These operators will play a pivotal role in both computations and proofs throughout the article.
For a deeper analysis of these operators, the Appendix \ref{appendix:sectionA} provides the necessary 
framework using certain Sobolev spaces. The treatment in Appendix is adapted from \cite[Chapter 4]{ROW} 
tailored for $d$-complex manifolds. Section \ref{section4} begins with the concepts of $d$-K\"ahler manifold
and Hodge $\star$-operator, where the analysis is particularly simplified. After preparing this groundwork,
we present the Hodge decomposition theorem on a compact $d$-K\"ahler manifold. The basic examples of 
$d$-K\"ahler manifolds are Klein surfaces. Using the Hodge decomposition theorem, we also give examples of 
$d$-complex manifolds that do not admit $d$-K\"ahler structure. The last section discusses 
$d$-holomorphic connections on $d$-holomorphic vector bundles. More specifically, we prove that if 
$d$-holomorphic connection exists on a $d$-holomorphic vector bundle $E$, then all the Chern classes of 
$E$ vanishes. This fact was asserted in \cite[Remark 22]{SAAJ}.

\subsection*{Notations}
\begin{center}
\begin{tabular}{p{2.5cm}p{0.5cm}p{13cm}}
$(X,\dholos{X})$ &=& Smooth manifold $X$ with $d$-complex structure $\dholos{X}$.\\ & & \\

$(X,\mathcal{J})$ &=& Smooth manifold $X$ with almost $d$-complex structure $\mathcal{J}$.\\ &&\\

$\pi:E\rightarrow X$ &=& Smooth $d$-complex vector bundle on $d$-complex manifold\\ & & \\

$\pi:\mathcal{E}\rightarrow X$ &=& $d$-holomorphic vector bundle on $d$-complex manifold $(X,\dholos{X})$.\\ && \\
$\mathcal{U}$ &=& $d$-holomorphic atlas $\mathcal{U}=\{(U_i,\phi_i)\}_{i\in I}$ on $X$\\  & & \\

$\mathcal{U}^E$ &=& Trivialization for $d$-complex bundle $E$, $\mathcal{U}^E=\{(U_i,\psi_i)\}_{i\in I}$\\  & & \\

$\cocycle{j}{i}{E}$ &=& Co-cycle map for the bundle $E$\\ & & \\

$\trans{j}{i}$ &=& Transition map $\trans{j}{i}:\phi_i(U_i\cap U_j)\rightarrow \phi_j(U_i\cap U_j)$ \\ & & \\

$\trans{j}{i}^E$ &=& Transition map $\trans{j}{i}^E=(\trans{j}{i},\cocycle{j}{i}{E} \circ \phi_i^{-1}):\phi_i(U_i\cap U_j)\rightarrow \phi_j(U_i\cap U_j)\times \mathrm{GL}(n,\mathbb{C})\text{ or, }\mathrm{GL}(n,\mathbb{R})$ \\ & & \\

$T_X$, $T^\star_X$ &=& The tangent and co-tangent bundle for the smooth manifold associated to $(X,\dholos{X})$.\\ & & \\

$T^\mathcal{C}_X$, ${T^\star}^\mathcal{C}_X$ &=& The smooth $d$-complex tangent and co-tangent bundle for $d$-complex\\
&& manifold $(X,\dholos{X})$.\\ & & \\

$D(\trans{j}{i})$ &=& Jacobian of the map $\trans{j}{i}$ i.e. $D(\trans{j}{i})=\frac{\partial(x_j^1,y_j^1,x_j^2,y_j^2,\dots,x_j^n,y_j^n)}{\partial(x_j^1,y_j^1,x_i^2,y_i^2,\dots,x_i^n,y_i^n)}$.\\ & & \\

$T^{(p,q)}(X)$ &=& The space of smooth $d$-complex vector fields of type $(p,q)$.\\ & & \\

${T^\star}^{(p,q)}(X)$ &=& The space of smooth $d$-complex forms of type $(p,q)$.\\ & & \\

$\mathcal{A}^p(X)$ &=& The space of smooth $p$-forms on the associated smooth manifold $X$.\\ & & \\

$\mathcal{A}^p(E)(X)$ &=& The space of smooth $p$-forms on the associated smooth manifold taking values in bundle $E$ on $X$.\\ & & \\

$\mathcal{A}^p_\mathcal{C}(X)$ &=& The space of $d$-complex smooth $p$-forms on $(X,\dholos{X})$.\\ & & \\

$\mathcal{A}^{(p,q)}(X)$ &=& The space of $d$-complex smooth $(p,q)$-forms on $(X,\dholos{X})$.\\ & & \\

$H^p_{dR}(X)$ &=& The de Rham cohomology group of smooth manifold $X$.\\ 
\end{tabular}
\end{center}
\begin{center}
\begin{tabular}{p{2.5cm}p{0.5cm}p{13cm}}
$H^p_{dB}(X)$ &=& The Dolbeault cohomology group of $d$-complex manifold $(X,\dholos{X})$.\\ &&\\

$\mathrm{Diff_m(E, F)}$ &=& The space of differential operators between $d$-complex bundles $E$ and $F$. \\ && \\

$\mathrm{PDiff_m(E, F)}$ &=& The space of pseudo-differential operators between $d$-complex bundles $E$ and $F$.\\ && \\

$\mathrm{OP_m(E, F)}$ &=&the differential operators of order $m$, which have continuous extension to sobolev completions.\\ && \\

$\mathcal{H}_L(E)$ &=& The kernel of an operator $L$, which is subset of $E(X)$. \\ & & \\

$\mathcal{A}^0_{\mathcal{C},k}$ &=& The sheaf of $d$-complex $k$-times differentiable functions on $(X,\dholos{X})$.\\ & & \\

$\big(U\rightarrow \mathcal{D}(U)\big)$ &=& Sheaf of smooth function with compact support on $X$.\\ && \\

$\mathcal{O}_X^{dh}$ &=& Sheaf of $d$-holomorphic functions on $d$-complex manifold $(X,\dholos{X})$.\\
\end{tabular}
\end{center}
\section{$d$-complex and almost $d$-complex manifolds}\label{section2}
In this section, we begin by reviewing the concepts of $d$-complex manifolds, almost $d$-complex 
structures, differential forms, and $d$-Hermitian metrics on $d$-complex vector bundles.
See \cite{SW1} for a more general notion of twisted complex manifolds.

Let $X$ be a topological smooth real manifold of even dimension $2n$ (for some positive integer $n$),
which may be orientable or non-orientable. A $d$-holomorphic atlas is a collection
$\mathcal{U}={(U_i,\phi_i)}_{i\in I}$, where $\phi_i: U_i\rightarrow V_i\subset\mathbb{C}^n$ is a 
homeomorphism, such that the transition map 
$$
\trans{j}{i}=\phi_j\circ\phi_i^{-1}:\phi_i(U_i\cap U_j)\rightarrow \phi_j(U_i\cap U_j)\qquad (i,j\in I)
$$
is either holomorphic or anti-holomorphic on each connected component of $\phi_i(U_i\cap U_j)$ and $\displaystyle\bigcup_{i\in I}U_i=X$. The unique maximal $d$-holomorphic atlas containing 
$\mathcal{U}={(U_i,\phi_i)}_{i\in I}$, denoted by $\dholos{X}$, is referred to as a $d$-complex structure. 
The pair $(X,\dholos{X})$ is then called a $d$-complex manifold.

From now on, take $(X,\dholos{X})$ be a $d$-complex manifold with $d$-holomorphic atlas $\mathcal{U}=\{(U_i,\phi_i)\}_{i\in I}$.
\begin{definition}\rm{
A smooth $d$-complex vector bundle $E$ on a $d$-complex manifold $(X,\dholos{X})$ is a family of complex trivial vector bundles 
$\{U_i\times\mathbb{C}^k\}_{i\in I}$, varying smoothly, and for 
$(x_j,\xi_j)\in \phi_j(U_j\cap U_i)\times \mathbb{C}^n\overset{\psi_j}\cong E|_{U_j\cap U_i}$, $(x_i,\xi_i)\in \phi_i(U_i\cap U_j)\times\mathbb{C}^n\overset{\psi_i}\cong E|_{U_i\cap U_j}$, the gluing condition is
\begin{align*}
        x_j&=\trans{j}{i}(x_i)\hspace{98pt} \text{   and }\\   
        \xi_j(x_j)&= \left\{\begin{array}{lr}
        \cocycle{j}{i}{E}\circ \phi_i^{-1}(x_i)\xi_i(x_i), & \text{if } \trans{j}{i} \text{ is holomorphic};\vspace{7pt}\\
        \cocycle{j}{i}{E}\circ\phi_i^{-1}(x_i)\overline{\xi}_i(x_i), & \text{if } \trans{j}{i} \text{ is anti-holomorphic}.\\
        \end{array}\right.
 \end{align*}
 
The co-cycle map $\cocycle{j}{i}{E}(x)\in \mathrm{GL}_n(\mathbb{C})$ (respectively, 
$\mathrm{GL}_n(\mathbb{R})$) for $x\in U_i\cap U_j$ (respectively, $x\in\partial(U_i\cap U_j)$) is 
smooth and satisfies the following cocycle condition: 
 \[
        \cocycle{k}{i}{E}(x)= \left\{\begin{array}{lr}
        \cocycle{k}{j}{E}(x)\circ\cocycle{j}{i}{E}(x), & \text{if } \trans{k}{j} \text{ is holomorphic};\vspace{7pt}\\
        \cocycle{k}{j}{E}(x)\circ\conjcocycle{j}{i}{E}(x), & \text{if } \trans{k}{j} \text{ is anti-holomorphic},\\
        \end{array}\right.\quad\text{ for }x\in U_i\cap U_j\cap U_k
      \]
along with
\[
        \cocycle{j}{i}{E}(x)= \left\{\begin{array}{lr}
        {\cocycle{i}{j}{E}}(x)^{-1}, & \text{if } \trans{j}{i} \text{ is holomorphic};\vspace{7pt}\\
        {\conjcocycle{i}{j}{E}}(x)^{-1}, & \text{if } \trans{j}{i} \text{ is anti-holomorphic}.\\
        \end{array}\right.\quad\text{for }x\in U_i\cap U_j
      \]
}
\end{definition}

The \emph{trivial $d$-complex line bundle} $\mathcal{C}$ on $(X,\dholos{X})$ is defined by the transition functions 
$\cocycle{j}{i}{\mathcal{C}} = 1$ with trivilization as $d$-holomorphic atlas $\mathcal{U}$.

A vector bundle $E$ is called a $d$-holomorphic vector bundle if the cocycle map 
$$
\cocycle{j}{i}{E}\circ \phi_i^{-1}:\phi_i(U_i\cap U_j)\rightarrow GL(n,\mathbb{C})~\mbox{(or}, GL(n,\mathbb{R}))
$$ 
is either holomorphic or anti-holomorphic, depending on whether the transition map 
$$
\trans{j}{i}:\phi_i(U_i\cap U_j)\rightarrow \phi_j(U_i\cap U_j)
$$ is holomorphic or anti-holomorphic on each connected component of $\phi_i(U_i\cap U_j)$ for $i,j\in I$.

Let $E$ be a smooth $d$-complex vector bundle on a $d$-complex manifold $(X,\dholos{X})$. A $d$-holomorphic 
structure on $E$ is a maximal family of pairs $\{(U_i,\psi_i)\}_{i\in I}$ such that 
$E|_{U_i}\overset{\psi_i}\cong \phi_i(U_i)\times \mathbb{C}^r$ ($i\in I$) and the transition map $\trans{j}{i}^E=(\trans{j}{i}, \cocycle{j}{i}{E}\circ \phi_i^{-1})$ is either holomorphic or anti-holomorphic on each connected component of $\phi_i(U_i\cap U_j)$.

\begin{remark}\rm{
From now on, for simplicity, we will assume that if intersection $U_i\cap U_j$ is non-empty, it is connected, for 
given two charts $(U_i,\phi_i)$ and $(U_j,\phi_j)$ $(i,j\in I)$ on the $d$-complex manifold $(X,\dholos{X})$. 
}
\end{remark}

\subsection{Almost $d$-complex structure}\label{subsec2.1}
Let $(X,\dholos{X})$ be a $d$-complex manifold of complex dimension $n$, the associated smooth manifold 
will be of real dimension $2n$. Let $T_X$ be the smooth tangent bundle for the smooth manifold $X$. 
The $d$-holomorphic atlas $\mathcal{U}=\{(U_i,\phi_i)\}_{i\in I}$ will also be a smooth atlas on the associated smooth manifold as holomorphic and anti-holomorphic maps are already smooth maps. 
For a chart $\big(U_p,\phi_p=(x_p^1,y_p^1,x_p^2,y_p^2,\dots,x_p^n,y_p^n)\big)$ around a point $p\in X$, the fiber $T_X(x)$ ($x\in U_p$) is a real vector space of 
dimension $2n$, which has a canonical complex structure:
$$
J_{U_p}:\mathbb{R}^{2n}\rightarrow \mathbb{R}^{2n}
$$
given by 
$J_{U_p}\big(x^1_p(x),y^1_p(x),\dots,x^n_p(x),y^n_p(x)\big)=\big(-y^1_p(x),x^1_p(x),\dots,-y^n_p(x),x^n_p(x)\big)$.

Define a line bundle $\mathcal{L}$ on $d$-complex manifold $(X,\dholos{X})$ with same trivializing cover as $d$-holomorphic atlas $\mathcal{U}$ has, with co-cycle map
\[
    \cocycle{j}{i}{\mathcal{L}} = \left\{\begin{array}{lr}
        1, & \text{if } \trans{j}{i} \text{ is holomorphic};\vspace{7pt}\\
        -1, & \text{if } \trans{j}{i} \text{ is anti-holomorphic}
        \end{array}\right.
  \]
then, we have a family of complex structures 
$\{J_{U_i}:T_X(U_i)\rightarrow T_X(U_i)\}_{i\in I}$ with respect to the atlas $\mathcal{U}=\{(U_i,\phi_i)\}_{i\in I}$ 
such that $J_{U_i}^2\equiv -id_{T(U_i)}$, satisfying the compatibility condition:
\begin{equation}\label{almostcompati0}
J_{U_j}\circ \left[D(\trans{j}{i})\right]= \left\{\begin{array}{lr}
        \left[D(\trans{j}{i})\right]\circ J_{U_i}, & \text{if } \cocycle{j}{i}{\mathcal{L}}=1;\vspace{7pt}\\
        -\left[D(\trans{j}{i})\right]\circ J_{U_i}, & \text{if } \cocycle{j}{i}{\mathcal{L}}=-1.\\
        \end{array}\right.\quad\text{here }D(\trans{j}{i})=\text{Jacobian of }\trans{j}{i}
\end{equation}
The unique maximal family containing $\{J_{U_i}\}_{i\in I}$ such that $J_{U_i}^2\equiv -id_{T_X(U_i)}$ and satisfying the condition \eqref{almostcompati0}
is called the canonical almost $d$-complex structure, $\mathcal{J}$ on the associated smooth manifold.

\begin{definition}\rm{
For a given smooth manifold $X$ of real dimension $2n$ (for some $n\in \mathbb{Z}^+$) with smooth atlas $\mathcal{U}=\{\big(U_i,\phi_i=(x_i^1,y_i^1,\dots,x_i^n,y_i^n)\big)\}_{i\in I}$ and orientation line bundle $\mathcal{L}$ with tirivialing cover same as the atlas $\mathcal{U}$ has with co-cycle map $\cocycle{j}{i}{\mathcal{L}}:U_i\cap U_j\rightarrow \{1,-1\}$, a maximal family  of morphisms $\{J_{U_i}:T_X(U_i)\rightarrow T_X(U_i)\}_{i\in I}$ such that $J_{U_i}^2 = -id_{T_X(U_i)}$ satisfying the compatibility condition
\begin{equation}\label{almostcompati}
J_{U_j}\circ \left[D(\trans{j}{i})\right]= \left\{\begin{array}{lr}
        \left[D(\trans{j}{i})\right]\circ J_{U_i}, & \text{if } \cocycle{j}{i}{\mathcal{L}} = 1;\vspace{7pt}\\
        -\left[D(\trans{j}{i})\right]\circ J_{U_i}, & \text{if } \cocycle{j}{i}{\mathcal{L}} = -1,\\
        \end{array}\right.
\end{equation} 
is called an almost $d$-complex structure, will be denoted by $\mathcal{J}$.
}
\end{definition}

From the above discussion, we have the following:
\begin{proposition}
A $d$-complex manifold $X$ induces a canonical almost $d$-complex structure on its underlying smooth 
real manifold.
\end{proposition}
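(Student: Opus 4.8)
The plan is to verify that the family of local complex structures $\{J_{U_i}\}_{i \in I}$ constructed above, after passing to its maximal extension, satisfies exactly the two defining axioms of an almost $d$-complex structure: namely $J_{U_i}^2 = -\mathrm{id}_{T_X(U_i)}$ together with the compatibility condition \eqref{almostcompati}. Since the underlying data---the maximal $d$-holomorphic atlas $\dholos{X}$ together with its associated orientation line bundle $\mathcal{L}$---is intrinsic to the $d$-complex manifold, establishing these two properties will simultaneously yield the canonicity asserted in the statement.

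First I would dispose of the square condition: reading off the explicit formula for $J_{U_p}$, one sees that it acts on each coordinate pair $(x_p^k, y_p^k)$ through the block $\bigl(\begin{smallmatrix} 0 & -1 \\ 1 & 0\end{smallmatrix}\bigr)$, whose square is $-I$, so that $J_{U_i}^2 \equiv -\mathrm{id}$ on every chart. I would also confirm that the sign assignment defining $\mathcal{L}$ is a genuine cocycle valued in $\{1,-1\}$: the composition of two holomorphic (respectively, two anti-holomorphic) transition maps is again holomorphic, while a holomorphic map composed with an anti-holomorphic one is anti-holomorphic, so the associated signs multiply consistently and $\cocycle{j}{i}{\mathcal{L}}$ satisfies the orientation cocycle relation.

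The substance of the argument lies in the compatibility condition \eqref{almostcompati0}, and this is where I would concentrate the work. Writing the transition map $\trans{j}{i}=\phi_j\circ\phi_i^{-1}$ in complex coordinates $z^k = x^k + \sqrt{-1}\,y^k$, I would invoke the Cauchy--Riemann equations: when $\trans{j}{i}$ is holomorphic each $2\times 2$ block of the real Jacobian $D(\trans{j}{i})$ has the form $\bigl(\begin{smallmatrix} a & b \\ -b & a\end{smallmatrix}\bigr)$, and a direct block computation shows that this commutes with $\bigl(\begin{smallmatrix} 0 & -1 \\ 1 & 0\end{smallmatrix}\bigr)$, giving $J_{U_j}\circ D(\trans{j}{i}) = D(\trans{j}{i})\circ J_{U_i}$; when $\trans{j}{i}$ is anti-holomorphic the corresponding block takes the form $\bigl(\begin{smallmatrix} a & b \\ b & -a\end{smallmatrix}\bigr)$, which anti-commutes with the same matrix and yields $J_{U_j}\circ D(\trans{j}{i}) = -\,D(\trans{j}{i})\circ J_{U_i}$. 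These are precisely the two cases of \eqref{almostcompati0}, matched to $\cocycle{j}{i}{\mathcal{L}}=1$ and $\cocycle{j}{i}{\mathcal{L}}=-1$ respectively.

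The main obstacle I anticipate is bookkeeping rather than conceptual: one must keep careful track of the fact that $J_{U_i}$ and $J_{U_j}$ are the standard complex structure expressed in two different coordinate frames, so that the intertwining relation $J_{U_j}\circ D(\trans{j}{i}) = \pm\,D(\trans{j}{i})\circ J_{U_i}$ is read correctly as the assertion that the real differential of a holomorphic (respectively, anti-holomorphic) transition map is complex linear (respectively, complex anti-linear) with respect to these standard structures. Once the compatibility is verified chart by chart, I would conclude by noting that the entire construction depends only on $\dholos{X}$, so the unique maximal family containing $\{J_{U_i}\}_{i\in I}$ furnishes the canonical almost $d$-complex structure $\mathcal{J}$ on the underlying smooth manifold, completing the proof.
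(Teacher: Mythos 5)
Your proposal is correct and follows essentially the same route as the paper, which offers no separate proof beyond the construction in Subsection 2.1: the local structures $J_{U_i}$, the orientation line bundle $\mathcal{L}$ with its $\pm 1$ cocycle, and the compatibility condition \eqref{almostcompati0} whose verification via the Cauchy--Riemann equations the paper leaves implicit. You simply make explicit the $2\times 2$ block computation (commutation with $\bigl(\begin{smallmatrix} 0 & -1 \\ 1 & 0 \end{smallmatrix}\bigr)$ in the holomorphic case, anti-commutation in the anti-holomorphic case) and the canonicity coming from dependence only on the maximal atlas $\dholos{X}$, which is exactly the intended argument.
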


\subsection{Differential forms}
Let $(X,\mathcal{J})$ be an almost $d$-complex manifold, where $X$ is a smooth manifold of even real dimension with orientation line bundle $\mathcal{L}$ having same trivialing cover as the smooth atlas $\mathcal{U}=\{(U_i,\phi_i)\}_{i\in I}$ has on $X$ 
and $T_X$ be the smooth tangent bundle.

For each $i\in I$, the complex structure $J_{U_i}:T_X(U_i)\rightarrow T_X(U_i)$ can be extended to a complex linear endomorphism on $T_X(U_i)\otimes_\mathbb{R}\mathbb{C}$, also will be denoted by $J_{U_i}$. This extension satisfies $J_{U_i}^2\equiv -id_{T_X(U_i)\otimes_\mathbb{R}\mathbb{C}}$ and the family $\{J_{U_i}\}_{i\in I}$ satisfies the compatibility condition \eqref{almostcompati}. Each $J_{U_i}$ has eigenvalues $i$ and 
$-i$ with corresponding eigenspaces denoted by $T^{(1,0)}(U_i)$ and $T^{(0,1)}(U_i)$, 
respectively. That is,
\begin{align*}
T^{(1,0)}(U_i)&=\{Z_{U_i}\in T_X(U_i)\otimes_\mathbb{R}\mathbb{C}:J_{U_i}(Z_{U_i})=iZ_{U_i}\}\quad\text{and}\\
T^{(0,1)}(U_i)&=\{Z_{U_i}\in T_X(U_i)\otimes_\mathbb{R}\mathbb{C}:J_{U_i}(Z_{U_i})=-iZ_{U_i}\}.
\end{align*}
By using the fact that any element $Z_{U_i}\in T_X(U_i)\otimes_\mathbb{R}\mathbb{C}$ can be expressed as 
$Z_{U_i}=Y^1_{U_i}+iY^2_{U_i}$ for some $Y^1_{U_i},Y^2_{U_i}\in T_X(U_i)$, it can be verified that
\begin{align*}
T^{(1,0)}(U_i)&=\{Y_{U_i}-iJ_{U_i}(Y_{U_i}):Y_{U_i}\in T_X(U_i)\}\\
T^{(0,1)}(U_i)&=\{Y_{U_i}+iJ_{U_i}(Y_{U_i}):Y_{U_i}\in T_X(U_i)\}
\end{align*}

From above discussion, we have the following:

\begin{proposition}
For an almost $d$-complex manifold $(X,\mathcal{J})$, an element $Z_{U_i}\in T_X(U_i)\otimes_\mathbb{R}\mathbb{C}$ 
is of type $(1,0)$ (respectively, of type $(0,1)$) that is, $Z_{U_i}\in T^{(1,0)}(U_i)$ (respectively, 
$Z_{U_i}\in T^{(0,1)}(U_i)$) if and only if $Z_{U_i}=Y_{U_i}-iJ_{U_i}(Y_{U_i})$ (respectively, 
$Z_{U_i}=Y_{U_i}+iJ_{U_i}(Y_{U_i})$) for some $Y_{U_i}\in T_X(U_i)$. Furthermore, for a given smooth vector 
field $Y=\{Y_{U_i}\}_{i\in I}\in T_X(X)$, there is a family $\{Z_{U_i}\in T^{(1,0)}(U_i)\}_{i\in I}$ (respectively, 
$\{Z_{U_i}\in T^{(0,1)}(U_i)\}_{i\in I}$) with respect to the smooth atlas $\mathcal{U}=\{(U_i,\phi_i)\}_{i\in I}$ 
on the smooth manifold $X$, satisfying the compatibility condition:
\begin{equation}\label{d-complexform}
Z_{U_j}= \left\{\begin{array}{lr}
        Z_{U_i}, & \text{if } \cocycle{j}{i}{\mathcal{L}}=1 ;\vspace{7pt}\\
        \overline{Z}_{U_i}, & \text{if } \cocycle{j}{i}{\mathcal{L}}=-1.\\
        \end{array}\right.
\end{equation}
\end{proposition}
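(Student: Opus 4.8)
The plan is to establish the two assertions in turn: first the pointwise eigenspace description, and then the gluing behaviour of the family induced by a global vector field.

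For the ``if and only if'' statement, I would take an arbitrary $Z_{U_i} \in T_X(U_i) \otimes_\mathbb{R} \mathbb{C}$ and write $Z_{U_i} = Y^1_{U_i} + i Y^2_{U_i}$ with $Y^1_{U_i}, Y^2_{U_i} \in T_X(U_i)$. Since $J_{U_i}$ is extended $\mathbb{C}$-linearly and is a \emph{real} endomorphism of $T_X(U_i)$, the eigenvalue condition $J_{U_i}(Z_{U_i}) = i Z_{U_i}$ reads $J_{U_i}(Y^1_{U_i}) + i J_{U_i}(Y^2_{U_i}) = -Y^2_{U_i} + i Y^1_{U_i}$; comparing real and imaginary parts in $T_X(U_i)$ forces $Y^2_{U_i} = -J_{U_i}(Y^1_{U_i})$, whence $Z_{U_i} = Y_{U_i} - i J_{U_i}(Y_{U_i})$ with $Y_{U_i} := Y^1_{U_i}$. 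The converse is the one-line verification that $J_{U_i}(Y_{U_i} - i J_{U_i}(Y_{U_i})) = J_{U_i}(Y_{U_i}) + i Y_{U_i} = i(Y_{U_i} - i J_{U_i}(Y_{U_i}))$, using $J_{U_i}^2 = -\mathrm{id}$; the $(0,1)$ statement is identical with the sign of $i$ reversed, landing on the $+i$ eigenvalue.

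For the second part, given a global vector field $Y = \{Y_{U_i}\}_{i \in I} \in T_X(X)$, I would set $Z_{U_i} := Y_{U_i} - i J_{U_i}(Y_{U_i})$, which lies in $T^{(1,0)}(U_i)$ by the first part. To check the compatibility I would recall that the local representatives of a vector field transform by the Jacobian, $Y_{U_j} = D(\trans{j}{i}) Y_{U_i}$ on $U_i \cap U_j$, and then push $J_{U_j}$ through $D(\trans{j}{i})$ using \eqref{almostcompati}. When $\cocycle{j}{i}{\mathcal{L}} = 1$ one has $J_{U_j} \circ D(\trans{j}{i}) = D(\trans{j}{i}) \circ J_{U_i}$, giving $Z_{U_j} = D(\trans{j}{i})(Y_{U_i} - i J_{U_i}(Y_{U_i})) = D(\trans{j}{i}) Z_{U_i}$; when $\cocycle{j}{i}{\mathcal{L}} = -1$ the extra sign yields $Z_{U_j} = D(\trans{j}{i})(Y_{U_i} + i J_{U_i}(Y_{U_i}))$, and since $J_{U_i}$ preserves the real subspace the bracket equals $\overline{Y_{U_i} - i J_{U_i}(Y_{U_i})} = \overline{Z_{U_i}}$, so $Z_{U_j} = D(\trans{j}{i}) \overline{Z_{U_i}}$. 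Modulo the identification by the tangent-bundle cocycle $D(\trans{j}{i})$, these are exactly the two cases of \eqref{d-complexform}.

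The main obstacle, and the step I would treat most carefully, is the bookkeeping between the $\mathbb{C}$-linear extension of $J_{U_i}$ and the antilinear conjugation: the conjugate $\overline{Z_{U_i}}$ surfaces in the anti-holomorphic case precisely because $J_{U_i}$ maps $T_X(U_i)$ into itself, so conjugation only flips the sign of the imaginary part $-J_{U_i}(Y_{U_i})$, while the orientation line bundle $\mathcal{L}$ supplies the corresponding sign through \eqref{almostcompati}. A secondary point I would state explicitly is the reading of \eqref{d-complexform}: the equalities $Z_{U_j} = Z_{U_i}$ and $Z_{U_j} = \overline{Z_{U_i}}$ are understood after applying the gluing Jacobian $D(\trans{j}{i})$, which is the transition cocycle of the complexified tangent bundle, so the two displayed cases record exactly the holomorphic/anti-holomorphic dichotomy.
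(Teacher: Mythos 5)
Your proposal is correct and takes essentially the same route as the paper: the paper's entire proof is the one-line observation that \eqref{d-complexform} follows from the compatibility condition \eqref{almostcompati}, and your argument is exactly that deduction written out in full, with the eigenspace characterization you prove first being the fact the paper already sketches in the discussion preceding the proposition. Your explicit remark that the equalities in \eqref{d-complexform} are to be read modulo the tangent-bundle cocycle $D(\trans{j}{i})$ usefully makes precise a convention the paper leaves implicit.
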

\begin{proof}
The condition \eqref{d-complexform} follows from the compatibility condition \eqref{almostcompati}.
\end{proof}

The maximal family containing $\{Z_{U_i}\in T^{(1,0)}(U_i)\}_{i\in I}$ (respectively, $\{Z_{U_i}\in T^{(0,1)}(U_i)\}_{i\in I}$) is called a $d$-complex vector field of type $(1,0)$ (respectively, $(0,1)$) on the almost $d$-complex manifold $(X,\mathcal{J})$. The collection of smooth $d$-complex vector fields of type $(1,0)$ (respectively, $(0,1)$) on the 
almost $d$-complex manifold $(X,\mathcal{J})$ will be written as $T^{(1,0)}(X)$ (respectively, $T^{(0,1)}(X)$). 
An element of the space $\wedge^pT^{(1,0)}(X)\otimes \wedge^q T^{(0,1)}(X)$ is called a $d$-complex smooth 
vector field of type $(p,q)$. 

The direct sum $T^{(1,0)}(X)\oplus T^{(0,1)}(X)$ is the space of 
smooth $d$-complex vector $1$-fields on the almost $d$-complex manifold $(X,\mathcal{J})$, and is denoted by 
$T^\mathcal{C}_X(X)$.

Using the fact that a complex structure on a real vector space induces a complex structure on the dual vector space, we have an induced complex structure on $T^\star_X(U_i)$ which further induces a complex linear endomorphism on $T^\star_X(U_i) \otimes_\mathbb{R} \mathbb{C}$. 
Following a similar line of arguments as above, we have the following:

\begin{proposition}
For an almost $d$-complex manifold $(X,\mathcal{J})$, an element $Z_{U_i}\in T^\star_X(U_i)\otimes_\mathbb{R}\mathbb{C}$ is of type $(1,0)$ (respectively, 
$(0,1)$) that is, $Z_{U_i}\in {T^\star}^{(1,0)}(U_i)$ (respectively, $Z_{U_i}\in {T^\star}^{(0,1)}(U_i)$) 
if and only if $Z_{U_i}=Y_{U_i}-iJ_{U_i}(Y_{U_i})$ (respectively, $Z_{U_i}=Y_{U_i}+iJ_{U_i}(Y_{U_i})$) for 
some $Y_{U_i}\in T^\star_X(U_i)$ ($i\in I$). Furthermore, for a given smooth 1-form $Y=\{Y_{U_i}\}_{i\in I}\in T^*_X(X)$, there is 
a family $\{Z_{U_i}\in {T^\star}^{(1,0)}(U_i)\}_{i\in I}$ (respectively, 
$\{Z_{U_i}\in {T^\star}^{(0,1)}(U_i)\}_{i\in I}$) with respect to the smooth atlas 
$\mathcal{U}=\{(U_i,\phi_i)\}_{i\in I}$ on the smooth manifold $X$, satisfying the compatibility condition
\begin{equation}\label{d-complexform1}
Z_{U_j}= \left\{\begin{array}{lr}
        Z_{U_i}, & \text{if } \cocycle{j}{i}{\mathcal{L}}=1;\vspace{7pt}\\
        \overline{Z}_{U_i}, & \text{if } \cocycle{j}{i}{\mathcal{L}}=-1.\\
        \end{array}\right.
\end{equation}
\end{proposition}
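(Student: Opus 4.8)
The plan is to mirror the proof of the preceding proposition for the tangent bundle, transporting everything to the dual space via the induced complex structure on $T^\star_X(U_i)$. For the local "if and only if" I would work chartwise with the complex-linear extension of $J_{U_i}$ to $T^\star_X(U_i)\otimes_\R\C$. Since $J_{U_i}^2=-\mathrm{id}$, its eigenvalues are $\pm i$ and the complexified dual splits as ${T^\star}^{(1,0)}(U_i)\oplus{T^\star}^{(0,1)}(U_i)$. A one-line computation using $J_{U_i}^2=-\mathrm{id}$ gives $J_{U_i}\big(Y_{U_i}-iJ_{U_i}(Y_{U_i})\big)=i\big(Y_{U_i}-iJ_{U_i}(Y_{U_i})\big)$ and $J_{U_i}\big(Y_{U_i}+iJ_{U_i}(Y_{U_i})\big)=-i\big(Y_{U_i}+iJ_{U_i}(Y_{U_i})\big)$, so these covectors lie in ${T^\star}^{(1,0)}(U_i)$ and ${T^\star}^{(0,1)}(U_i)$ respectively. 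Conversely, writing $Z_{U_i}\in{T^\star}^{(1,0)}(U_i)$ as $A+iB$ with $A,B$ real and imposing $J_{U_i}Z_{U_i}=iZ_{U_i}$ forces $B=-J_{U_i}(A)$, whence $Z_{U_i}=A-iJ_{U_i}(A)$ has the asserted form (and symmetrically for type $(0,1)$).

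For the global statement, given a smooth real $1$-form $Y=\{Y_{U_i}\}_{i\in I}$ I would set $Z_{U_i}:=Y_{U_i}-iJ_{U_i}(Y_{U_i})$ (respectively $Y_{U_i}+iJ_{U_i}(Y_{U_i})$) and verify the gluing \eqref{d-complexform1}. The essential input is that the induced complex structures on the cotangent spaces again satisfy the compatibility relation \eqref{almostcompati}, now with the cotangent transition maps (the inverse transposes of the Jacobians $D(\trans{j}{i})$) in place of $D(\trans{j}{i})$. Granting this, the computation is identical to the tangent case: on overlaps where $\cocycle{j}{i}{\mathcal{L}}=1$ the two local expressions coincide, while where $\cocycle{j}{i}{\mathcal{L}}=-1$ the sign in \eqref{almostcompati} turns $Y_{U_i}-iJ_{U_i}(Y_{U_i})$ into the complex conjugate of the $U_i$-expression, which is exactly the content of \eqref{d-complexform1}.

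The one point that genuinely needs checking, and which I regard as the main (if modest) obstacle, is the compatibility of the induced dual complex structures: one must confirm that the family acting on the complexified cotangent spaces, together with the cotangent transition maps, again satisfies \eqref{almostcompati} with the \emph{same} sign $\cocycle{j}{i}{\mathcal{L}}$. Dualizing the identity $J_{U_j}\circ[D(\trans{j}{i})]=\cocycle{j}{i}{\mathcal{L}}\,[D(\trans{j}{i})]\circ J_{U_i}$ reverses the order of composition and replaces each map by its transpose (and the Jacobian by its inverse transpose), but the scalar $\cocycle{j}{i}{\mathcal{L}}\in\{1,-1\}$ is central and equals its own inverse, so it survives unchanged and the dual relation carries the identical sign. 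Once this is secured, both the eigenspace characterization and the gluing \eqref{d-complexform1} reduce verbatim to the already-established tangent statement, completing the proof.
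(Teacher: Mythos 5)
Your proposal is correct and follows essentially the same route as the paper: the paper's proof is the one-line observation that the gluing \eqref{d-complexform1} follows from the compatibility condition \eqref{almostcompati}, after noting (as you do) that the complex structure dualizes to the cotangent spaces and that the eigenspace characterization $Z_{U_i}=Y_{U_i}\mp iJ_{U_i}(Y_{U_i})$ carries over verbatim from the tangent case. Your explicit check that dualizing $J_{U_j}\circ[D(\trans{j}{i})]=\cocycle{j}{i}{\mathcal{L}}\,[D(\trans{j}{i})]\circ J_{U_i}$ preserves the sign $\cocycle{j}{i}{\mathcal{L}}$ simply fills in the detail the paper leaves implicit.
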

\begin{proof}
The condition \eqref{d-complexform1} follows from the compatibility condition \eqref{almostcompati}.
\end{proof}

The maximal family containing $\{Z_{U_i}\in {T^\star}^{(1,0)}(U_i)\}_{i\in I}$ (respectively, $\{Z_{U_i}\in {T^\star}^{(0,1)}(U_i)\}_{i\in I}$) is called a $d$-complex vector form of type $(1,0)$ (respectively, of type $(0,1)$) on the almost $d$-complex manifold $(X,\mathcal{J})$. The collection of $d$-complex smooth forms of type $(1,0)$ (respectively, $(0,1)$) on the almost $d$-complex manifold 
$(X,\mathcal{J})$ will be written as ${T^\star}^{(1,0)}(X)$ (respectively, ${T^\star}^{(0,1)}(X)$). An 
element of the space $\wedge^p {T^\star}^{(1,0)}(X) \otimes \wedge^q {T^\star}^{(0,1)}(X)$ is called a 
$d$-complex smooth form of type $(p,q)$.

The direct sum ${T^\star}^{(1,0)}(X)\oplus{T^\star}^{(0,1)}(X)$ is the space of smooth $d$-complex 
vector $1$-forms on the almost $d$-complex manifold $(X,\mathcal{J})$, will be written as ${T^\star}^\mathcal{C}_X(X)$.

Each $d$-complex smooth $k$-form $\alpha=\{\alpha_{U_i}\}_{i\in I}\in \wedge^k{T^\star}^\mathcal{C}_X(X)$ 
decomposes as follows
$$
\alpha=\displaystyle\sum_{\substack{ p+q=k\\ p,q=1}}^k \alpha^{(p,q)}
$$
where $\alpha^{(p,q)}\in \wedge^p {T^\star}^{(1,0)}(X) \otimes \wedge^q {T^\star}^{(0,1)}(X)$.

From now on, the space of $d$-complex smooth $k$-forms (respectively, $d$-complex smooth $(p,q)$-forms) 
on the almost $d$-complex manifold $(X,\mathcal{J})$ will be denoted by $\mathcal{A}^k_\mathcal{C}(X)$ (respectively, 
$\mathcal{A}^{(p,q)}(X)$). We denote by $\mathcal{A}^k(E)(X)$ the space of smooth $k$-form on the smooth 
manifold $X$ taking values in a bundle $E$.

\begin{remark}\label{exterior-dc}\rm{
For an almost $d$-complex manifold $(X,\dholos{X})$, the line bundle $\mathcal{L}$ is a flat bundle and using the fact that a flat bundle has a canonical flat connection, we have extended de-Rham operator $d_\mathcal{L}:\mathcal{A}^0(\mathcal{L})(X)\rightarrow \mathcal{A}^1(\mathcal{L})(X)$. Also, there is a bijective corrrespondence between global sections of trivial $d$-complex line bundle $\mathcal{C}$ (see \cite{SW1}) and the bundle $\mathbb{R}\oplus \mathcal{L}$, define $d_\mathcal{C}(\omega)=\big(d(\omega^1),d_\mathcal{L}(\omega^2)\big)$ for a global section $\omega=\{\omega_{U_i}\}_{i\in I}\in \mathcal{C}(X)$. Furthermore, for a smooth $d$-complex function $f=\{f_{U_i}\}_{i\in I}\in \mathcal{C}(X)$, $d_\mathcal{C}(f)$ can be expressed as a family $\{d(f_{U_i})\}_{i\in I}$ satisfying
\[
    d(f_{U_j}) = \left\{\begin{array}{lr}
        d(f_{U_i}), & \text{if } \cocycle{j}{i}{\mathcal{L}}=1;\vspace{7pt}\\
        \overline{d(f_{U_i})}, & \text{if } \cocycle{j}{i}{\mathcal{L}}=-1
        \end{array}\right.
  \]
}
\end{remark}

\begin{proposition}
Let $(X,\mathcal{J})$ be an almost $d$-complex manifold. Then, we have
$$
d_\mathcal{C}(\mathcal{A}^{(p,q)}(X))\subset \mathcal{A}^{(p+2,q-1)}(X)\oplus \mathcal{A}^{(p+1,q)}(X)\oplus 
\mathcal{A}^{(p,q+1)}(X)\oplus \mathcal{A}^{(p-1,q+2)}(X).
$$
\end{proposition}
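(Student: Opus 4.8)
The plan is to reduce the statement to a purely local, classical computation on a single chart and then exploit that $d_\mathcal{C}$ is a graded derivation. Since the bigrading $\mathcal{A}^k_\mathcal{C}(X)=\bigoplus_{p+q=k}\mathcal{A}^{(p,q)}(X)$ is globally well defined and $d_\mathcal{C}$ restricts on each $U_i$ to the ordinary exterior derivative $d$ acting on the local representative $\alpha_{U_i}$ (see Remark \ref{exterior-dc}), it suffices to prove the type-shift statement on a fixed chart $(U_i,\phi_i)$ equipped with the genuine almost complex structure $J_{U_i}$, where $\mathcal{A}^{(p,q)}$ denotes the usual space of type-$(p,q)$ forms. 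The compatibility conditions \eqref{almostcompati}, \eqref{d-complexform}, and \eqref{d-complexform1} guarantee that the resulting local containments patch to the asserted global one, because $d$ commutes with complex conjugation and with pullback by the transition maps.

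First I would settle the two base cases. For a smooth $d$-complex function $f\in\mathcal{A}^{(0,0)}(X)$, the form $d_\mathcal{C}f$ is a $1$-form, hence decomposes as a sum of a $(1,0)$- and a $(0,1)$-part; this is precisely the case $p=q=0$ of the statement, the two outer summands being zero. Next, for a local $1$-form of type $(1,0)$ or $(0,1)$, the form $d$ applied to it is a $2$-form, and on an almost $d$-complex manifold every $2$-form lies in $\mathcal{A}^{(2,0)}\oplus\mathcal{A}^{(1,1)}\oplus\mathcal{A}^{(0,2)}$, since $\wedge^2{T^\star}^\mathcal{C}_X=\wedge^2{T^\star}^{(1,0)}\oplus({T^\star}^{(1,0)}\otimes{T^\star}^{(0,1)})\oplus\wedge^2{T^\star}^{(0,1)}$. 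Reading off bidegrees, $d$ shifts the type of a $(1,0)$-form by one of $(+1,0),(0,+1),(-1,+2)$ and that of a $(0,1)$-form by one of $(+2,-1),(+1,0),(0,+1)$. Thus the set of admissible bidegree shifts of $d$ on generators is exactly $S=\{(+2,-1),(+1,0),(0,+1),(-1,+2)\}$. I would emphasize that no computation of the Nijenhuis tensor is needed: the two ``extra'' shifts $(+2,-1)$ and $(-1,+2)$ appear simply because $d$ of a $1$-form is an a priori arbitrary $2$-form.

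Finally I would run the induction on total degree using the Leibniz rule. Locally a form $\alpha\in\mathcal{A}^{(p,q)}$ is a finite sum of terms $f\,\theta_1\wedge\cdots\wedge\theta_p\wedge\bar\eta_1\wedge\cdots\wedge\bar\eta_q$, where the $\theta$'s are $(1,0)$-forms and the $\bar\eta$'s are $(0,1)$-forms drawn from a local coframe. Applying $d$ and the graded Leibniz rule, each resulting summand is, up to sign, a wedge product in which $d$ has been applied to exactly one factor while the others are left untouched; by the base cases that single factor is shifted by an element of $S$, and the remaining factors keep their bidegrees. Since wedging with the untouched factors merely translates the bidegree without altering the shift, every summand of $d\alpha$ has bidegree $(p,q)$ plus an element of $S$, i.e. lies in $\mathcal{A}^{(p+2,q-1)}\oplus\mathcal{A}^{(p+1,q)}\oplus\mathcal{A}^{(p,q+1)}\oplus\mathcal{A}^{(p-1,q+2)}$, which is the claim. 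The one point requiring care, and the main obstacle, is the bookkeeping that these local containments are compatible with the $\mathcal{L}$-twisted gluing on overlaps where $\cocycle{j}{i}{\mathcal{L}}=-1$; this is handled by the fact that complex conjugation intertwines the bigradings on $U_i$ and $U_j$ in the manner dictated by \eqref{d-complexform1} and commutes with $d$, so that the four-type decomposition is respected globally.
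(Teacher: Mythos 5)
Your proposal is correct and follows essentially the same route as the paper's proof: both reduce to the base cases $d_\mathcal{C}(\mathcal{A}^{(0,0)})\subset\mathcal{A}^{(1,0)}\oplus\mathcal{A}^{(0,1)}$ and $d_\mathcal{C}(\mathcal{A}^{(1,0)}),\,d_\mathcal{C}(\mathcal{A}^{(0,1)})\subset\mathcal{A}^{(2,0)}\oplus\mathcal{A}^{(1,1)}\oplus\mathcal{A}^{(0,2)}$, and then apply the graded Leibniz rule to (sums of) decomposable $(p,q)$-forms to read off the four admissible bidegree shifts. Your version is in fact slightly more careful than the paper's, since you correctly treat $\alpha$ as a finite sum of decomposables rather than a single wedge product and you make explicit the conjugation-compatibility of the bigrading on overlaps with $\cocycle{j}{i}{\mathcal{L}}=-1$, which the paper leaves implicit.
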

\begin{proof}
Any element $\alpha\in \mathcal{A}^{(p,q)}(X)$ can be written as 
$$\alpha=\alpha_0\wedge\alpha_1\wedge\dots\wedge\alpha_p\wedge\alpha_1'\wedge\dots\wedge\alpha_q',
$$
where $\alpha_0\in \mathcal{A}^{(0,0)}(X)$, $\alpha_i\in \mathcal{A}^{(1,0)}(X)$ $(1\leq i \leq p)$ 
and $\alpha_j'\in \mathcal{A}^{(0,1)}(X)$ $(1\leq j\leq q)$. Using the property of de-Rham operator, 
we have
\begin{align*}
d_\mathcal{C}(\mathcal{A}^{(0,0)}(X))&\subset \mathcal{A}^{(1,0)}(X)\oplus \mathcal{A}^{(0,1)}(X)\\
d_\mathcal{C}(\mathcal{A}^{(1,0)}(X))&\subset \mathcal{A}^{(2,0)}(X)\oplus \mathcal{A}^{(1,1)}(X)\oplus\mathcal{A}^{(0,2)}(X)\quad\text{and}\\
d_\mathcal{C}(\mathcal{A}^{(0,1)}(X))&\subset \mathcal{A}^{(2,0)}(X)\oplus\mathcal{A}^{(1,1)}(X)\oplus\mathcal{A}^{(0,2)}(X).
\end{align*}
Furthermore, we have
\begin{align*}
d_\mathcal{C}(\alpha)&=d_\mathcal{C}(\alpha_0)\wedge\alpha_1\wedge\dots\wedge\alpha_p\wedge\alpha_1'\wedge\dots\wedge\alpha_q'\\
&\qquad+\alpha_0\wedge d_\mathcal{C}(\alpha_1)\wedge\dots\wedge\alpha_p\wedge\alpha_1'\wedge\dots\wedge\alpha_q'\\
&\qquad\qquad\qquad\qquad \vdots\\
&\qquad\qquad+(-1)^{p+q-1}\alpha_0\wedge\alpha_1\wedge\dots\wedge\alpha_p\wedge\alpha_1'\wedge\dots\wedge d_\mathcal{C}(\alpha_q'),\quad\text{i.e., }\\
d_\mathcal{C}(\alpha)&\in \mathcal{A}^{(p+2,q-1)}(X)\oplus \mathcal{A}^{(p+1,q)}(X)\oplus \mathcal{A}^{(p,q+1)}(X)\oplus \mathcal{A}^{(p-1,q+2)}(X).
\end{align*}
Hence, we have the required inclusion
$$
d_\mathcal{C}(\mathcal{A}^{(p,q)}(X))\subset \mathcal{A}^{(p+2,q-1)}(X)\oplus \mathcal{A}^{(p+1,q)}(X)\oplus \mathcal{A}^{(p,q+1)}(X)\oplus \mathcal{A}^{(p-1,q+2)}(X).
$$
\end{proof}

\begin{definition}\rm{
An almost $d$-complex structure $\mathcal{J}$ on a smooth manifold $X$ of even dimension is said to be integrable 
if and only if
\begin{align*}
d_\mathcal{C}(\mathcal{A}^{(1,0)}(X))&\subset \mathcal{A}^{(2,0)}(X)\oplus \mathcal{A}^{(1,1)}(X) \text{ along with}\\
d_\mathcal{C}(\mathcal{A}^{(0,1)}(X))&\subset \mathcal{A}^{(1,1)}(X)\oplus \mathcal{A}^{(0,2)}(X).
\end{align*}
}
\end{definition}

Let $\mathcal{J}$ be an integrable almost $d$-complex structure on a smooth manifold $X$. We define 
$\partial := \pi^{(p+1,q)}\circ d_\mathcal{C}:\mathcal{A}^{(p,q)}(X)\rightarrow \mathcal{A}^{(p+1,q)}(X)$ and 
$\overline{\partial} := \pi^{(p,q+1)}\circ d_\mathcal{C}:\mathcal{A}^{(p,q)}(X)\rightarrow \mathcal{A}^{(p,q+1)}(X)$. 
Then, we have
$$
d_\mathcal{C}\omega=\partial \omega+\overline{\partial}\omega,
$$
for $\omega\in \mathcal{A}^{(p,q)}(X)$.

Considering the associated smooth manifold $X$ for a $d$-complex manifold $(X,\dholos{X})$. We have a sheaf of smooth $i$-forms ($U\rightarrow \mathcal{A}^i(U)$) on the smooth manifold $X$ and the corresponding de-Rham chain complex
$$
0\rightarrow \mathcal{A}^0(X)\xrightarrow{d}\mathcal{A}^1(X)\xrightarrow{d}\dots .
$$
The de-Rham cohomology of the associated smooth manifold $X$ is defined as
\begin{equation}
H^q_{dR}(X)=\frac{Ker(d:\mathcal{A}^q(X)\rightarrow\mathcal{A}^{q+1}(X))}{Im(d:\mathcal{A}^{q-1}(X)\rightarrow \mathcal{A}^q(X))}.
\end{equation}
Also, after taking the tensor product of chain complex of smooth forms on the manifold with $d$-complex line bundle $\mathcal{C}\rightarrow X$ (see \cite{SW1}), we have a chain complex of $d$-complex smooth forms
\begin{equation}\label{derhamcomp}
0\rightarrow \mathcal{A}^0_\mathcal{C}(X)\xrightarrow{d_\mathcal{C}}\mathcal{A}^1_\mathcal{C}(X)\xrightarrow{d_\mathcal{C}}\dots .
\end{equation}
and the associated de-Rham cohomology is defined as
\begin{equation}
H^q_{dR}(X,\mathcal{C})=\frac{Ker(d_\mathcal{C}:\mathcal{A}_\mathcal{C}^q(X)\rightarrow\mathcal{A}_\mathcal{C}^{q+1}(X))}{Im(d_\mathcal{C}:\mathcal{A}_\mathcal{C}^{q-1}(X)\rightarrow \mathcal{A}_\mathcal{C}^q(X))}.
\end{equation}
Using the canonical almost $d$-complex structure on the smooth manifold $X$ associated to a
$d$-complex manifold $(X,\dholos{X})$, we have the decompositions
\begin{align*}
\mathcal{A}^1_\mathcal{C}(X) &=\mathcal{A}^{(1,0)}(X)\oplus \mathcal{A}^{(0,1)}(X)\text{ and}\\
\mathcal{A}_\mathcal{C}^k(X)&=\bigoplus_{\substack{p+q=k \\ 1\leq p,q\leq k}}\mathcal{A}^{(p,q)}(X),
\end{align*}
where $\mathcal{A}^{(p,q)}(X)=\wedge^p{T^\star}^{(1,0)}(X)\otimes \wedge^q{T^\star}^{(0,1)}(X)$ and 
$\mathcal{A}_\mathcal{C}^k=\wedge^k{T^\star}^\mathcal{C}_X(X)$. We have the Dolbeault double complex
\[\begin{tikzcd}[ampersand replacement=\&]
	\vdots \& \vdots \& \vdots \\
	{\mathcal{A}^{(0,1)}(X)} \& {\mathcal{A}^{(1,1)}(X)} \& {\mathcal{A}^{(2,1)}(X)} \\
	{\mathcal{A}^{(0,0)}(X)} \& {\mathcal{A}^{(1,0)}(X)} \& {\mathcal{A}^{(2,0)}(X)}
	\arrow["{\overline{\partial}}", from=2-1, to=1-1]
	\arrow["{\overline{\partial}}", from=2-2, to=1-2]
	\arrow["{\overline{\partial}}", from=2-3, to=1-3]
	\arrow["{\overline{\partial}}", from=3-1, to=2-1]
	\arrow["{\overline{\partial}}", from=3-2, to=2-2]
	\arrow["{\overline{\partial}}", from=3-3, to=2-3]
	\arrow["\partial", from=2-1, to=2-2]
	\arrow["\partial", from=2-2, to=2-3]
	\arrow["\partial", from=3-2, to=3-3]
	\arrow["\partial", from=3-1, to=3-2].
\end{tikzcd}\]
along with Dolbeault chain complex
\begin{equation}\label{dolbeaultcomp}
0\rightarrow \mathcal{A}^{(p,0)}(X)\xrightarrow{\overline{\partial}}\mathcal{A}^{(p,1)}(X)\xrightarrow{\overline{\partial}}\mathcal{A}^{(p,2)}(X)\rightarrow\dots\quad(\text{for }0\leq p\leq n).
\end{equation}
The Dolbeault cohomology of a $d$-complex manifold $(X,\dholos{X})$ is defined by
\begin{equation}\label{Dolbcohomology}
H^{(p,q)}_{dB}(X)=\frac{Ker\big(\overline{\partial}:\mathcal{A}^{(p,q)}(X)\rightarrow\mathcal{A}^{(p,q+1)}(X)\big)}{Im\big(\overline{\partial}:\mathcal{A}^{(p,q-1)}(X)\rightarrow\mathcal{A}^{(p,q)}(X)\big)}.
\end{equation}

\subsection{$d$-Hermitian manifolds}
Let $(X,\dholos{X})$ be a $d$-complex manifold of dimension $n$, with $d$-holomorphic atlas 
$\mathcal{U}=\big\{\big(U_l,\phi_l=(z_l^1,z^2_l,\dots,z^n_l)\big)\big\}_{l\in I}$. 

\begin{definition}\rm{
A $d$-Hermitian metric on the $d$-complex manifold, is a Riemannian metric $h$, respecting the canonical almost $d$-complex structure $\mathcal{J}$ i.e.
$$
h(Y^1,Y^2)=h\big(\mathcal{J}(Y^1),\mathcal{J}(Y^2)\big)
$$ 
for $Y^1,Y^2\in T(X)$.
}
\end{definition}
An almost $d$-complex manifold with a Riemannian metric $g$ always has a Hermitian metric 
$h$, given by
$$
h(Y^1,Y^2)=g(Y^1,Y^2)+g(J(Y^1),J(Y^2))\quad(\text{for real vector fields }Y^1,Y^2)
$$

\begin{definition}\rm{
A $d$-complex manifold $(X,\dholos{X})$ with $d$-Hermitian metric $h$, is called a $d$-Hermitian manifold.
}
\end{definition}
The $d$-Hermitian metric $h$ on the $d$-complex manifold $(X,\dholos{X})$ can be expressed as a family of 
$C^\infty$ tensors $\left\lbrace\displaystyle\sum_{i,j=1}^nh^{U_l}_{ij}dz^i_l\otimes d\overline{z}^j_l\right\rbrace_{l\in I}$ with respect to the atlas $\mathcal{U}$, satisfying the compatibility condition 
\begin{equation}\label{hermicondi}
h^{U_{l'}}_{i'j'} = \left\{\begin{array}{lr}
        \displaystyle\sum_{ij}h^{U_l}_{ij}\frac{\partial z^i_l}{\partial z^{i'}_{l'}}\frac{\partial \overline{z}^j_l}{\partial \overline{z}^{j'}_{l'}}, & \text{if } \trans{l'}{l}\text{ is holomorphic};\vspace{7pt}\\
        \displaystyle\sum_{ij}\overline{h}^{U_l}_{ij}\frac{\partial \overline{z}^i_l}{\partial z^{i'}_{l'}}\frac{\partial z^j_l}{\partial \overline{z}^{j'}_{l'}}, & \text{if } \trans{l'}{l}\text{ is anti-holomorphic}.
        \end{array}\right.
\end{equation}
Here $h^{U_l}_{ij}\in C^\infty(U_l)$ $(l\in I)$ are real valued smooth functions satisfying
\begin{enumerate}
\item $h^{U_l}_{ij}\geq 0$\quad(positive definite)\vspace{7pt}
\item $\overline{h^{U_l}_{ij}}=h^{U_l}_{ji}$\quad(Hermitian symmetric)
\end{enumerate}

\begin{definition}\rm{
A $d$-Hermitian metric on a smooth $d$-complex bundle $E$, can be expressed as a family $\big\{\langle,\rangle_{U_i}:E(U_i)\times E(U_i)\rightarrow \mathbb{C}\big\}_{i\in I}$ of 
Hermitian inner products with respect to the atlas $\mathcal{U}=\{(U_i,\phi_i)\}_{i\in I}$, varying smoothly and satisfying the compatibility condition
\[
\langle,\rangle_{U_j} = \left\{\begin{array}{lr}
        \langle,\rangle_{U_i}, & \text{if } \trans{l'}{l}\text{ is holomorphic};\vspace{7pt}\\
        \overline{\langle,\rangle}_{U_i}, & \text{if } \trans{l'}{l}\text{ is anti-holomorphic}.
        \end{array}\right.
\]
}
\end{definition}

\begin{theorem}
Every smooth $d$-complex vector bundle on $d$-complex manifold $(X,\dholos{X})$ admits a $d$-Hermitian metric.
\end{theorem}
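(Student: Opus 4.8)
The plan is to imitate the classical partition-of-unity construction of a Hermitian metric on a complex vector bundle, paying careful attention to the conjugations forced by the anti-holomorphic transition maps. First I would pass to a locally finite refinement of the trivializing cover $\mathcal{U}^E=\{(U_i,\psi_i)\}_{i\in I}$ (the manifold $X$ being paracompact, this is harmless) and choose a smooth partition of unity $\{\rho_i\}_{i\in I}$ subordinate to it, with $\mathrm{supp}(\rho_i)\subset U_i$. On each chart the trivialization $\psi_i:E|_{U_i}\xrightarrow{\sim}\phi_i(U_i)\times\mathbb{C}^k$ lets me pull back the standard Hermitian inner product $\langle\xi,\eta\rangle_{\mathrm{std}}=\sum_{a}\overline{\xi^a}\,\eta^a$ on $\mathbb{C}^k$ to a local Hermitian metric $h_i$ on $E|_{U_i}$; by construction each $h_i$ varies smoothly, is Hermitian symmetric, and is positive definite.

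Next I would assemble these local models into a single family. In each chart $U_l$ I set
$$\langle\,,\,\rangle_{U_l}=\sum_{\alpha}\rho_\alpha\,(h_\alpha)_l,$$
where $(h_\alpha)_l$ denotes the local metric $h_\alpha$ transported from the $\alpha$-th to the $l$-th trivialization through the gluing data of $E$, the transport incorporating a complex conjugation precisely when $\trans{l}{\alpha}$ is anti-holomorphic. Since each $\rho_\alpha$ is supported in $U_\alpha$, every term extends by zero and the sum is locally finite, so $\langle\,,\,\rangle_{U_l}$ is a well-defined smooth section. Hermitian symmetry passes to the sum termwise, and positive definiteness follows because each $(h_\alpha)_l$ is positive semidefinite while at every point at least one $\rho_\alpha$ is strictly positive with $h_\alpha$ positive definite there; hence the combination is positive definite.

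The crux is the compatibility condition. Here the key algebraic input is that the standard inner product is conjugation invariant, $\langle\overline{\xi},\overline{\eta}\rangle_{\mathrm{std}}=\overline{\langle\xi,\eta\rangle_{\mathrm{std}}}$, which is exactly the symmetry needed to match the conjugation in the anti-holomorphic gluing $\xi_j=\cocycle{j}{i}{E}\circ\phi_i^{-1}\,\overline{\xi}_i$. Combining this with the cocycle identities for $\cocycle{j}{i}{E}$ shows that over $U_i\cap U_j$ each transported building block satisfies $(h_\alpha)_j=(h_\alpha)_i$ when $\trans{j}{i}$ is holomorphic and $(h_\alpha)_j=\overline{(h_\alpha)_i}$ when it is anti-holomorphic, and crucially that this dichotomy is governed by $\cocycle{j}{i}{\mathcal{L}}$ independently of $\alpha$. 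Since the weights $\rho_\alpha$ are real and chart independent, summing yields $\langle\,,\,\rangle_{U_j}=\langle\,,\,\rangle_{U_i}$ in the holomorphic case and $\langle\,,\,\rangle_{U_j}=\overline{\langle\,,\,\rangle}_{U_i}$ in the anti-holomorphic case, which is precisely the required compatibility. I expect this conjugation bookkeeping through the anti-holomorphic transitions---rather than any analytic difficulty---to be the main obstacle, since smoothness and positivity are automatic consequences of the partition-of-unity sum.
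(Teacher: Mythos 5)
Your proposal is correct and follows essentially the same route as the paper: a locally finite trivializing cover, local standard Hermitian metrics, and a partition-of-unity sum in which each term pulled from a chart related by an anti-holomorphic transition is conjugated. Your formulation $\langle\,,\,\rangle_{U_l}=\sum_{\alpha}\rho_\alpha\,(h_\alpha)_l$ with conjugation built into the transport is just a repackaging of the paper's split of the sum into holomorphic-index and anti-holomorphic-index parts (with $\overline{\tilde{h}^{U_{j'}}}$ on the latter), and your observation that the conjugation dichotomy is governed by $\cocycle{j}{i}{\mathcal{L}}$ uniformly in $\alpha$ is exactly the check needed to verify the compatibility condition \eqref{hermicondi}.
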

\begin{proof}
Let $\mathcal{U}=\{(U_i,\phi_i)\}_{i\in I}$ be a $d$-holomorphic atlas for a $d$-complex manifold $(X,\dholos{X})$  
such that the covering $\{U_i\}_{i\in I}$ of $X$ is locally finite and $E|_{U_i}$ is trivial complex vector bundle. 
Let $\{\rho_i\}_{i\in I}$ be a smooth partition of unity subordinate to the covering $\{U_i\}_{i\in I}$ and define a Hermitian metric $\tilde{h}^{U_i}$ on $E|_{U_i}$ by setting,
$$
\tilde{h}^{U_i}(\xi_{U_i},\eta_{U_i})(x)=\overline{^t\eta_{U_i}}(x).\xi_{U_i}(x),
$$
for $\xi_{U_i},\eta_{U_i}\in E(U_i)$. Here the multiplication is between co-ordinate vector associated to 
$\xi_{U_i}$ and $\eta_{U_i}$ with respect to a fixed frame of $E_{U_i}$.

For two global section $\xi=\{\xi_{U_i}\}_{i\in I}$ and $\eta=\{\eta_{U_i}\}_{i\in I}$ of the bundle $E$ along with considering $\trans{i}{j}$ (respectively, $\trans{i}{j'}$) is holomorphic (respectively, anti-holomorphic) for $j\in J\subset I\setminus \{i\}$ (respectively, $j'\in J'\subset I\setminus \{i\}$), we define
\begin{align*}
\langle \xi_{U_i},\eta_{U_i} \rangle_{U_i} 
& = h^{U_i}(\xi_{U_i},\eta_{U_i})\\
& := \rho_i\tilde{h}^{U_i}(\xi_{U_i},\eta_{U_i})+\sum_{j\in J\subset I\setminus \{i\}} 
\rho_j\tilde{h}^{U_j}(\xi_{U_j},\eta_{U_j})+\sum_{j'\in J'\subset I\setminus \{i\}} 
\rho_{j'}\overline{\tilde{h}^{U_{j'}}}(\xi_{U_{j'}},\eta_{U_{j'}}).
\end{align*}

The family $\{\langle \xi_{U_i}, \eta_{U_i}\rangle_{U_i}\}_{i\in I}$ defined as above satisfies the condition \eqref{hermicondi}; 
hence it is a $d$-Hermitian metric for $d$-complex bundle $E$ on the $d$-complex manifold $(X,\dholos{X})$.
\end{proof}

\section{Differential operators}\label{section3}
In this section, we first describe some basic concepts concerning differential operators in the context of 
$d$-complex vector bundles on $d$-complex manifolds. One of the main theorem of this section is Theorem 
\ref{ellipticdec}, which uses numerous results from Functional Analysis that involves working with Sobolev 
spaces (see Appendix \ref{appendix:sectionA}) and the theory of pseudo-differential operators 
(see \S \ref{subsection3.1}).

Let $E$ and $F$ be two $d$-complex bundles on a $d$-complex manifold $(X,\dholos{X})$ of dimension $n$ 
with $d$-holomorphic atlas $\mathcal{U}=\{(U_i,\phi_i)\}_{i\in I}$.

Recall that an element $P\in Hom_\mathcal{C}(E, F)$ (see \cite[Section 3.1]{SAAJ}) can be expressed as
a family $\{P_{U_i}:E(U_i)\rightarrow F(U_i)\}_{i\in I}$ of $\mathbb{C}$-linear morphisms with respect to the $d$-holomorphic atlas $\mathcal{U}$ satisfying
\begin{equation}\label{diffcompati}
P_{U_j}(s_{U_j}) = \left\{\begin{array}{lr}
        P_{U_i}(s_{U_i}), & \text{if } \trans{j}{i}\text{ is holomorphic};\vspace{7pt}\\
        \overline{P_{U_i}(s_{U_i})}, & \text{if } \trans{j}{i}\text{ is anti-holomorphic},
        \end{array}\right.
\end{equation}
for a global section $s=\{s_{U_i}\}_{i\in I}$ of the bundle $E$.

An element $P=\{P_{U_i}\}_{i\in I}\in Hom_\mathcal{C}(E,F)$ is called a differential operator of order 
$m$ if each $P_{U_i}$ ($i\in I$) is a differential operator of order $m$. For more details about the local 
description of differential operators of order $m$, see \cite[Chapter 4, Section 2]{ROW}.
Let $\mathrm{Diff}_m(E,F)$ denote the space of all $m$-th order differential operators between $d$-complex 
bundles $E$ and $F$.

\begin{proposition}
Let $P=\{P_{U_i}\}_{i\in I}\in \mathrm{Diff_m(E, F)}$ be a differential operator of order $m$. Then, 
there exists a family of linear maps 
$$
\{\sigma_m(P)_{U_i}:\big(T^\star_X\otimes \mathcal{L}\big)(U_i)\rightarrow Hom_{\mathcal{A}^0_d}(E|_{U_i},F|_{U_i})\}_{i\in I}
$$ 
with respect to the $d$-holomorphic atals $\mathcal{U}=\{(U_i,\phi_i)\}_{i\in I}$, satisfying the compatibility 
condition
\begin{equation}\label{symbol3}
\sigma_m(P)_{U_j}(\nu_{U_j})(s_{U_j}) = \left\{\begin{array}{lr}
       \sigma_m(P)_{U_i}(\nu_{U_i})(s_{U_i}), & \text{if } \trans{j}{i}\text{ is holomorphic};\vspace{7pt}\\
        \overline{\sigma_m(P)_{U_i}(\nu_{U_i})(s_{U_i})}, & \text{if } \trans{j}{i}\text{ is anti-holomorphic},
        \end{array}\right.
\end{equation}
for global sections $\nu=\{\nu_{U_i}\}_{i\in I}\in (T^\star X\otimes \mathcal{L})(X)$ and $s=\{s_{U_i}\}_{i\in I}\in E(X)$ with appropriate compatibility conditions. Each $\sigma_m(P)_{U_i}$ ($i\in I$) is 
uniquely defined such that for every open subset $V$ of $X$, $\nu_V\in (T^\star_X\otimes \mathcal{L})(V)$ (non-trivial) and $s_V\in E(V)$, we have
\begin{equation}\label{symbeq.}
\sigma_m(P)_V(\nu_V)(s_V)=P_V\left(\frac{i^m}{m!}\big(f_V-f_V(x)\big)^m s_V\right)\quad\text{ where }d_\mathcal{L}(f_V)=\nu_V.
\end{equation}
\end{proposition}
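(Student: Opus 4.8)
The plan is to reduce the whole statement to the classical construction of the symbol of a differential operator, and then to check that the locally defined symbols glue in the $d$-complex sense, i.e. satisfy \eqref{symbol3}. First I would fix $i\in I$ and note that $\mathcal{L}$ is trivial over the chart $U_i$, so there $d_\mathcal{L}$ is just the ordinary exterior derivative and $P_{U_i}$ is an honest order-$m$ differential operator between the trivial complex bundles $E|_{U_i}$ and $F|_{U_i}$. Thus the classical symbol calculus of \cite[Chapter 4, Section 2]{ROW} applies verbatim on each $U_i$ and produces a map $\sigma_m(P)_{U_i}$ satisfying \eqref{symbeq.}; uniqueness is immediate, since \eqref{symbeq.} prescribes the value on every admissible pair $(\nu_V,s_V)$.

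For the local well-definedness I would only recall the mechanism, not reprove it. Expanding $P_V\big(\tfrac{i^m}{m!}(f_V-f_V(x))^m s_V\big)$ by the Leibniz rule and using that $P_V$ has order $m$, every term in which fewer than $m$ derivatives hit the factor $(f_V-f_V(x))^m$ vanishes at $x$ because $(f_V-f_V(x))$ does; hence only the top-order part of $P_V$ contributes, and that part depends on $f_V$ solely through its first-order jet $d_\mathcal{L}(f_V)=\nu_V$ and on $s_V$ only through its value at $x$. This simultaneously shows that the right-hand side of \eqref{symbeq.} is independent of the chosen primitive $f_V$, that $\sigma_m(P)_{U_i}(\nu_{U_i})$ is a genuine element of $\mathrm{Hom}_{\mathcal{A}^0_d}(E|_{U_i},F|_{U_i})$, and that it is homogeneous of degree $m$ in $\nu_{U_i}$.

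The genuinely new input is the compatibility \eqref{symbol3}. Here I would choose, on $U_i\cap U_j$, primitives $f_{U_i}$ and $f_{U_j}$ of $\nu_{U_i}$ and $\nu_{U_j}$ related by the transition of $\mathcal{L}$-valued functions prescribed by Remark \ref{exterior-dc}; this is legitimate by the independence proved in the previous step. The decisive observation is then that the auxiliary local section $w:=\tfrac{i^m}{m!}(f-f(x))^m s$ transforms across the overlap exactly like a section of $E$: in the holomorphic case $w_{U_j}=w_{U_i}$, while in the anti-holomorphic case the conjugation of $(f-f(x))^m$ and of $s$ combine with the appropriate cocycle of $E$ to give $w_{U_j}=\overline{w_{U_i}}$ up to that cocycle. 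Feeding $w$ into the compatibility \eqref{diffcompati} of $P$ itself then yields $P_{U_j}(w_{U_j})=P_{U_i}(w_{U_i})$ or $\overline{P_{U_i}(w_{U_i})}$ according to the type of $\trans{j}{i}$, which is precisely \eqref{symbol3}.

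I expect the only real obstacle to be the sign bookkeeping in the anti-holomorphic case of this last step. The point is that the primitive $f_V$ in \eqref{symbeq.} is $\mathcal{L}$-valued (being a $d_\mathcal{L}$-primitive of $\nu_V\in(T^\star_X\otimes\mathcal{L})(V)$), so across an anti-holomorphic overlap its transition carries the extra factor $\cocycle{j}{i}{\mathcal{L}}=-1$; raising $(f-f(x))$ to the $m$-th power therefore produces $(-1)^m$, and this is exactly the factor needed to turn $\tfrac{i^m}{m!}$ into $\overline{\tfrac{i^m}{m!}}=\tfrac{(-i)^m}{m!}$, so that $w$ really transforms by pure conjugation and \eqref{diffcompati} applies cleanly. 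Verifying that these signs cancel, rather than reinforce, is where the precise normalizing constant $i^m/m!$ in \eqref{symbeq.} earns its place.
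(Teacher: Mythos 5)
Your proposal follows essentially the same route as the paper: construct $\sigma_m(P)_{U_i}$ chart-by-chart via the classical formula of \cite[Chapter 4, Section 2]{ROW} (with well-definedness and uniqueness quoted rather than reproved), and then derive the compatibility \eqref{symbol3} by feeding the auxiliary family $w=\tfrac{i^m}{m!}\big(f-f(x)\big)^m s$ into the compatibility \eqref{diffcompati} of $P$ itself, splitting into the holomorphic and anti-holomorphic cases exactly as the paper does. Your one addition is the explicit sign verification in the anti-holomorphic case --- the factor $(-1)^m$ from the $\mathcal{L}$-valued primitive turning $\tfrac{i^m}{m!}$ into $\tfrac{(-i)^m}{m!}$, so that $w$ genuinely transforms as a section of $E$ --- a point the paper's proof uses implicitly without checking when it applies \eqref{diffcompati} in its Case 2, and your bookkeeping confirms the paper's computation is correct.
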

\begin{proof}
Let $f=\{f_{U_i}\}_{i\in I}\in \mathcal{L}(X)$ be a smooth section such that 
$d(f_{U_i})=\nu_{U_i}$ and $s=\{s_{U_i}\}_{i\in I}\in E(X)$. Define $\sigma(P)_{U_i}(\nu_{U_i})(s_{U_i})(x)=P_{U_i}\left(\frac{i^m}{m!}(f_{U_i}-f_{U_i}(x))s_{U_i}\right)(x)$ $(i\in I)$, which is uniquely defined independent of choices made of $f=\{f_{U_i}\}_{i\in I}$ and $s=\{s_{U_i}\}_{i\in I}$ \cite[Chapter 4, Section 2, Page no. 115]{ROW}.

\noindent \textbf{Case-1:}
If transition map $\trans{j}{i}:\phi_i(U_i\cap U_j)\rightarrow \phi_j(U_i\cap U_j)$ is holomorphic, then 
\begin{align*}
\sigma_m(P)_{U_i}(\nu_{U_i})(s_{U_i})(x)&=P_{U_i}\left(\frac{i^m}{m!}(f_{U_i}-f_{U_i}(x))^ms_{U_i}\right)(x)\\
&=P_{U_j}\left(\frac{i^m}{m!}(f_{U_j}-f_{U_j}(x))^ms_{U_j}\right)(x)\quad(\text{from equation }\eqref{diffcompati})\\
&=\sigma_m(P)_{U_j}(\nu_{U_j})(s_{U_j})(x)\quad(\text{for }x\in U_i\cap U_j)
\end{align*}
\noindent \textbf{Case-2:}
If transition map $\trans{j}{i}:\phi^{-1}_i(U_i\cap U_j)\rightarrow \phi_j(U_i\cap U_j)$ is anti-holomorphic, then
\begin{align*}
\sigma_m(P)_{U_i}(\nu_{U_i})(s_{U_i})(x)&=P_{U_i}\left(\frac{i^m}{m!}(f_{U_i}-f_{U_i}(x))^ms_{U_i}\right)(x)\\
&=\overline{P_{U_j}\left(\frac{i^m}{m!}(f_{U_j}-f_{U_j}(x))^ms_{U_j}\right)}(x)\quad(\text{from equation }\eqref{diffcompati})\\
&=\overline{\sigma_m(P)_{U_j}(\nu_{U_j})(s_{U_j})}(x)\quad(\text{for }x\in U_i\cap U_j)
\end{align*}
From the above discussion, for a given $P=\{P_{U_i}\}_{i\in I}\in \mathrm{Diff_m(E, F)}$, we have a uniquely defined
family $\{\sigma_m(P)_{U_i}\}_{i\in I}$ satisfying the condition \eqref{symbol3}.

Let $V$ be an open subset of $X$, $s_V=\{s_{U_i\cap V}\}_{i\in I}\in E(V)$ and 
$f_V=\{f_{U_i\cap V}\}\in \mathcal{L}(V)$ such that 
$d(f_{U_i\cap V})=\nu_{U_i\cap V}\in \big(T^\star_X\otimes \mathcal{L}\big)(V)$ then,
\begin{align*}
\sigma_k(P)_V(\nu_V)(s_V)&=\{\sigma_k(P)_{V\cap U_i}(\nu_{V\cap U_i})(s_{V\cap U_i})\}_{i\in I}\\
&=\left\{P_{V\cap U_i}\left(\frac{i^m}{m!}\big(f_{V\cap U_i}-f_{V\cap U_i}(x)\big)^ms_{V\cap U_i}\right)\right\}_{i\in I}\\
&=P_V\left(\frac{i^m}{m!}\big(f_V-f_V(x)\big)^ms_V\right)\quad \big(\in F(V)\big).
\end{align*}
\end{proof}
\subsection{Pseudo-Differential operators}\label{subsection3.1}
Let $(X,\dholos{X})$ be a $d$-complex manifold of real dimension $2n$ with a $d$-holomorphic atlas 
$\mathcal{U}=\{(U_i,\phi_i)\}_{i\in I}$, which is also a smooth atlas on the associated smooth manifold and $\big(U\rightarrow\mathcal{D}(U)\big)$ be the sheaf 
of smooth functions with compact support on $X$. 
This section is based on \cite[Chapter 4, Section 3]{ROW} and \S \ref{appendix2}.

\begin{definition}\rm{
An element $L=\{L_{U_i}\}_{i\in I}\in 
\mathrm{Hom}_\mathbb{R}\big(\mathcal{D}(X),\mathcal{A}^0(\mathcal{L})(X)\big)$ is 
called pseudo-differential operator on $X$ if for each $i\in I$ and any open subset 
$U_i'\subset \overline{U}_i'\subset U_i$, there exist a polynomial $p_i\in S^m_0(\phi_i(U_i))$ 
(see \ref{appendix2}) such that, after extending each $f\in \mathcal{D}(U_i')$ by zero 
to an element of $\mathcal{D}(U_i)$ 
\begin{align*}
L_{U_i}(f)(x)&=L_{U_i}(p_i)(f)(x)=L_{U_i}(p_i)(f\circ \phi_i^{-1})(\phi_i(x))\\
&=\int_{\mathbb{R}^{2n}}p_i(\phi_i(x),y)\widehat{f\circ \phi_i^{-1}}(y) e^{i\langle \phi_i(x),y \rangle} dy\quad(\text{see }\ref{appendix2})
\end{align*}
where $\widehat{f\circ \phi_i^{-1}}(y)=(2\pi)^{-n}\int_{\mathbb{R}^{2n}}e^{-i\langle x,y \rangle} f\circ \phi_i^{-1}(x) dx$ is the fourier transform of $f\circ \phi_i^{-1}$.
}
\end{definition}

Let $E$ and $F$ be smooth $d$-complex vector bundles on a $d$-complex manifold $(X,\dholos{X})$ of ranks 
$l$ and $m$ respectively with same trivializing cover as the atlas $\mathcal{U}$ has. An element $L=\{L_{U_i}\}_{i\in I}\in 
Hom_\mathcal{C}\big(\mathcal{D}(E),\mathcal{A}^0(F)\big)$ is called a pseudo-differential 
operator between $E$ and $F$ if for each $i\in I$ and any open subset ${U}_i'\subset\overline{U}_i'\subset U_i$, 
there exist a matrix $p=(p_{rs})\in \mathrm{Mat}_{m\times l}(S^m_0(U_i))$ (see \ref{appendix2}) such that after 
extending each element $s_{U_i}\in\mathcal{D}(E)(U_i')\cong [\mathcal{D}(\mathcal{C})(U_i')]^l$ 
by zero to an element of $\mathcal{D}(E)(U_i)\cong [\mathcal{D}(\mathcal{C})(U_i)]^l$
$$
L_{U_i}(s_{U_i})(x)=L_{U_i}(p)(s_{U_i}\circ \phi_i^{-1})(\phi_i(x)).
$$
The operator $L_{U_i}(p)=[L_{U_i}(p_{rs})]$, is a matrix of canonical pseudo-differential operators 
$L_{U_i}(p_{rs})$ $(1\leq r \leq m; 1\leq s \leq l)$.

\begin{remark}\rm{
The local $m$-symbol of a pseudo-differential operator $L\in \mathrm{PDiff}(E,F)$ with respect to a 
fixed trivializations of $E$ and $F$ on some co-ordinate chart $(U_i,\phi_i)$, is the matrix
$$
\sigma_m(L)_{U_i}(x,y)=\sigma_m(p)(x,y)=(\sigma_m(p_{rs}))(x,y)\hspace{5pt}(1\leq r \leq m; 1\leq s \leq l),
$$
where $\sigma_m(p_{rs})(x,y)=\displaystyle\lim_{\lambda\to \infty} \frac{p_{rs}(x,\lambda y)}{\lambda^m}$ and $(x,y)\in U_i\times \mathbb{R}^{2n}\simeq\big(T^\star_X\otimes \mathcal{L}\big)(U_i)$ ($y\neq 0$).

The order of the local symbol may depends on the co-ordinate chart $(U_i,\phi_i)$, as we require 
$p_{rs}\in S^m_0(U_i)$ ($1\leq r\leq m;1\leq s\leq l$) without requiring the same value $``m"$ for each 
chart $(U_i,\phi_i)$ $(i\in I)$.
}
\end{remark}

\begin{theorem}\label{adjoint3}
Let $(U_i,\phi_i)$ and $(U_j,\phi_j)$ ($U_i\cap U_j\neq \phi$) be two relatively compact co-ordinate 
charts on the $d$-complex manifold $(X,\dholos{X})$ i.e. $\phi_i(U_i),\phi_j(U_j)\subset \mathbb{R}^{2n}$ 
have compact closures. The transition map 
$\trans{i}{j}=\phi_i\circ \phi_j^{-1}:\phi_j(U_j)\rightarrow \phi_i(U_i)$, which is either holomorphic or 
anti-holomorphic, is also a diffeomorphism from $\phi_j(U_j)$ onto $\phi_i(U_i)$. Let 
$L=\{L_{U_i}\}_{i\in I}\in \mathrm{Hom}_\mathcal{C}\big(\mathcal{D}(\mathcal{C})(X),\mathcal{C}(X)\big)$ 
and $p_i\in S^m_0(\phi_i(U_i))$ be such that $L_{U_i}\equiv L_{U_i}(p_i)$. Then, there is a polynomial 
$q_j\in S^m_0(\phi_j(U_j))$ such that $L_{U_j}\equiv L_{U_j}(q_j)$ and 
\[
\sigma_m(q_j)(\phi_j(x),\eta) = \left\{\begin{array}{lr}
\sigma_m(p_i)\left(\phi_i(x),\left[D(\trans{i}{j})^t\right]^{-1}\eta\right), 
& \text{if } \trans{j}{i} \text{ is holomorphic};\vspace{7pt}\\
\overline{\sigma_m(p_i)}\left(\phi_i(x),-\left[D(\trans{i}{j})^t\right]^{-1}\eta\right), 
& \text{if } \trans{j}{i} \text{ is anti-holomorphic},
\end{array}\right.
\]
for $(\phi_j(x),\eta)\in \phi_j(U_i\cap U_j)\times \mathbb{R}^{2n}\simeq \big(T^\star_X\otimes \mathcal{L}\big)(U_i\cap U_j)$ $(\eta\neq 0)$.
\end{theorem}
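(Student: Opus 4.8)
The plan is to deduce the statement from the classical coordinate-invariance of pseudo-differential operators recorded in \cite[Chapter 4, Section 3]{ROW}, and then to track the two extra pieces of data carried by the $d$-complex structure: the complex conjugation dictated by membership in $\mathrm{Hom}_\mathcal{C}$ and the sign dictated by the orientation line bundle $\mathcal{L}$. Since the two charts are relatively compact, the transition map $\trans{i}{j}=\phi_i\circ\phi_j^{-1}$ is a smooth diffeomorphism between relatively compact open subsets of $\mathbb{R}^{2n}$, regardless of whether it is holomorphic or anti-holomorphic (both being real-smooth). Hence the classical theorem applies verbatim at the level of the underlying real local representatives: the local expressions $L_{U_i}\equiv L_{U_i}(p_i)$ and $L_{U_j}$ of the single operator $L$ are intertwined by $\trans{i}{j}$ (up to the conjugation of \eqref{diffcompati}), producing a symbol $q_j\in S^m_0(\phi_j(U_j))$ with $L_{U_j}\equiv L_{U_j}(q_j)$, and the principal symbol transforms as a function on the cotangent bundle, giving
\[
\sigma_m(q_j)(\phi_j(x),\eta)=\sigma_m(p_i)\big(\phi_i(x),[D(\trans{i}{j})^t]^{-1}\eta\big).
\]
This already settles the holomorphic case, where $\cocycle{j}{i}{\mathcal{L}}=1$.

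First I would make the cotangent change of variables explicit: writing a covector at $x\in U_i\cap U_j$ in both charts and using $du=D(\trans{i}{j})\,dv$, the $i$-chart and $j$-chart fibre coordinates $\eta_i,\eta_j\in\mathbb{R}^{2n}$ obey $\eta_i=[D(\trans{i}{j})^t]^{-1}\eta_j$, which is exactly the argument appearing on the right-hand side; this accounts for the Jacobian factor without any reference to the complex structure.

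Next I would insert the two corrections distinguishing the anti-holomorphic case. For the conjugation: since $L=\{L_{U_i}\}_{i\in I}\in\mathrm{Hom}_\mathcal{C}\big(\mathcal{D}(\mathcal{C})(X),\mathcal{C}(X)\big)$, the compatibility condition \eqref{diffcompati} forces $L_{U_j}(s_{U_j})=\overline{L_{U_i}(s_{U_i})}$ whenever $\trans{j}{i}$ is anti-holomorphic; because the symbol is obtained by applying the local operator to $\tfrac{i^m}{m!}(f-f(x))^m s$, this conjugation passes directly to $\sigma_m$, producing the overline $\overline{\sigma_m(p_i)}$. For the sign: the symbol's fibre variable is identified with a point of $\big(T^\star_X\otimes\mathcal{L}\big)(U_i)\simeq U_i\times\mathbb{R}^{2n}$, so when the same covector is re-expressed in the $j$-chart the cotangent change of variables is multiplied by the $\mathcal{L}$-cocycle $\cocycle{j}{i}{\mathcal{L}}=-1$, converting $[D(\trans{i}{j})^t]^{-1}\eta$ into $-[D(\trans{i}{j})^t]^{-1}\eta$. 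Combining these two corrections with the holomorphic-case identity yields the stated formula.

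I expect the main obstacle to be the careful bookkeeping in this last step: disentangling the conjugation (an operation on the \emph{values} of the symbol, coming from $\mathrm{Hom}_\mathcal{C}$) from the orientation sign (an operation on the \emph{argument} of the symbol, coming from the $\mathcal{L}$-twist), and verifying that they do not interfere. In particular, one must check that the anti-commutation $D(\trans{i}{j})\circ J=-J\circ D(\trans{i}{j})$ encoded by \eqref{almostcompati} for anti-holomorphic transitions is already fully absorbed into the single factor $\cocycle{j}{i}{\mathcal{L}}=-1$, so that no further complex-linearity adjustment to the real Jacobian is required.
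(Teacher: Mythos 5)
Your holomorphic case coincides with the paper's: both reduce to the classical change-of-coordinates theorem for pseudo-differential operators, \cite[Chapter 4, Theorem 3.9]{ROW}, applied to the real diffeomorphism $\trans{i}{j}$, and your explicit derivation of $\eta_i=[D(\trans{i}{j})^t]^{-1}\eta_j$ is exactly the Jacobian bookkeeping there. The gap is in the anti-holomorphic case, and it is precisely at the point you flagged as "bookkeeping." You treat the conjugation bar and the minus sign as two independent corrections, deriving the sign from the geometric identification of the fibre variable with $\big(T^\star_X\otimes\mathcal{L}\big)(U_i\cap U_j)$ and the cocycle $\cocycle{j}{i}{\mathcal{L}}=-1$. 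But the theorem being proved \emph{is} the statement that the locally defined Fourier symbols patch together according to that twisted rule; the identification of the symbol's fibre variable with $T^\star_X\otimes\mathcal{L}$ is a consequence of this transformation law, not an input available beforehand. Invoking it to produce the sign is circular. In the analytic framework of the statement, $\eta$ is nothing but the Fourier dual variable of the $j$-chart representative, and the sign must be extracted from the analysis. The paper does this: writing $L_{U_j}(s_{U_j})=\overline{L_{U_i}(s_{U_i})}$ and conjugating the oscillatory integral, the identity $\overline{\widehat{f}}(y)=\widehat{\overline{f}}(-y)$ followed by the substitution $y\mapsto -y$ shows that the conjugated representative is again a canonical pseudo-differential operator with total symbol $\tilde{p}_i(x,y)=\overline{p}_i(x,-y)\in S^m_0(\phi_i(U_i))$; a single application of \cite[Theorem 3.9]{ROW} to $\tilde{p}_i$ then delivers $q_j$ with \emph{both} the overline and the sign at once. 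In your version this computation is replaced by two assertions, and without it you have not even established that $L_{U_j}$ is given by some $q_j\in S^m_0(\phi_j(U_j))$ in the anti-holomorphic case: conjugating a pseudo-differential operator is not self-evidently again one of the canonical operators $L(q)$.

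A secondary inaccuracy: you justify "the conjugation passes directly to $\sigma_m$" via the formula $P\big(\tfrac{i^m}{m!}(f-f(x))^m s\big)$, but that characterization of the symbol is valid only for differential operators. For $L\in\mathrm{PDiff}_m$ the symbol is defined by the homogeneity limit $\sigma_m(p)(x,y)=\lim_{\lambda\to\infty}p(x,\lambda y)/\lambda^m$ applied to a total symbol $p$, so one must first exhibit the conjugated operator's total symbol (the $\tilde{p}_i$ above) before taking the limit — this is again the same missing Fourier computation. Your closing concern about absorbing the anti-commutation $D(\trans{i}{j})\circ J=-J\circ D(\trans{i}{j})$ of \eqref{almostcompati} is, by contrast, a non-issue: the paper's proof never uses it, since the Jacobian enters only through the real change of variables. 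To repair your argument, replace the two-correction heuristic with the kernel-level identity $\overline{L(p)(f)}=L(\tilde{p})(\overline{f})$, $\tilde{p}(x,y)=\overline{p}(x,-y)$, verify $\tilde{p}\in S^m_0$, and then quote \cite[Theorem 3.9]{ROW} once.
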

\begin{proof}
Let $s=\{s_{U_i}\}_{i\in I}\in \mathcal{C}(X)$ with respect to the atlas $\mathcal{U}$ such that each $s_{U_i}$ $(i\in I)$ has compact support i.e. $s_{U_i}\in \mathcal{D}(\mathcal{C})(U_i)$, we break the proof in two cases as follows.

\noindent \textbf{Case-1:}
If $\phi_j\circ \phi_i^{-1}$ is holomorphic. Then $L_{U_j}(s_{U_j})(x)=L_{U_i}(s_{U_i})(x)=L_{U_i}(p)(s_{U_i}\circ \phi_i^{-1})(\phi_i(x))$ and from \cite[Theorem 3.9]{ROW}, we have a polynomial $q(x,y)\in S^m_0(\phi_j(U_i\cap U_j))$ such that 
\begin{align*}
L_{U_j}(s_{U_j})(x)&=L_{U_j}(q)(s_{U_j}\circ\phi_j^{-1})(\phi_j(x))\quad\text{ and }\\
\sigma_m(q)(\phi_j(x),\eta)&=\sigma_m(p)\left(\phi_i(x),\left[D(\trans{i}{j})^t\right]^{-1}\eta\right).
\end{align*}
\noindent \textbf{Case-2:}
If $\phi_j\circ \phi_i^{-1}$ is anti-holomorphic, we have
\begin{align*}
L_{U_i}(s_{U_i})(x)&=L_{U_i}(p)(s_{U_i})(x)=\displaystyle\int_{\mathbb{R}^{2n}}p(\phi_i(x),y).\hspace{5pt} \widehat{s_{U_i}\circ\phi_i^{-1}}(y)\hspace{5pt}e^{i\langle \phi_i(x),y \rangle} dy\quad \text{and }\\
L_{U_j}(s_{U_j})=\overline{L_{U_i}(s_{U_i})}(x)&=\displaystyle\int_{\mathbb{R}^{2n}}\overline{p}(\phi_i(x),y).\hspace{5pt} \widehat{\overline{s_{U_i}\circ\phi_i^{-1}}}(-y)\hspace{5pt}e^{i\langle \phi_i(x),-y \rangle} dy\quad\text{ (property of fourier transform)}\\
&=\displaystyle\int_{\mathbb{R}^{2n}}\overline{p}(\phi_i(x),-y).\hspace{5pt} \widehat{\overline{s_{U_i}\circ\phi_i^{-1}}}(y)\hspace{5pt}e^{i\langle \phi_i(x),y \rangle} dy
\end{align*}

From \cite[Theorem 3.9]{ROW}, we have a polynomial $q\in S^m_0(\phi_j(U_i\cap U_j))$ such that 
\begin{align*}
L_{U_j}(s_{U_j})(x)=L_{U_j}(q_j)(s_{U_j})(x)&=\displaystyle\int_{\mathbb{R}^{2n}}q_j(\phi_j(x),\eta).\hspace{5pt} \widehat{s_{U_j}\circ\phi_j^{-1}}(\eta)\hspace{5pt}e^{i\langle \phi_j(x),\eta \rangle} d\eta\quad\text{ and }\\
\sigma_m(q_j)(\phi_j(x),\eta)&=\overline{\sigma_m(p_i)}\left(\phi_i(x),-\left[D(\trans{i}{j})^t\right]^{-1}\eta\right).
\end{align*}
where $(\phi_j(x),\eta)\in \phi_j(U_j\cap U_i)\times \mathbb{R}^{2n}$ and $(\phi_i(x),y)\in \phi_i(U_i\cap U_j)\times \mathbb{R}^{2n}$ satisfying $y=-\left[D(\trans{i}{j})^t\right]^{-1}\eta$.
\end{proof}

Now, we can define the order of pseudo-differential operators as follows.

\begin{definition}\rm{
A pseudo-differential operator $L=\{L_{U_i}\}_{i\in I}\in \mathrm{PDiff}(E,F)$ with respect to the
$d$-holomorphic atlas $\mathcal{U}=\{(U_i,\phi_i)\}_{i\in I}$ of the $d$-complex manifold $X$, is said 
to be of order $m$, if for any choice of co-ordinate chart $U_i\subset X$, the operator $L_{U_i}$ is of 
order $m$.
}
\end{definition}

Let $\mathrm{PDiff}_m(X)$ denote the collection of all pseudo-differential operators on the $d$-complex 
manifold $(X,\dholos{X})$ of order $m$.

\begin{proposition}\label{adjoint2}
Let $(X,\dholos{X})$ be a compact $d$-complex manifold. Then $\mathrm{PDiff}_m(X)\subset \mathrm{OP}_m(X)$.
\end{proposition}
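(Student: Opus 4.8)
The plan is to reduce the global statement to the classical local Sobolev-continuity estimate for canonical pseudo-differential operators on $\mathbb{R}^{2n}$, and then patch the local estimates together using the compatibility of orders established in Theorem \ref{adjoint3}. Recall that $L \in \mathrm{OP}_m(X)$ means precisely that $L$ admits a continuous extension between the Sobolev completions, i.e. it induces a bounded operator $W^s(X) \to W^{s-m}(X)$ for every $s$ (these completions being the ones constructed in Appendix \ref{appendix:sectionA}). Since $X$ is compact, I would first fix a finite subcover $\{U_1,\dots,U_N\}$ of the $d$-holomorphic atlas by relatively compact charts, together with a smooth partition of unity $\{\rho_i\}_{i=1}^N$ subordinate to it.

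Next I would localize. For $L=\{L_{U_i}\}_{i\in I}\in \mathrm{PDiff}_m(X)$, each $L_{U_i}$ is, by definition, a canonical pseudo-differential operator $L_{U_i}(p)$ of order $m$ on $\phi_i(U_i)\subset \mathbb{R}^{2n}$, with symbol $p\in S^m_0(\phi_i(U_i))$. On each chart the classical theorem on Sobolev boundedness of canonical pseudo-differential operators (\cite[Chapter 4]{ROW}) gives that $L_{U_i}$ extends to a bounded map $W^s\to W^{s-m}$ on compactly supported distributions, the relevant estimate being uniform because the charts are relatively compact. Writing any global section as $u=\sum_i \rho_i u$ and using that multiplication by the smooth compactly supported functions $\rho_i$ preserves Sobolev spaces, the boundedness of $L$ would follow once the local pieces are known to be compatible across overlaps.

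The main obstacle, and the only genuinely new point relative to the classical complex-manifold case, is the behaviour over the anti-holomorphic overlaps. On such an overlap the gluing rule \eqref{diffcompati} introduces a complex conjugation, and Theorem \ref{adjoint3} shows that the associated symbol transforms by $\sigma_m(q_j)(\phi_j(x),\eta)=\overline{\sigma_m(p_i)}(\phi_i(x),-[D(\trans{i}{j})^t]^{-1}\eta)$; in particular the order $m$ is independent of the chart, so $L$ is genuinely well-defined as an element of $\mathrm{PDiff}_m(X)$. The key observation is that complex conjugation is an (anti-linear) isometry of every Sobolev completion, so it preserves the bound $W^s\to W^{s-m}$, while the invertible change of frequency $\eta\mapsto -[D(\trans{i}{j})^t]^{-1}\eta$ is a diffeomorphism with uniformly bounded derivatives on the relatively compact overlaps, under which the $S^m_0$-symbol class and hence the Sobolev estimates are invariant (again \cite[Theorem 3.9]{ROW}). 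Assembling the finitely many local bounds through the partition of unity then yields a bounded extension $L:W^s(X)\to W^{s-m}(X)$, so $L\in \mathrm{OP}_m(X)$, which is exactly the desired inclusion.
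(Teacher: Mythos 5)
Your proposal is correct and follows essentially the same route as the paper, whose proof simply defers to \cite[Chapter 4, Proposition 3.13]{ROW}: localize via a finite cover and partition of unity (using compactness), invoke the local Sobolev boundedness of the canonical operators $L(p)$ for $p\in S^m_0$, and assemble the estimates through the chartwise definition of the Sobolev norm from Appendix \ref{appendix1}. The only point you make explicit that the paper leaves implicit in its appeal to the ``definition of Sobolev norm on compact $d$-complex manifold'' is the behaviour over anti-holomorphic overlaps, where you correctly observe, via Theorem \ref{adjoint3}, that complex conjugation is an isometry of the Sobolev completions and the frequency change $\eta\mapsto -\left[D(\trans{i}{j})^t\right]^{-1}\eta$ preserves the symbol class, so the $d$-complex gluing causes no loss in the estimates.
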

\begin{proof}
The result follows as in \cite[Chapter 4, Propostion 3.13]{ROW}, using properties of the operator $L(p)$ 
(see Definition \ref{pseuddef} \S \ref{appendix2}) and definition of Sobolev norm on compact $d$-complex manifold 
(see \S \ref{appendix1}).
\end{proof}

\begin{definition}\rm{
Let $E$ and $F$ be two $d$-complex vector bundles on a $d$-complex manifold $(X,\dholos{X})$ and $T'_X\otimes \mathcal{L}\xrightarrow{\pi}X$ be the co-tangent bundle taking values in orientation lie bundle $\mathcal{L}$ with trivial section deleted, define
$$
\mathrm{Smbl}_m(E,F)=\{\sigma_m:\big(T'_X\otimes \mathcal{L}\big)(X)\rightarrow Hom_\mathcal{C}(E,F)\mid \sigma_m(x,\rho\nu)=\rho^m\sigma(x,\nu)\text{ for }\rho>0\}
$$
For $\nu=\{\nu_i\}_{i\in I}\in (T'_X\otimes \mathcal{L})(X)$ and $s=\{s_{U_i}\}_{i\in I}\in E(X)$, an element $\sigma\in \mathrm{Smbl}_m(E,F)$ can be expressed as a family $\{\sigma_{U_i}\}_{i\in I}$ with respect to the atlas $\mathcal{U}=\{(U_i,\phi_i\}_{i\in I}$ satisfying the compatibility condition
\[
    \sigma_{U_j}(\nu_j)(s_{U_j}) = \left\{\begin{array}{lr}
        \sigma_{U_i}(\nu_i)(s_{U_i}), & \text{if } \trans{j}{i} \text{ is holomorphic};\\
        \overline{\sigma_{U_i}(\nu_i)(s_{U_i})}, & \text{if } \trans{j}{i} \text{ is anti-holomorphic}
        \end{array}\right.
  \]
}
\end{definition}

For simplicity denote the collection $\mathrm{Smbl}_m(\mathcal{C},\mathcal{C})$ by $\mathrm{Smbl}_m(X)$, where $\mathcal{C}$ is the trivial $d$-complex line bundle on the $d$-complex manifold $(X,\dholos{X})$.

\begin{proposition}\label{sigmaprop}
For a given $d$-complex manifold $(X,\dholos{X})$, there exists a canonical linear map
$$
\sigma_m:\mathrm{PDiff}_m(X)\rightarrow \mathrm{Smbl}_m(X).
$$
Which is defined locally in a co-ordinate chart with $x\in U\subset X$ by
$$
\sigma_m(L_U)(x,\xi)=\sigma_m(p)(x,\xi),
$$
where $L_U=L(p)\in \mathrm{PDiff}_m(X)(U)$ and $(x,\xi)\in U\times \mathbb{R}^{2n}\simeq \big(T'_X\otimes \mathcal{L}\big)(U) $.
\end{proposition}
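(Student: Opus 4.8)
The plan is to define the map chart-by-chart using the local $m$-symbol introduced in the Remark preceding Theorem~\ref{adjoint3}, and then to show that these local symbols patch together into a single global element of $\mathrm{Smbl}_m(X)$. Concretely, given $L=\{L_{U_i}\}_{i\in I}\in \mathrm{PDiff}_m(X)$, for each $i\in I$ one chooses a representing polynomial $p_i\in S^m_0(\phi_i(U_i))$ with $L_{U_i}\equiv L_{U_i}(p_i)$ and sets $\sigma_m(L)_{U_i}(x,\xi):=\sigma_m(p_i)(x,\xi)=\lim_{\lambda\to\infty}\lambda^{-m}p_i(x,\lambda\xi)$ on $U_i\times(\mathbb{R}^{2n}\setminus\{0\})\simeq (T'_X\otimes\mathcal{L})(U_i)$. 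The first thing I would check is that this is independent of the chosen representative $p_i$: although $p_i$ is not unique, its leading $m$-homogeneous part is determined by the operator $L_{U_i}(p_i)$, so $\sigma_m(p_i)$ is canonical; this is the local statement taken from \cite[Chapter~4, Section~3]{ROW}.

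The essential step is to verify the compatibility condition in the definition of $\mathrm{Smbl}_m(X)$, namely that on $U_i\cap U_j$ one has $\sigma_m(L)_{U_j}(\nu_j)=\sigma_m(L)_{U_i}(\nu_i)$ when $\trans{j}{i}$ is holomorphic and $\sigma_m(L)_{U_j}(\nu_j)=\overline{\sigma_m(L)_{U_i}(\nu_i)}$ when $\trans{j}{i}$ is anti-holomorphic. This is exactly what Theorem~\ref{adjoint3} provides once the covector arguments are identified correctly. Writing a covector $\nu=\{\nu_{U_i}\}\in(T'_X\otimes\mathcal{L})(X)$ in the two charts as $\eta$ (in chart $j$) and $y$ (in chart $i$), the transition rule for the cotangent bundle gives $y=[D(\trans{i}{j})^t]^{-1}\eta$, while the orientation twist by $\mathcal{L}$ inserts the extra factor $\cocycle{j}{i}{\mathcal{L}}\in\{1,-1\}$, so that $y=[D(\trans{i}{j})^t]^{-1}\eta$ in the holomorphic case and $y=-[D(\trans{i}{j})^t]^{-1}\eta$ in the anti-holomorphic case. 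Substituting these identifications into the symbol transformation laws
\[
\sigma_m(q_j)(\phi_j(x),\eta)=\sigma_m(p_i)\bigl(\phi_i(x),[D(\trans{i}{j})^t]^{-1}\eta\bigr),\qquad
\sigma_m(q_j)(\phi_j(x),\eta)=\overline{\sigma_m(p_i)}\bigl(\phi_i(x),-[D(\trans{i}{j})^t]^{-1}\eta\bigr)
\]
of Theorem~\ref{adjoint3} shows precisely that $\sigma_m(L)_{U_j}(\nu_j)=\sigma_m(L)_{U_i}(\nu_i)$ in the holomorphic case and $\sigma_m(L)_{U_j}(\nu_j)=\overline{\sigma_m(L)_{U_i}(\nu_i)}$ in the anti-holomorphic case. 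Hence $\{\sigma_m(L)_{U_i}\}_{i\in I}$ satisfies the required compatibility and defines an element $\sigma_m(L)\in\mathrm{Smbl}_m(X)$.

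Two routine verifications then complete the argument. The homogeneity requirement $\sigma_m(L)(x,\rho\xi)=\rho^m\sigma_m(L)(x,\xi)$ for $\rho>0$ follows immediately from the defining limit $\sigma_m(p_i)(x,\xi)=\lim_{\lambda\to\infty}\lambda^{-m}p_i(x,\lambda\xi)$, which is homogeneous of degree $m$ in $\xi$. Linearity follows from the fact that each local symbol map $p_i\mapsto\sigma_m(p_i)$ is additive on operators of the same order $m$ (again \cite[Chapter~4]{ROW}): for $L,L'\in\mathrm{PDiff}_m(X)$ the representatives add chartwise and the $m$-homogeneous leading parts add, giving $\sigma_m(L+L')=\sigma_m(L)+\sigma_m(L')$ and $\sigma_m(cL)=c\,\sigma_m(L)$.

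The main obstacle is the compatibility step: one must correctly reconcile the transition law of Theorem~\ref{adjoint3}---with its sign and complex conjugation in the anti-holomorphic case---against the definition of $\mathrm{Smbl}_m(X)$ and the local trivialization $(x,\xi)\in U\times\mathbb{R}^{2n}\simeq (T'_X\otimes\mathcal{L})(U)$ of the twisted cotangent bundle. The point I expect to require care is that the sign $-[D(\trans{i}{j})^t]^{-1}$ appearing in Theorem~\ref{adjoint3} is not an artifact but is exactly the cocycle $\cocycle{j}{i}{\mathcal{L}}=-1$ of the orientation line bundle acting on the covector argument, while the conjugation matches the conjugation built into the compatibility condition for $\mathrm{Smbl}_m(X)$; once this bookkeeping is aligned, the two formulas coincide and the global symbol is well defined.
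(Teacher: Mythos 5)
Your proposal is correct and follows essentially the same route as the paper: the paper's proof consists precisely of invoking Theorem~\ref{adjoint3} to glue the chartwise leading symbols $\sigma_m(p_i)$ into a global element of $\mathrm{Smbl}_m(X)$, with the sign in the anti-holomorphic case absorbed by the $\mathcal{L}$-twist exactly as you describe. Your write-up merely makes explicit the well-definedness, homogeneity, and linearity checks that the paper leaves implicit.
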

\begin{proof}
The proof follows using the Theorem \ref{adjoint3}.
\end{proof}
The above procedure generalizes to pseudodifferential operators mapping sections of $d$-complex vector bundle $E$ to sections of $d$-complex vector bundle $F$. We shall denote the space of pseudodifferential operatos of order $m$ mapping $\mathcal{D}(E)(X)$ into $\mathcal{A}^0(F)(X)$ by $\mathrm{PDiff_m(E, F)}$. We can have an analog to Proposition \ref{sigmaprop} for $d$-complex bundles.
\begin{proposition}\label{sigmaprop1}
Let $E$ and $F$ be $d$-complex vector bundle over a $d$-complex manifold $(X,\dholos{X})$. There exists a canonical linear map
$$
\sigma_m:\mathrm{PDiff_m(E, F)}\rightarrow \mathrm{Smbl}_m(E,F).
$$
The map $\sigma_m$, is defined locally in a co-ordinate chart with $x\in U\subset X$ by
$$
\sigma_m(L_U)(x,\xi)=[\sigma_m(p_{ij})(x,\xi)]
$$
where $L_U=[L(p_{ij})]\in \mathrm{PDiff}(E_{U},F_{U})$ is the matrix of canonical pseudo-differential operators on $X$ and $(x,\xi)$ is an element of $U\times \mathbb{R}^{2n}\simeq \big(T'_X \otimes \mathcal{L}\big)(U)$.
\end{proposition}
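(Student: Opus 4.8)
The plan is to reduce the assertion to the scalar case of Proposition \ref{sigmaprop} by working entrywise with respect to fixed local frames, and then to verify that the resulting matrix of local symbols patches into a genuine element of $\mathrm{Smbl}_m(E,F)$. First I would fix, for each chart $(U_i,\phi_i)$ of the $d$-holomorphic atlas $\mathcal{U}$, the trivializations $E|_{U_i}\cong \phi_i(U_i)\times\mathbb{C}^l$ and $F|_{U_i}\cong \phi_i(U_i)\times\mathbb{C}^m$ coming from the trivializing data of $E$ and $F$. With respect to these frames the local component $L_{U_i}$ of $L=\{L_{U_i}\}_{i\in I}\in \mathrm{PDiff_m(E, F)}$ is represented by a matrix $L_{U_i}=[L(p^i_{rs})]$ of canonical pseudo-differential operators with $p^i_{rs}\in S^m_0(U_i)$, and I would define the candidate local symbol to be the matrix
\[
\sigma_m(L_U)(x,\xi)=[\sigma_m(p^i_{rs})(x,\xi)],\qquad (x,\xi)\in U_i\times\mathbb{R}^{2n}\simeq\big(T'_X\otimes\mathcal{L}\big)(U_i),
\]
where $\sigma_m(p^i_{rs})(x,\xi)=\lim_{\lambda\to\infty}\lambda^{-m}p^i_{rs}(x,\lambda\xi)$ is the scalar principal symbol. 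Linearity of $\sigma_m$ and the homogeneity $\sigma_m(x,\rho\xi)=\rho^m\sigma_m(x,\xi)$ for $\rho>0$ are then immediate, since both hold entrywise by the scalar construction.

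The substance of the proof is to check that this matrix-valued symbol satisfies the compatibility condition defining $\mathrm{Smbl}_m(E,F)$. For this I would apply Theorem \ref{adjoint3} to each entry: on an overlap $U_i\cap U_j$ the scalar symbols $\sigma_m(p^i_{rs})$ and $\sigma_m(q^j_{rs})$ of the two local representations are related through the coordinate change by the factor $\big[D(\trans{i}{j})^t\big]^{-1}$ in the holomorphic case, and by $-\big[D(\trans{i}{j})^t\big]^{-1}$ together with complex conjugation of the symbol in the anti-holomorphic case. Combining this entrywise transformation with the gluing of the frames, governed by the cocycle maps $\cocycle{j}{i}{E}$ and $\cocycle{j}{i}{F}$ (respectively their conjugates $\conjcocycle{j}{i}{E}$ and $\conjcocycle{j}{i}{F}$ when $\trans{j}{i}$ is anti-holomorphic), I would verify that for compatible global data $\nu=\{\nu_i\}_{i\in I}\in (T'_X\otimes\mathcal{L})(X)$ and $s=\{s_{U_i}\}_{i\in I}\in E(X)$ the evaluation $\sigma_{U_j}(\nu_j)(s_{U_j})$ reduces to $\sigma_{U_i}(\nu_i)(s_{U_i})$ when $\trans{j}{i}$ is holomorphic and to $\overline{\sigma_{U_i}(\nu_i)(s_{U_i})}$ when $\trans{j}{i}$ is anti-holomorphic, exactly the two branches of the $\mathrm{Smbl}_m(E,F)$ compatibility condition.

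The main obstacle is bookkeeping the two independent sources of complex conjugation in the anti-holomorphic case: the conjugation of the symbol produced by Theorem \ref{adjoint3}, and the conjugation produced by the bundle gluing of $E$ and $F$. I expect these contributions to combine consistently so that the matrix symbol transforms by a single overall conjugation, matching the anti-holomorphic branch; confirming that no spurious sign or residual conjugation survives is the only delicate point, and it follows by tracking the frames of $E,F$ and the coordinate change simultaneously, exactly as in the scalar Proposition \ref{sigmaprop}.
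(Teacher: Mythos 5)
Your proposal is correct and follows essentially the same route as the paper: the paper proves the scalar case (Proposition \ref{sigmaprop}) via Theorem \ref{adjoint3} and then asserts Proposition \ref{sigmaprop1} as its entrywise analogue for matrices $[L(p_{rs})]$ of canonical pseudo-differential operators, which is precisely your reduction. If anything, your write-up is more explicit than the paper's, since you spell out how the conjugation from Theorem \ref{adjoint3} and the conjugation from the cocycles $\cocycle{j}{i}{E}$, $\cocycle{j}{i}{F}$ combine in the anti-holomorphic case to yield the single overall conjugation required by the compatibility condition defining $\mathrm{Smbl}_m(E,F)$.
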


\begin{theorem}\label{symbl}
Let $E$ and $F$ be $d$-complex vector bundles on $d$-complex manifold $(X,\dholos{X})$. Then the following sequence is exact
$$
0\rightarrow K_m(E,F)\rightarrow \mathrm{PDiff_m(E, F)}\xrightarrow{\sigma_m}\mathrm{Smbl}_m(E,F)\rightarrow 0
$$
where $\sigma_m$ is canonical symbol map as described in the Proposition \ref{sigmaprop1}, $K_m(E,F)$ is kernel of $\sigma_m$ and $K_m(E,F)\subset \mathrm{OP}_{m-1}(E,F)$, if $X$ is compact.
\end{theorem}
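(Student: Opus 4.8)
The plan is to treat the three non-trivial assertions separately: exactness at the left two spots, surjectivity of $\sigma_m$, and the inclusion $K_m(E,F)\subset \mathrm{OP}_{m-1}(E,F)$ in the compact case. The left-hand exactness needs no work beyond unwinding definitions: $K_m(E,F)$ is by definition $\ker\sigma_m$, the first map is the inclusion (hence injective), and its image is exactly $\ker\sigma_m$; thus exactness at $K_m(E,F)$ and at $\mathrm{PDiff}_m(E,F)$ holds automatically. The substance of the exactness statement therefore reduces to showing that $\sigma_m$ maps onto $\mathrm{Smbl}_m(E,F)$.

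For surjectivity, I would start with a global symbol $\sigma=\{\sigma_{U_i}\}_{i\in I}\in \mathrm{Smbl}_m(E,F)$ and first realize it locally. Over each chart $(U_i,\phi_i)$ the piece $\sigma_{U_i}$ is a matrix of functions on $(T'_X\otimes\mathcal{L})(U_i)\simeq \phi_i(U_i)\times(\mathbb{R}^{2n}\setminus\{0\})$, homogeneous of degree $m$ in the fibre variable. Choosing a smooth cutoff $\chi$ that vanishes near $\xi=0$ and equals $1$ for large $|\xi|$, the matrix $p_i:=\chi\,\sigma_{U_i}$ lies in $\mathrm{Mat}(S^m_0(\phi_i(U_i)))$ and has leading symbol $\sigma_{U_i}$, so the canonical operator $L_{U_i}(p_i)$ is a local pseudo-differential operator of order $m$ with $\sigma_m(L_{U_i}(p_i))=\sigma_{U_i}$. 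I would then glue these pieces by a partition of unity $\{\rho_i\}$ subordinate to $\{U_i\}$, together with bump functions $\tilde{\rho}_i$ supported in $U_i$ and equal to $1$ on $\mathrm{supp}\,\rho_i$, by setting $L=\sum_i \tilde{\rho}_i\,L_{U_i}(p_i)\,\rho_i$.

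The delicate point is to verify that $L$ actually belongs to $\mathrm{PDiff}_m(E,F)$ and that $\sigma_m(L)=\sigma$. Since composing a pseudo-differential operator with multiplication operators changes it only by terms of strictly lower order, the $m$-symbol is unaffected by the cutoffs, so on $U_i$ the symbol of $L$ is $\sum_j \rho_j\,\sigma_{U_j}$; expressing each $\sigma_{U_j}$ in the chart $U_i$ via the compatibility condition built into the definition of $\mathrm{Smbl}_m(E,F)$ and using $\sum_j\rho_j=1$ recovers exactly $\sigma_{U_i}$, the local representative of $\sigma$. I expect the main obstacle to be the consistency of this identification across anti-holomorphic overlaps: the symbol transformation law of Theorem \ref{adjoint3} introduces both a conjugation and a sign on the fibre variable, and one must check that these match the conjugation appearing in the compatibility condition for $\mathrm{Smbl}_m(E,F)$, so that the glued local pieces genuinely satisfy the transition law of a pseudo-differential operator on a $d$-complex manifold rather than merely agreeing up to conjugation.

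For the final inclusion, suppose $L\in K_m(E,F)$, i.e. $\sigma_m(L)=0$. Then in every chart the leading symbol of the local representative vanishes, which forces the representing matrix to lie in $\mathrm{Mat}(S^{m-1}_0(\phi_i(U_i)))$; hence $L\in \mathrm{PDiff}_{m-1}(E,F)$. Invoking the bundle analogue of Proposition \ref{adjoint2} on the compact manifold $X$, namely $\mathrm{PDiff}_{m-1}(E,F)\subset \mathrm{OP}_{m-1}(E,F)$, gives $K_m(E,F)\subset \mathrm{OP}_{m-1}(E,F)$, completing the argument.
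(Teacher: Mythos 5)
Your overall route is the same as the paper's: the paper simply defers to Wells (Chapter 4, Theorem 3.16) ``using smooth partition of unity,'' and your proposal reconstructs exactly that argument --- left exactness by definition, local realization of a symbol by an $\xi$-cutoff $\chi\,\sigma_{U_i}$, gluing by $\sum_i\tilde{\rho}_i\,L_{U_i}(p_i)\,\rho_i$, with the $d$-complex novelty concentrated in the anti-holomorphic overlap check. You flag that check rather than execute it, but you correctly identify Theorem \ref{adjoint3} as the tool: its transformation law $\sigma_m(q_j)(\phi_j(x),\eta)=\overline{\sigma_m(p_i)}\left(\phi_i(x),-\left[D(\trans{i}{j})^t\right]^{-1}\eta\right)$ is precisely the $\mathcal{L}$-twisted cotangent transition together with the conjugation in the compatibility condition defining $\mathrm{Smbl}_m(E,F)$, so the glued pieces do satisfy the required transition law. (One small technical point: to have $p_i\in \mathrm{Mat}(S^m_0(\phi_i(U_i)))$ you need compact $x$-support, so take $p_i=\tilde{\rho}_i\,\chi\,\sigma_{U_i}$ rather than $\chi\,\sigma_{U_i}$; this does not change the argument.)

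There is, however, a genuine error in your last paragraph. From $\sigma_m(L)=0$ you conclude that the local representing matrices lie in $\mathrm{Mat}(S^{m-1}_0(\phi_i(U_i)))$ and hence that $L\in\mathrm{PDiff}_{m-1}(E,F)$. This does not follow. Vanishing of the $m$-symbol gives, by part (2) of the definition of $S^m(U)$, only $p-\psi\,\sigma_m(p)=p\in\tilde{S}^{m-1}_0(U)$; membership in $S^{m-1}_0(U)$ additionally requires the existence of the homogeneous limit $\sigma_{m-1}(p)(x,y)=\lim_{\lambda\to\infty}\lambda^{-(m-1)}p(x,\lambda y)$, which need not exist --- for instance $p(x,y)=\varphi(x)\sin\big(\log(1+|y|^2)\big)(1+|y|^2)^{(m-1)/2}$ with $\varphi$ a bump function lies in $\tilde{S}^{m-1}_0$ and has $\sigma_m(p)=0$, but $\sigma_{m-1}(p)$ does not exist. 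So $K_m(E,F)\not\subset\mathrm{PDiff}_{m-1}(E,F)$ in general, and your appeal to the bundle analogue of Proposition \ref{adjoint2} is applied to a class the kernel does not sit in. The repair is standard and is what Wells (hence the paper) actually does: for $p\in\tilde{S}^{m-1}_0(U)$ the canonical operator $L(p)$ is already an operator of order $m-1$ (this is the fact recorded at the end of Appendix \ref{appendix2}, citing Wells, Chapter 4, Theorem 3.4), so each local piece extends continuously $W^s_\mathcal{C}\rightarrow W^{s-(m-1)}_\mathcal{C}$; compactness of $X$ then permits a finite cover and a partition-of-unity sum, giving $K_m(E,F)\subset\mathrm{OP}_{m-1}(E,F)$ directly, without ever passing through $\mathrm{PDiff}_{m-1}(E,F)$.
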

\begin{proof}
The proof follows as in \cite[Chapter 4, Section 3, Theorem 3.16]{ROW} using smooth partition of unity.
\end{proof}

\subsection*{Adjoint operator}\label{subsecad}
We henceforth consider d-complex manifold to be compact, although many of the results we will discuss also 
hold for non-compact $d$-complex manifolds. Let $E$ be a smooth $d$-Hermitian vector bundle ($d$-complex 
vector bundle with $d$-Hermitian metric) on a $d$-Hermitian manifold $(X,\dholos{X})$ ($d$-complex manifold 
with $d$-Hermitian metric) with $d$-holomorphic atals $\mathcal{U}=\{(U_i,\phi_i)\}_{i\in I}$. 

Recall the constructio of line bundle $\mathcal{L}$ as in Subsection \ref{subsec2.1} with same trivializing cover as the 
$d$-holomorphic atlas $\mathcal{U}$ on $X$, with the cocycle map
\[
    \cocycle{j}{i}{\mathcal{L}}(x) = \left\{\begin{array}{lr}
        1, & \text{if transition map } \trans{j}{i} \text{ is holomorphic};\vspace{7pt}\\
        -1, & \text{if transition map } \trans{j}{i} \text{ is anti-holomorphic}.
        \end{array}\right.\quad(\text{for }x\in U_i\cap U_j)
  \] 

Given a Riemannian metric on the smooth manifold associated to  $(X,\dholos{X})$, we have a smooth 
measure (respectively, smooth measure taking value in line bundle $\mathcal{L}$), denoted by 
$\star 1$ (respectively, $\star 1_\mathcal{L}$).

Let $\mathcal{A}^0$ $(\text{respectively, }\mathcal{A}^0(\mathcal{L}))$ be the sheaf of smooth functions 
(respectively, sheaf of smooth functions taking values in $\mathcal{L}$) on the smooth manifold associated 
to $(X,\dholos{X})$. Let $\mathcal{A}^0_\mathcal{C}(E)$ be the sheaf of smooth sections of smooth $d$-complex 
bundle $E$ on the $d$-complex manifold $(X,\dholos{X})$. For $\alpha,\beta\in \mathcal{A}^0_\mathcal{C}(E)(X)$, 
we have $\langle\alpha,\beta\rangle_E$, a $d$-complex valued smooth function on $(X,\dholos{X})$, which we 
can write as $\langle\alpha,\beta\rangle_{E,x}=A_{\alpha\beta,1}(x)+iA_{\alpha\beta,2}(x)$, where 
$A_{\alpha\beta,1}(x)\in \mathcal{A}^0(X)$ and $A_{\alpha\beta,2}(x)\in \mathcal{A}^0(\mathcal{L})(X)$. 
Using the measures $\star 1_\mathcal{L}$ and $\star 1$, we define an inner product $(\, , \,)$ on 
$\mathcal{A}^0_\mathcal{C}(E)(X)$ such that for $\alpha,\beta\in \mathcal{A}^0(X)$
$$
(\alpha,\beta)=\int_X A_{\alpha\beta,1} (\star 1)+\int_X A_{\alpha\beta,2} (\star 1_\mathcal{L}).
$$
Let $||\alpha||_{L^2}=(\alpha,\alpha)^{\frac{1}{2}}$ be the respective $L^2$-norm of 
$\alpha\in \mathcal{A}_\mathcal{C}^0(E)(X)$ and $W^0_\mathcal{C}(E)(X)$ be the completion of 
$\mathcal{A}_\mathcal{C}^0(E)(X)$ with respect to this $L^2$-norm. The space $W^0_\mathcal{C}(E)(X)$ 
contains $\mathcal{A}_\mathcal{C}^0(E)(X)$ along with some more global sections. Although, we have a 
sequence of smaller Hilbert spaces $\{W^s_\mathcal{C}(E)\}_{s\in \mathbb{N}}$ containing 
$\mathcal{A}_\mathcal{C}^0(E)(X)$ (see appendix \ref{appendix1}).

\begin{definition}\rm{
Let $E,F$ be $d$-complex vector bundles with $d$-Hermitian metric on the $d$-complex manifold 
$(X,\dholos{X})$ and $L\in Hom_\mathcal{C}(E,F)$, $S\in Hom_\mathcal{C}(F,E)$. The element $S$ 
is called adjoint of $L$ if 
$$
\langle Lf,g \rangle=\langle f,Sg \rangle
$$
for all $f\in \mathcal{A}^0_\mathcal{C}(E)$ and $g\in \mathcal{A}^0_\mathcal{C}(F)$.
}
\end{definition}

\begin{remark}\rm{
If the adjoint of an element $L\in Hom_\mathcal{C}(E, F)$ exists, then it is unique, usually denoted by $L^\star$.
}
\end{remark}

\begin{proposition}\label{adjoint}
If $L\in \mathrm{Diff}_m(E,F)$, then there exist an adjoint $L^\star$ and moreover, 
$L^\star\in \mathrm{Diff}_m(F,E)$.
\end{proposition}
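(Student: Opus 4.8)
The plan is to construct $L^\star$ chart by chart via integration by parts, and then to glue the local pieces into a global operator by exploiting the uniqueness of the adjoint and the compatibility conditions \eqref{diffcompati} and \eqref{hermicondi}. Compactness of $X$ enters throughout to guarantee that the $L^2$-pairings are finite, that the partition of unity used for gluing is finite, and that integration by parts produces no boundary contributions.

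First I would fix the $d$-holomorphic atlas $\mathcal{U}=\{(U_i,\phi_i)\}_{i\in I}$ together with local trivializations of $E$ and $F$ over each $U_i$, so that $L_{U_i}$ is represented by a matrix of scalar differential operators $L_{U_i}=\big[\sum_{|\alpha|\le m} a^{rs}_\alpha D^\alpha\big]$ in the local coordinates. Using the $d$-Hermitian metrics on $E$ and $F$ together with the measures $\star 1$ and $\star 1_\mathcal{L}$ to form the local $L^2$-inner products $(\,,\,)_{U_i}$ on $E(U_i)$ and $F(U_i)$, the classical integration-by-parts formula (as in \cite[Chapter 4]{ROW}) produces, for compactly supported sections, a formal adjoint that is again a matrix of order-$m$ differential operators. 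This yields a local operator $L_{U_i}^\star\in\mathrm{Diff}_m(F|_{U_i},E|_{U_i})$ characterized by $(L_{U_i}f,g)_{U_i}=(f,L_{U_i}^\star g)_{U_i}$ for all compactly supported $f\in E(U_i)$, $g\in F(U_i)$. Since the local adjoint is determined uniquely by this identity, each $L_{U_i}^\star$ is well defined.

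The main step, and the principal obstacle, is to show that the family $\{L_{U_i}^\star\}_{i\in I}$ satisfies the compatibility condition \eqref{diffcompati} for morphisms in $Hom_\mathcal{C}(F,E)$, so that it defines a genuine global operator. On an overlap $U_i\cap U_j$ where $\trans{j}{i}$ is holomorphic, both the operator relation \eqref{diffcompati} for $L$ and the metric compatibility \eqref{hermicondi} transform without conjugation, so $L_{U_i}^\star$ and $L_{U_j}^\star$ satisfy the local adjoint identity with respect to the same transported inner product, and uniqueness forces $L_{U_j}^\star(g_{U_j})=L_{U_i}^\star(g_{U_i})$. Where $\trans{j}{i}$ is anti-holomorphic, both \eqref{diffcompati} and \eqref{hermicondi} carry a complex conjugation, and the subtlety is that conjugation is an \emph{anti}-linear isometry of the underlying $L^2$-structures: it exchanges the two arguments of the Hermitian pairing and flips the sign of the imaginary part integrated against $\star 1_\mathcal{L}$, which is precisely the sign reversal produced by $\mathcal{L}$ along the anti-holomorphic transition. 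Tracking these conjugations, the adjoint of the conjugated local expression is the conjugate of $L_{U_i}^\star$, matching the anti-holomorphic branch of \eqref{diffcompati}. Hence the family glues to a well-defined $L^\star=\{L_{U_i}^\star\}_{i\in I}\in\mathrm{Diff}_m(F,E)$.

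Finally I would upgrade the local identities to the global one. Choosing a smooth partition of unity $\{\rho_i\}$ subordinate to the locally finite cover $\{U_i\}$, I write arbitrary global sections $f\in\mathcal{A}^0_\mathcal{C}(E)(X)$ and $g\in\mathcal{A}^0_\mathcal{C}(F)(X)$ as finite sums of compactly supported pieces $\rho_i f$, apply $(L_{U_i}(\rho_i f),g)_{U_i}=(\rho_i f,L_{U_i}^\star g)_{U_i}$ in each chart, and sum over the finitely many indices. Since $X$ is compact and without boundary, no boundary terms appear and all integrals converge, giving $\langle Lf,g\rangle=\langle f,L^\star g\rangle$ for the global inner product. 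By the uniqueness remark preceding the statement, $L^\star$ is the adjoint of $L$, and by construction $L^\star\in\mathrm{Diff}_m(F,E)$, as required.
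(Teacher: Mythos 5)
Your proposal is correct and takes essentially the same route as the paper: construct the local formal adjoints $L_{U_i}^\star$ from the classical theory in each chart, then use uniqueness of the local adjoint together with the conjugation behavior of $L$ and of the $d$-Hermitian pairing on anti-holomorphic overlaps to show the family $\{L_{U_i}^\star\}_{i\in I}$ satisfies the required compatibility condition and hence defines $L^\star\in\mathrm{Diff}_m(F,E)$. Your explicit partition-of-unity verification of the global identity $\langle Lf,g\rangle=\langle f,L^\star g\rangle$ merely spells out a step the paper leaves implicit, so the two arguments coincide in substance.
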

\begin{proof}
Let $L=\left\lbrace L_{U_i}\right\rbrace_{i\in I}\in \mathrm{Diff_m(E, F)}$ with respect to some atlas 
$\mathcal{U}=\{(U_i,\phi_i)\}_{i\in I}$, be differential operator of order $k$. Using the inner product 
on $\mathcal{A}^0_\mathcal{C}(F)(U_i)$, we can describe $L_{U_i}^\star$ uniquely, for each $i\in I$
(see \cite[Chapter 4, Proposition 2.8]{ROW}).

For $\xi\in \mathcal{A}_\mathcal{C}^0(E)(X)$ and $\eta \in \mathcal{A}_\mathcal{C}^0(F)(X)$, we have
\[
    \langle L_{U_j}(\xi_{U_j}),\eta_{U_j}\rangle = \left\{\begin{array}{lr}
        \langle L_{U_i}(\xi_{U_i}),\eta_{U_i}\rangle, & \text{if } \trans{j}{i} \text{ is holomorphic};\vspace{7pt}\\
        \overline{\langle L_{U_i}(\xi_{U_i}),\eta_{U_i}\rangle}, & \text{if } \trans{j}{i} \text{ is anti-holomorphic},
        \end{array}\right.
  \]
and by uniqueness of formal adjoint operator $L_{U_i}^\star$, we have
\begin{equation}\label{adjeq}
L^\star_{U_j}(\eta_{U_j}) = \left\{\begin{array}{lr}
L^\star_{U_i}(\eta_{U_i}), & \text{if } \trans{j}{i} \text{ is holomorphic};\vspace{7pt}\\
\overline{L^\star_{U_i}(\eta_{U_i})}=(\overline{L}_{U_i})^\star(\overline{\eta}_{U_i}), & 
\text{if } \trans{j}{i} \text{ is anti-holomorphic}.
        \end{array}\right.
\end{equation}
Thus, we get a family $\{L_{U_i}^\star\}_{i\in I}$ with respect to the atlas 
$\mathcal{U}=\{(U_i,\phi_i\}_{i\in I}$, satisfying the condition \eqref{adjeq}. This proves that 
$L^\star = \{L_{U_i}^\star\}_{i\in I} \in \mathrm{Diff}_m(F,E)$.
\end{proof}

\begin{proposition}\label{adjoint1}
If $L\in \mathrm{OP}_m(E,F)$ {\rm (see Definition \ref{diffOP})}, then there exist an adjoint 
$L^\star\in \mathrm{Hom}_\mathcal{C}(F,E)$ and moreover, $L^\star\in \mathrm{OP}_m(F,E)$ with the 
extension
$$
(L^\star)_s:W^{s-m}_\mathcal{C}(F)(X)\rightarrow W^s_\mathcal{C}(E)(X)
$$
given by the adjoint map,
$$
(L_{s-m})^\star:W^{s-m}_\mathcal{C}(F)(X)\rightarrow W^s_\mathcal{C}(E)(X).
$$
\end{proposition}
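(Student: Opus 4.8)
The plan is to obtain $L^\star$ from the formal adjoint already available for differential operators, and then to identify its Sobolev extension with the functional-analytic adjoint of the extension of $L$. By the definition of $\mathrm{OP}_m$ (see Definition \ref{diffOP}), the operator $L$ is an order-$m$ differential operator admitting continuous extensions to the Sobolev completions, so in particular $L\in \mathrm{Diff}_m(E,F)$. Proposition \ref{adjoint} then supplies a unique formal adjoint $L^\star\in \mathrm{Diff}_m(F,E)$ satisfying
$$
(Lf,g)=(f,L^\star g)\qquad\text{for all }f\in \mathcal{A}^0_\mathcal{C}(E)(X)\text{ and }g\in \mathcal{A}^0_\mathcal{C}(F)(X),
$$
the inner products being the $L^2$-products defined above, with the local adjoints $\{L^\star_{U_i}\}_{i\in I}$ gluing in accordance with the $d$-complex compatibility \eqref{adjeq} (the conjugation twist on anti-holomorphic charts). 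Because $X$ is compact, every order-$m$ differential operator lies in $\mathrm{OP}_m$, since $\mathrm{Diff}_m(F,E)\subset \mathrm{PDiff}_m(F,E)\subset \mathrm{OP}_m(F,E)$ by Proposition \ref{adjoint2} together with Theorem \ref{symbl}; hence $L^\star\in \mathrm{OP}_m(F,E)$ and carries continuous Sobolev extensions $(L^\star)_s$.

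Next I would identify $(L^\star)_s$ with the adjoint $(L_{s-m})^\star$ of the bounded extension of $L$. The two inputs are the density of the smooth sections $\mathcal{A}^0_\mathcal{C}(F)(X)$ in each completion $W^{s-m}_\mathcal{C}(F)(X)$, and the $L^2$-duality pairing between $W^s_\mathcal{C}$ and $W^{-s}_\mathcal{C}$ valid on a compact manifold (see Appendix \ref{appendix1}). On the dense subspace of smooth sections the defining identity $(Lf,g)=(f,L^\star g)$ says precisely that $L^\star g$ agrees with the image of $g$ under the adjoint map $(L_{s-m})^\star$; thus $(L^\star)_s$ and $(L_{s-m})^\star$ coincide on $\mathcal{A}^0_\mathcal{C}(F)(X)$.

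I would finish by extending this equality by continuity: both $(L^\star)_s$ and $(L_{s-m})^\star$ are bounded operators $W^{s-m}_\mathcal{C}(F)(X)\to W^s_\mathcal{C}(E)(X)$ agreeing on a dense subspace, so by uniqueness of continuous extension they are equal on the whole space. This simultaneously confirms the boundedness of $(L^\star)_s$ and the asserted formula.

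The step I expect to be the main obstacle is reconciling the two notions of adjoint: the formal ($L^2$) adjoint $L^\star$, defined through the $W^0$-inner product, versus the functional-analytic adjoint $(L_{s-m})^\star$ of a bounded map between Sobolev spaces. Matching them rigorously requires the duality identification $W^{-s}_\mathcal{C}(E)(X)\cong \big(W^s_\mathcal{C}(E)(X)\big)'$ from the appendix and careful bookkeeping of the Sobolev indices, all while keeping the conjugation coming from anti-holomorphic transitions consistent so that every pairing and adjoint is taken in the intrinsic $d$-complex sense.
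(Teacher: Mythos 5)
Your opening reduction is where the proof breaks: you read Definition \ref{diffOP} as saying that an element of $\mathrm{OP}_m(E,F)$ is an order-$m$ \emph{differential} operator with continuous Sobolev extensions, and conclude $L\in\mathrm{Diff}_m(E,F)$. The definition says no such thing: $\mathrm{OP}_m(E,F)$ consists of arbitrary elements of $\mathrm{Hom}_\mathcal{C}(E,F)$ admitting continuous extensions $W^s_\mathcal{C}(E)(X)\rightarrow W^{s-m}_\mathcal{C}(F)(X)$, and all the containments in the paper run the other way --- $\mathrm{Diff}_m(E,F)\subset\mathrm{OP}_m(E,F)$ (stated right after Definition \ref{diffOP}), $\mathrm{PDiff}_m(X)\subset\mathrm{OP}_m(X)$ (Proposition \ref{adjoint2}), $K_m(E,F)\subset\mathrm{OP}_{m-1}(E,F)$ (Theorem \ref{symbl}), and the Green's operator $G_L\in\mathrm{OP}_0(E)$ of Theorem \ref{orthodeco}, which is certainly not a differential operator. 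For a general $L\in\mathrm{OP}_m(E,F)$ you therefore cannot invoke Proposition \ref{adjoint}, and your first step silently assumes the crux of the statement: that an adjoint defined on smooth sections and \emph{taking values in smooth sections} exists at all. The subsequent chain $\mathrm{Diff}_m\subset\mathrm{PDiff}_m\subset\mathrm{OP}_m$ and the appeal to compactness are then propping up a hypothesis you were never entitled to.

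The paper's proof, which follows \cite[Chapter 4, Section 2, Proposition 2.1]{ROW}, runs your second and third paragraphs in the opposite direction, and this reversal is exactly what repairs the argument: the candidate is the functional-analytic adjoint $(L_{s-m})^\star:W^{s-m}_\mathcal{C}(F)(X)\rightarrow W^s_\mathcal{C}(E)(X)$ of the bounded extension, which exists for every $s\in\mathbb{Z}$ by duality of the Sobolev completions alone, with no differential structure required. Uniqueness of adjoints together with density of $\mathcal{A}^0_\mathcal{C}(F)(X)$ in each completion shows these maps are compatible as $s$ varies; hence for smooth $g$ the section $(L_{s-m})^\star g$ lies in $W^s_\mathcal{C}(E)(X)$ for \emph{every} $s$, so it is smooth by the Sobolev embedding, Proposition \ref{Sobolev}. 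Restricting to smooth sections then \emph{defines} $L^\star$ with $(Lf,g)=(f,L^\star g)$, yields $L^\star\in\mathrm{OP}_m(F,E)$ with the asserted extensions by construction, and the compatibility \eqref{adjeq} across anti-holomorphic transitions follows from uniqueness, as you anticipate. So your density-plus-duality mechanism is the right one and matches the paper; the proof is fixed by using it to construct $L^\star$, rather than importing $L^\star$ from Proposition \ref{adjoint}, which applies only to $\mathrm{Diff}_m(E,F)$.
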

\begin{proof}
The candidate $(L_{s-m})^\star:W^{s-m}_\mathcal{C}(F)(X)\rightarrow W^s_\mathcal{C}(E)(X)$ 
(for each $s\in \mathbb{Z}$) exists using the Propositon \ref{adjoint} and we have the proof of the 
theorem following \cite[Chapter 4, Section 2, Proposition 2.1]{ROW}, using the uniqueness of adjoint
and the Proposition \ref{Sobolev}.
\end{proof}

\begin{theorem}\label{symblprop}
Let $E,F$ and $G$ be $d$-complex vector bundles over a $d$-complex manifold $(X,\dholos{X})$. Then
\begin{enumerate}
\item If $Q\in \mathrm{PDiff}_r(E,F)$ and $P\in \mathrm{PDiff}_s(F,G)$, we have the composition $P\circ Q\in \mathrm{PDiff}_{r+s}(E,G)$ as operators, moreover
$$
\sigma_{r+s}(P\circ Q)=\sigma_s(P).\sigma_r(Q).
$$
\item If $P\in \mathrm{PDiff_m(E, F)}$, then $P^\star$, the adjoint of $P$, exists and $P^\star\in \mathrm{PDiff}_m(F,E)$, moreover
$$
\sigma_m(P^\star)=\sigma_m(P)^\star.
$$
\end{enumerate}
\end{theorem}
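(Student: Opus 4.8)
The plan is to reduce both assertions to the corresponding statements about ordinary (matrix-valued) pseudo-differential operators on open subsets of $\mathbb{R}^{2n}$, and then to verify that the resulting families respect the gluing prescriptions built into the definitions of $\mathrm{PDiff}_m$ and $\mathrm{Smbl}_m$. Writing $P=\{P_{U_i}\}_{i\in I}$ and $Q=\{Q_{U_i}\}_{i\in I}$ with respect to the $d$-holomorphic atlas $\mathcal{U}=\{(U_i,\phi_i)\}_{i\in I}$, each $P_{U_i}$ and $Q_{U_i}$ is, after transport by $\phi_i$, a matrix of canonical pseudo-differential operators, so the classical calculus \cite[Chapter 4, Section 3]{ROW} applies chart by chart. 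The only genuinely new content is the bookkeeping forced by the dichotomy ``$\trans{j}{i}$ holomorphic'' versus ``$\trans{j}{i}$ anti-holomorphic,'' where the latter introduces a complex conjugation.

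For part (1), in each chart the composition $P_{U_i}\circ Q_{U_i}$ is a pseudo-differential operator of order $r+s$ whose local symbol is the matrix product $\sigma_s(P_{U_i})\cdot\sigma_r(Q_{U_i})$, by the classical composition theorem. To see that $\{P_{U_i}\circ Q_{U_i}\}_{i\in I}$ is a bona fide element of $\mathrm{PDiff}_{r+s}(E,G)$, I verify the compatibility \eqref{diffcompati}. When $\trans{j}{i}$ is holomorphic this is immediate, since $P_{U_j}=P_{U_i}$ and $Q_{U_j}=Q_{U_i}$ on the overlap. When $\trans{j}{i}$ is anti-holomorphic, set $\eta=Q(s)$; the compatibility of $Q$ gives $\eta_{U_j}=\overline{\eta_{U_i}}$, and applying the compatibility of $P$ to the section $\eta$ yields $P_{U_j}(\eta_{U_j})=\overline{P_{U_i}(\eta_{U_i})}$, that is, $(P\circ Q)_{U_j}(s_{U_j})=\overline{(P\circ Q)_{U_i}(s_{U_i})}$, which is precisely \eqref{diffcompati} for $P\circ Q$. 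The symbol identity $\sigma_{r+s}(P\circ Q)=\sigma_s(P)\cdot\sigma_r(Q)$ holds chartwise by the classical formula and assembles into a global identity in $\mathrm{Smbl}_{r+s}(E,G)$ through Proposition \ref{sigmaprop1}; that the pointwise product of two compatible symbols is again compatible follows from the multiplicativity of conjugation, $\overline{A}\circ\overline{B}=\overline{A\circ B}$, applied to the anti-holomorphic branch of the compatibility condition defining $\mathrm{Smbl}_m$.

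For part (2), in each chart the formal adjoint $P_{U_i}^\star$ exists and is a pseudo-differential operator of order $m$ with $\sigma_m(P_{U_i}^\star)=\sigma_m(P_{U_i})^\star$, again by the classical theory. To assemble $\{P_{U_i}^\star\}_{i\in I}$ into an element of $\mathrm{PDiff}_m(F,E)$ I would run the argument of Proposition \ref{adjoint}, now with the local adjoints furnished by the classical calculus: the $L^2$ inner product $(\,,\,)$ on $\mathcal{A}^0_\mathcal{C}(\,\cdot\,)(X)$ is chart-independent up to the same conjugation, so the uniqueness of the local formal adjoint forces exactly the compatibility \eqref{adjeq}, namely that $P^\star_{U_j}(\eta_{U_j})$ equals $P^\star_{U_i}(\eta_{U_i})$ or its conjugate according as $\trans{j}{i}$ is holomorphic or anti-holomorphic. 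The symbol identity $\sigma_m(P^\star)=\sigma_m(P)^\star$ then holds chartwise, and taking the conjugate-transpose respects the $\mathrm{Smbl}_m$ compatibility because $(\overline{A})^\star=\overline{A^\star}$.

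The step I expect to be the main obstacle is the anti-holomorphic case, where a single complex conjugation must be tracked simultaneously through (i) the gluing of the operators, (ii) the gluing of the local symbols via the conjugated transition rule of Theorem \ref{adjoint3}, and (iii) the Hermitian inner product defining the adjoint. Concretely, one must confirm that conjugation commutes with the substitution $\eta\mapsto -[D(\trans{i}{j})^t]^{-1}\eta$ appearing in the anti-holomorphic branch of Theorem \ref{adjoint3}, and that the pointwise product and the conjugate-transpose of symbols are compatible with this rule. Once this interaction of conjugation with the transition substitution is pinned down, the two elementary algebraic facts $\overline{A\circ B}=\overline{A}\circ\overline{B}$ and $(\overline{A})^\star=\overline{A^\star}$ let the classical local statements glue without further difficulty.
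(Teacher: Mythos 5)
Your proposal is correct and follows essentially the same route as the paper, whose proof simply invokes the classical calculus of \cite[Theorem 3.17]{ROW} together with Theorems \ref{adjoint1}, \ref{adjoint3} and \ref{adjoint2} to handle the chartwise statements and the holomorphic/anti-holomorphic gluing. Your chart-by-chart verification of the compatibility \eqref{diffcompati} for $P\circ Q$, the use of uniqueness of the local formal adjoint as in Proposition \ref{adjoint}, and your identification of $\overline{A\circ B}=\overline{A}\circ\overline{B}$, $(\overline{A})^\star=\overline{A^\star}$, and the substitution $\eta\mapsto -\left[D(\trans{i}{j})^t\right]^{-1}\eta$ from Theorem \ref{adjoint3} as the points needing care make explicit exactly the bookkeeping the paper leaves implicit.
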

\begin{proof}
The proof follows as in \cite[Theorem 3.17]{ROW} using the adjoint property of differential operators, and the Theorems \ref{adjoint1}, \ref{adjoint3} and \ref{adjoint2}. 
\end{proof}

\subsection{Elliptic complexes}
We can indeed develop the theory of parametrixes for elliptic differential operators on $d$-complex bundles 
over $d$-complex manifolds. This can be achieved by utilizing results from functional analysis, 
Theorem \ref{symbl}, \ref{symblprop}, and following a similar line of argument as presented 
in \cite[Chapter 4, \S 4]{ROW}, detailed in Appendix \ref{appendix3}.

The fundamental decomposition theorem for elliptic complexes (refer to Theorem \ref{ellipticdec}) is crucial 
for describing both the Hodge decomposition for the Dolbeault complex associated with the $d$-complex manifold 
$(X,\dholos{X})$ and the de Rham complex for the smooth manifold $X$.

\begin{definition}\rm{
For a given sequence of $d$-complex vector bundles $(E_i)_{(0\leq i\leq N)}$ and differential operators $L_i:\mathcal{A}^0_\mathcal{C}(E_i)(X)\rightarrow \mathcal{A}^0_\mathcal{C}(E_{j+1})(X)(0\leq i\leq N-1)$, the sequence
$$
\mathcal{A}^0_\mathcal{C}(E_0)(X)\xrightarrow{L_0}\mathcal{A}^0_\mathcal{C}(E_1)(X)\xrightarrow{L_1}\mathcal{A}^0_\mathcal{C}(E_2)(X)\xrightarrow{L_2}\dots\xrightarrow{L_{N-1}}\mathcal{A}^0_\mathcal{C}(E_N)(X)
$$  
is called a complex iff $L_j\circ L_{j-1}\equiv 0$ $(j=1,\dots, N-1)$. The complex is denoted by $E_\bullet$.
}
\end{definition}
\begin{definition}\rm{
The complex 
$$
E_\bullet:\mathcal{A}^0_\mathcal{C}(E_0)(X)\xrightarrow{L_0}\mathcal{A}^0_\mathcal{C}(E_1)(X)\xrightarrow{L_1}\mathcal{A}^0_\mathcal{C}(E_2)(X)\xrightarrow{L_2}\dots\xrightarrow{L_{N-1}}\mathcal{A}^0_\mathcal{C}(E_N)(X)
$$ 
is called an elliptic complex if the following associated symbol sequence is exact
$$
0\rightarrow \pi^\star E_0\xrightarrow{\sigma(L_0)}\pi^\star E_1\xrightarrow{\sigma(L_1)}\dots\xrightarrow{\sigma(L_{N-1})}\pi^\star E_N\rightarrow 0.
$$ 
}
\end{definition}
\begin{remark}\rm{
Considering the de-Rham complex
$$
0\rightarrow\mathcal{A}^0_\mathcal{C}(X)\xrightarrow{d_\mathcal{C}}\mathcal{A}^1_\mathcal{C}(X)\xrightarrow{d_\mathcal{C}}\mathcal{A}^2_\mathcal{C}(X)\xrightarrow{d_\mathcal{C}}\dots
$$
We have the associated 1-symbol sequence
$$
0\rightarrow\wedge^0{T^\star}^\mathcal{C}_X(X)\xrightarrow{\;\sigma_1(d_\mathcal{C})(\nu)\;}\wedge^1{T^\star}^\mathcal{C}_X(X)\xrightarrow{\;\sigma_1(d_\mathcal{C})(\nu)\;}\wedge^2{T^\star}^\mathcal{C}_X(X)\xrightarrow{\;\sigma_1(d_\mathcal{C})(\nu)\;}\dots.
$$
where $\nu\in \big({T^\star}^\mathcal{C}_X\otimes \mathcal{L}\big)(X)$.

Let $f=\{f_{U_i}\}_{i\in I}\in \big(\mathcal{C}\otimes \mathcal{L}\big)(X)$ be a smooth section such that 
$d(f_{U_i})=\nu_{U_i}$, and $s=\{s_{U_i}\}_{i\in I}\in \wedge^k{T^\star}^\mathcal{C}_X(X)$. Then, we have
\begin{align*}
\sigma_1(d_\mathcal{C})_{U_i}(\nu_{U_i})(s_{U_i})(x)&=d\Big(\sqrt{-1}\big(f_{U_i}-f_{U_i}(x)\big)s_{U_i}\Big)(x)\\
&=\sqrt{-1}d\big(f_{U_i}-f_{U_i}(x)\big)(x)\wedge s_{U_i}(x)+\sqrt{-1}\big(f_{U_i}-f_{U_i}(x)\big)(x)(d(s_{U_i})(x))\\
&=\sqrt{-1}\nu_{U_i}(x)\wedge s_{U_i}(x)
\end{align*} 
and 
\begin{align*}
\Big(\sigma_1(d_\mathcal{C})_{U_i}(\nu_{U_i})\circ \sigma_1(d_\mathcal{C})_{U_i}(\nu_{U_i})\Big)s_{U_i}(x)&=\sigma_1(d_\mathcal{C})_{U_i}(\nu_{U_i})\sqrt{-1}\big(d(f_{U_i})\wedge s_{U_i}\big)(x)\\
&=\sqrt{-1}\nu_{U_i}(x)\wedge i\nu_{U_i}(x)\wedge s_{U_i}(x)\\
&=0
\end{align*}
Similarly, for the Dolbeault complex
$$
0\rightarrow \mathcal{A}^{(p,0)}(X)\xrightarrow{\overline{\partial}}\mathcal{A}^{(p,1)}(X)\xrightarrow{\overline{\partial}}\mathcal{A}^{(p,2)}(X)\rightarrow\dots\quad(\text{for }0\leq p\leq n),
$$

we have the associated $1$-symbol sequence
$$
0\rightarrow\wedge^0{T^\star}^{(p,1)}(X)\xrightarrow{\;\sigma_1(\overline{\partial})(\nu)\;}\wedge^1{T^\star}^{(p,1)}(X)\xrightarrow{\;\sigma_1(\overline{\partial})(\nu)\;}\wedge^2{T^\star}^{(p,1)}(X)
\xrightarrow{\;\sigma_1(\overline{\partial})(\nu)\;}\dots.
$$
Here,
$$
\sigma_1(\overline{\partial})(\nu_{U_i})(s_{U_i})(x)=\sqrt{-1}\nu_{U_i}^{(0,1)}(x)\wedge (s_{U_i})(x)
$$
for $\nu=\{\nu_{U_i}\}_{i\in I}=\{d(f_{U_i})\}_{i\in I}\in ({T^\star}^\mathcal{C}_X\otimes \mathcal{L})(X)$ and $s=\{s_{U_i}\}_{i\in I}\in \wedge^kT^{\star(p,1)}(X))$. Also, 
\begin{align*}
\Big(\sigma_1(\overline{\partial})(\nu_{U_i})\circ \sigma_1(\overline{\partial})(\nu_{U_i})\Big)s_{U_i}(x)
&= \sigma_1(\overline{\partial})(\nu_{U_i})\sqrt{-1}\big(d(f_{U_i})^{(0,1)}\wedge s_{U_i}\big)(x)\\
&=\sqrt{-1}\nu_{U_i}^{(0,1)}(x)\wedge i\nu_{U_i}^{(0,1)}(x)\wedge s_{U_i}(x)\\
&=0
\end{align*}
Hence, the de-Rham complex \eqref{derhamcomp} as well as Dolbeault complex \eqref{dolbeaultcomp} are elliptic complex.}
\end{remark}
\begin{definition}\rm{
For a given elliptic complex $E_\bullet$, define the cohomology group,
$$
H^q(E_\bullet)=\frac{\text{Ker}\big(L_q:\mathcal{A}^0_\mathcal{C}(E_q)(X)\rightarrow \mathcal{A}^0_\mathcal{C}(E_{q+1})(X)\big)}{\text{Image}\big(L_{q-1}:\mathcal{A}^0_\mathcal{C}(E_{q-1})(X)\rightarrow \mathcal{A}^0_\mathcal{C}(E_{q})(X)\big)}\quad q=0,1,\dots,N.
$$
Where, $L_{-1}=L_N=0=E_{-1}=E_{N+1}$
}
\end{definition}
Using Theorem \ref{adjoint}, each $L_j\in \mathrm{Diff}_1(E_j,E_{j+1})$ has an adjoint $L_j^\star\in \mathrm{Diff}_{-1}(E_{j+1},E_j)$.

Define the Laplacian operator $\Delta_j=L_j^\star\circ L_j+L_{j-1}\circ L_{j-1}^\star:\mathcal{A}^0_\mathcal{C}(X,E_j)\rightarrow \mathcal{A}^0_\mathcal{C}(X,E_j)$, which is self-adjoint and elliptic of order $2k$ (see \cite[Chapter 4, Section 5]{ROW}).

Denote by, $\mathcal{H}(\mathcal{A}^0_\mathcal{C}(E_j))=\mathcal{H}_{\Delta_j}(E_j)=Ker(\Delta_j)\subset \mathcal{A}^0_\mathcal{C}(X,E_j)$, the collection of $\Delta_j$-harmonic sections. By Theorem \ref{orthodeco}, we have a map
$$
G_j=G_{\Delta_j}:\mathcal{A}^0_\mathcal{C}(E_j)(X)\rightarrow \mathcal{A}^0_\mathcal{C}(E_j)(X),
$$
and the projection map
$$
H_j:\mathcal{A}^0_\mathcal{C}(E_j)(X)\rightarrow \mathcal{H}(\mathcal{A}^0_\mathcal{C}(E_j)(X))\subset \mathcal{A}^0_\mathcal{C}(E_j)(X).
$$

Using Theorem \ref{orthodeco} and the arguments as in \cite[Theorem 5.2]{ROW}, we have
\begin{theorem}\label{ellipticdec}\cite[Theorem 5.2]{ROW}
For an elliptic complex $(E_j,L_j)_{j\in I}$, we have
\begin{enumerate}
\item an orthogonal decomposition,
$$
\mathcal{A}^0_\mathcal{C}(E_j)(X)=L_j(L_j^\star G_j(\mathcal{A}^0_\mathcal{C}(E_j)(X)))\oplus \mathcal{H}_{\Delta_j}(E_j)\oplus L_j^\star (L_jG_j(\mathcal{A}^0_\mathcal{C}(E_j)(X)))
$$
\item the following relations hold:
\begin{itemize}
\item $G_j\Delta_j+H_j=\Delta_jG_j+H_j=id|_{\Gamma(X,E_j)}$
\item $H_jG_j=G_jH_j=H_j\Delta_j=\Delta_jH_j=0$
\item $L_j\Delta_j=\Delta_jL_j; L^\star_j\Delta_j=\Delta_jL^\star_j$
\end{itemize}
\item $dim_\mathbb{R}\mathcal{H}_{\Delta_j}(E_j)<\infty$ and there is a canonical isomorphism $\mathcal{H}_{\Delta_j}(E_j)\simeq H^j(E)$.
\end{enumerate}
\end{theorem}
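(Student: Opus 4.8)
The plan is to reduce the entire statement to the single self-adjoint elliptic operator $\Delta_j$ and invoke Theorem \ref{orthodeco}. Since $X$ is compact and $\Delta_j = L_j^\star\circ L_j + L_{j-1}\circ L_{j-1}^\star$ is self-adjoint and elliptic, Theorem \ref{orthodeco} directly furnishes the finite-dimensional harmonic space $\mathcal{H}_{\Delta_j}(E_j)=\ker\Delta_j$, the Green's operator $G_j$, and the orthogonal projection $H_j$ onto $\mathcal{H}_{\Delta_j}(E_j)$, together with $\Delta_jG_j+H_j=G_j\Delta_j+H_j=\mathrm{id}$ and $H_jG_j=G_jH_j=0$. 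This already yields the finiteness in part (3) and several identities in part (2); the remaining ones $H_j\Delta_j=\Delta_jH_j=0$ follow because $H_j$ lands in $\ker\Delta_j$ and the image of the self-adjoint operator $\Delta_j$ is orthogonal to its kernel. The essential preliminary observation, valid because $X$ is compact, is that $\langle\Delta_j\alpha,\alpha\rangle=\|L_j\alpha\|^2+\|L_{j-1}^\star\alpha\|^2$, so $\alpha$ is $\Delta_j$-harmonic if and only if $L_j\alpha=0$ and $L_{j-1}^\star\alpha=0$ simultaneously.

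Next I would establish the intertwining relations $L_j\Delta_j=\Delta_{j+1}L_j$ and $L_j^\star\Delta_{j+1}=\Delta_jL_j^\star$ (the intertwining form of the stated $L_j\Delta_j=\Delta_jL_j$). Using the complex condition $L_{j+1}\circ L_j\equiv 0$ and its adjoint $L_j^\star\circ L_{j+1}^\star\equiv 0$, one computes $L_j\Delta_j=L_jL_j^\star L_j=\Delta_{j+1}L_j$ directly, since the cross terms $L_jL_{j-1}L_{j-1}^\star$ and $L_{j+1}^\star L_{j+1}L_j$ vanish; the adjoint statement follows by taking $\star$. From this intertwining, the fact that $L_j$ then carries harmonic sections to harmonic sections, and the uniqueness of the Green's operator in Theorem \ref{orthodeco}, I would deduce the crucial commutations $L_jG_j=G_{j+1}L_j$ and $L_j^\star G_{j+1}=G_jL_j^\star$ as a formal consequence of $\Delta G+H=\mathrm{id}$.

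With these in hand, part (1) follows by applying $\mathrm{id}=H_j+\Delta_jG_j$ to an arbitrary $\alpha\in\mathcal{A}^0_\mathcal{C}(E_j)(X)$ and expanding $\Delta_j$:
\begin{equation*}
\alpha=H_j\alpha+L_{j-1}\bigl(L_{j-1}^\star G_j\alpha\bigr)+L_j^\star\bigl(L_jG_j\alpha\bigr).
\end{equation*}
The three summands lie in $\mathcal{H}_{\Delta_j}(E_j)$, $\mathrm{Im}(L_{j-1})$, and $\mathrm{Im}(L_j^\star)$ respectively. Their pairwise orthogonality is immediate from the harmonic characterization above: for harmonic $h$ one has $\langle h,L_{j-1}\beta\rangle=\langle L_{j-1}^\star h,\beta\rangle=0$ and $\langle h,L_j^\star\gamma\rangle=\langle L_jh,\gamma\rangle=0$, while $\langle L_{j-1}\beta,L_j^\star\gamma\rangle=\langle L_jL_{j-1}\beta,\gamma\rangle=0$ by the complex condition. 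This gives the asserted orthogonal decomposition.

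Finally, for the isomorphism in part (3) I would send a harmonic section $h$ to its class $[h]\in H^j(E_\bullet)$, which is well defined since $L_jh=0$. Injectivity holds because a harmonic exact section $h=L_{j-1}\beta$ satisfies $\|h\|^2=\langle L_{j-1}\beta,h\rangle=\langle\beta,L_{j-1}^\star h\rangle=0$, and surjectivity holds because for a cocycle $\alpha$ with $L_j\alpha=0$ the commutation gives $L_jG_j\alpha=G_{j+1}L_j\alpha=0$, so the decomposition collapses to $\alpha=H_j\alpha+L_{j-1}(L_{j-1}^\star G_j\alpha)$, whence $[\alpha]=[H_j\alpha]$ with $H_j\alpha$ harmonic. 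The entire analytic weight of the theorem, namely the existence of $G_j$ and the finiteness of $\mathcal{H}_{\Delta_j}(E_j)$, is already absorbed into Theorem \ref{orthodeco}; the main obstacle in the present argument is therefore formal but delicate, namely verifying the commutation $L_jG_j=G_{j+1}L_j$ from uniqueness of the Green's operator, since every other step is a short Hilbert-space computation once the harmonic characterization and the complex condition $L_{j+1}\circ L_j\equiv 0$ are available.
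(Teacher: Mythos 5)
Your proposal is correct and is essentially the paper's own argument: the paper proves Theorem \ref{ellipticdec} simply by invoking Theorem \ref{orthodeco} for the self-adjoint elliptic Laplacian $\Delta_j$ and deferring the remaining formal manipulations to \cite[Theorem 5.2]{ROW}, which is exactly the reduction you carry out, including the harmonic characterization $\ker\Delta_j=\ker L_j\cap\ker L_{j-1}^\star$, the intertwining $L_j\Delta_j=\Delta_{j+1}L_j$, the commutation with the Green's operator, and the induced isomorphism $\mathcal{H}_{\Delta_j}(E_j)\simeq H^j(E_\bullet)$. Your explicitly typed decomposition $\alpha=H_j\alpha+L_{j-1}\bigl(L_{j-1}^\star G_j\alpha\bigr)+L_j^\star\bigl(L_jG_j\alpha\bigr)$ also correctly resolves the index collision in the theorem's displayed formula, so no gap remains.
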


\section{Hodge decomposition theorem}\label{section4}
The principle underlying the proof of the Hodge decomposition theorem is that every de Rham cohomology 
class uniquely represents a harmonic form for an elliptic operator, namely the Laplacian. 
The K\"ahler condition plays a pivotal role in the proof and leads to a decomposition of harmonic forms 
into components of type $(p, q)$.

For a given smooth $d$-Hermitian metric on a $d$-complex manifold $(X,\dholos{X})$, we have a smooth $d$-complex $(1,1)$ form $\omega$ taking values in the line bundle $\mathcal{L}$ (the line bundle as described in Subsection \ref{subsec2.1}), which can be expressed as a family 
$$
\left\lbrace\omega_{U_l}=\frac{i}{2}\displaystyle\sum_{i_1,j_1=1}^n  h^{U_l}_{i_1j_1}dz^{i_1}_l\wedge d\overline{z}^{j_1}_l\right\rbrace_{l\in I}
$$ 
with respect to the atlas $\mathcal{U}=\{(U_i,\phi_i)\}_{i\in I}$ on $d$-complex manifold $(X,\dholos{X})$, satisfying the compatibility condition
\[
\omega_{U_{l'}}=\left\{\begin{array}{lr}
        \omega_{U_l}, & \text{ if }\trans{l}{l'}\text{ is holomorphic};\vspace{7pt}\\
        -\overline{\omega}_{U_l}, & \text{ if }\trans{l}{l'}\text{ is anti-holomorphic}.
        \end{array}\right.
\]
The smooth $d$-complex $(1,1)$ form $\omega$ is the fundamental form associated with the $d$-Hermitian metric. 
\begin{definition}\rm{
The $d$-Hermitian manifold $(X,\dholos{X})$ is called $d$-K\"{a}hler, denoted by $(X,\omega)$ iff the smooth $d$-complex $(1,1)$ form $\omega$ is $d_\mathcal{C}$-closed i.e. $d(\omega_{U_i})=0$ for each $i\in I$.
}
\end{definition}
\begin{theorem}\label{fundaform}
Let $(X,\omega)$ be a $d$-K\"ahler manifold of complex dimension $n$ with $d$-K\"ahler form $\omega=\left\{\omega_{U_l}=\frac{i}{2}\displaystyle\sum_{\alpha,\beta}h^{U_l}_{\alpha\beta}dz^\alpha_l\wedge d\overline{z}^\beta_l\right\}_{l\in I}$. Then around each point $x\in X$, we can choose co-ordinates $\phi_{l_x}=(z_{l_x}^1,z_{l_x}^2,\dots,z_{l_x}^n)$ such that $h^{U_l}_{jk}=\delta^j_k+O(|z|^2)$.
\end{theorem}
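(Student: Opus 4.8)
The plan is to reduce the statement to the classical construction of K\"ahler normal coordinates, carried out entirely within one $d$-holomorphic chart around $x$; the $d$-complex structure plays no role locally, since within a single chart $(U_{l_x},\phi_{l_x})$ the data $\left(\sum_{ij} h^{U_{l_x}}_{ij}\, dz^i_{l_x}\otimes d\overline z^j_{l_x},\ \omega_{U_{l_x}}\right)$ is an ordinary Hermitian metric together with its fundamental form, and the $d$-K\"ahler hypothesis asserts exactly that $d(\omega_{U_{l_x}})=0$ on $\phi_{l_x}(U_{l_x})$. First I would pick any $d$-holomorphic chart with $\phi_{l_x}(x)=0$ and, using that $\big(h^{U_{l_x}}_{ij}(x)\big)$ is a positive-definite Hermitian matrix, apply a constant $\mathrm{GL}_n(\mathbb{C})$ linear change of the coordinates $z_{l_x}$ to arrange $h^{U_{l_x}}_{jk}(x)=\delta^j_k$. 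Such a linear change is holomorphic, so the new chart still lies in the maximal atlas $\dholos{X}$.

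Next I would extract the first-order consequence of the K\"ahler condition. Abbreviating $h_{jk}:=h^{U_{l_x}}_{jk}$ and writing the Taylor expansion $h_{jk}(z)=\delta_{jk}+\sum_l (\partial_l h_{jk})(0)\,z^l+\sum_l (\partial_{\bar l} h_{jk})(0)\,\overline z^l+O(|z|^2)$, the equation $d\omega_{U_{l_x}}=\partial\omega_{U_{l_x}}+\overline\partial\omega_{U_{l_x}}=0$ forces, after antisymmetrizing $\partial\omega_{U_{l_x}}$ in the two holomorphic differentials, the identity $\partial_l h_{jk}=\partial_j h_{lk}$, i.e. the symmetry of the holomorphic first derivatives in the pair $(j,l)$. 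The conjugate symmetry of the antiholomorphic derivatives follows from this together with the Hermitian relation $\overline{h_{jk}}=h_{kj}$.

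The heart of the argument is then a quadratic holomorphic change of coordinates. I would set $z^m = w^m+\tfrac12\sum_{j,l} c^m_{jl}\,w^j w^l$ with $c^m_{jl}=c^m_{lj}$, compute the transformed coefficients $\tilde h_{pq}=\sum_{m,n} h_{mn}\,\partial_{w^p}z^m\,\overline{\partial_{w^q}z^n}$ to first order, and read off that the linear-in-$w$ part of $\tilde h_{pq}$ has coefficient $(\partial_l h_{pq})(0)+c^q_{pl}$. Choosing $c^q_{pl}:=-(\partial_l h_{pq})(0)$ cancels these terms, and the Hermitian symmetry then cancels the antiholomorphic terms as well, leaving $\tilde h_{jk}=\delta^j_k+O(|w|^2)$. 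The one point that must be checked --- and the place where the K\"ahler hypothesis is indispensable --- is that this $c$ is symmetric in its lower indices $(p,l)$, so that it genuinely defines a coordinate change; this is precisely the symmetry $\partial_l h_{pq}=\partial_p h_{lq}$ established in the previous step. Since $w$ is obtained from $z$ by a holomorphic polynomial, the resulting chart remains $d$-holomorphically compatible with $\dholos{X}$ (a holomorphic or anti-holomorphic transition composed with a holomorphic polynomial is again holomorphic or anti-holomorphic), so it may be taken as the desired $\phi_{l_x}$. The main obstacle is thus not analytic but organizational: arranging the first-order expansion of the three-factor product so that the required symmetry of the correction coefficients matches exactly the K\"ahler identity.
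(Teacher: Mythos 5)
Your proposal is correct and follows essentially the same route as the paper: normalize $h_{jk}(0)=\delta_{jk}$ by a linear change (the paper invokes Gram--Schmidt), extract the first-derivative symmetry $\partial_l h_{jk}=\partial_j h_{lk}$ from $d\omega_{U_{l_x}}=0$, and absorb the linear terms of the Taylor expansion by a quadratic holomorphic coordinate change whose coefficients are symmetric precisely because of that K\"ahler identity. The only cosmetic difference is that you write the substitution in the inverse direction ($z$ in terms of $w$) and transform the metric coefficients, whereas the paper sets $\tilde z^i = z^i + \tfrac12\sum_{\alpha,\beta}\partial_{z^\alpha}h_{\beta i}(0)\,z^\alpha z^\beta$ and compares $\tfrac{i}{2}\sum_i d\tilde z^i\wedge d\overline{\tilde z}^i$ with $\omega_{U_{l_x}}$ directly; your explicit check that the new chart stays in the maximal $d$-holomorphic atlas is a welcome detail the paper leaves implicit.
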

\begin{proof}
On a chart $\big(U_{l_x},\phi_{l_x}=(z_{l_x}^1,z_{l_x}^2,\dots,z_{l_x}^n)\big)$ around a point $x\in X$, we have given $\omega_{U_{l_x}}=\frac{i}{2}\displaystyle\sum_{\alpha,\beta}h^{U_{l_x}}_{\alpha\beta}dz^\alpha_{l_x}\wedge dz^\beta_{l_x}$, where $[h^{U_{l_x}}_{\alpha\beta}]$ is symmetric, positive definite matrix. Using taylor series expansion of $h^{U_{l_x}}_{\alpha\beta}$ around the point $\phi_{l_x}(x)=0$, we have
$$
h^{U_{l_x}}_{\alpha\beta}(z)=h^{U_{l_x}}_{\alpha\beta}(0)+\displaystyle\sum_i\frac{\partial h^{U_{l_x}}_{\alpha\beta}}{\partial z_{l_x}^i}(0)z_{l_x}^i+\displaystyle\sum_i\frac{\partial h^{U_{l_x}}_{\alpha\beta}}{\partial \overline{z}_{l_x}^i}(0)\overline{z}_{l_x}^i+ O(|z|^2)
$$
Using Gram-Schmidt orthonormalization, we can choose $(z_{l_x}^1,z_{l_x}^2,\dots,z_{l_x}^n)$ such that $h^{U_{l_x}}_{\alpha\beta}(0)\equiv \delta^\alpha_\beta$; hence we have
\begin{align}
h^{U_{l_x}}_{\alpha\beta}(z)&=\delta^\alpha_\beta+\displaystyle\sum_i\frac{\partial h^{U_{l_x}}_{\alpha\beta}}{\partial z_{l_x}^i}(0)z_{l_x}^i+\displaystyle\sum_i\frac{\partial h^{U_{l_x}}_{\alpha\beta}}{\partial \overline{z}_{l_x}^i}(0)\overline{z}_{l_x}^i+ O(|z|^2)\quad\text{and}\nonumber\\ 
\omega_{U_{l_x}}&=\frac{i}{2}\displaystyle\sum_{\alpha,\beta}h_{\alpha\beta}^{U_{l_x}}(z) dz^\alpha_{l_x}\wedge d\overline{z}^\beta_{l_x}\label{formomeg}\nonumber\\
&=\frac{i}{2}\displaystyle\sum_{\alpha,\beta}\left(\delta^\alpha_\beta+\displaystyle\sum_i\frac{\partial h^{U_{l_x}}_{\alpha\beta}}{\partial z_{l_x}^i}(0)z_{l_x}^i+\displaystyle\sum_i\frac{\partial h^{U_{l_x}}_{\alpha\beta}}{\partial \overline{z}_{l_x}^i}(0)\overline{z}_{l_x}^i\right) dz^\alpha_{l_x}\wedge d\overline{z}^\beta_{l_x}+O(|z|^2)
\end{align}
Since $d\omega_{U_{l_x}}=0$, we have
\begin{align}
\frac{\partial h_{\alpha i}^{U_{l_x}}}{\partial z_{l_x}^\beta}(z)&=\frac{\partial h_{\beta i}^{U_{l_x}}}{\partial z_{l_x}^\alpha}(z)\quad\text{and}\label{eq1}\\
\frac{\partial h_{\alpha i}^{U_{l_x}}}{\partial \overline{z}_{l_x}^\beta}(z)&=\frac{\partial h_{\alpha \beta}^{U_{l_x}}}{\partial \overline{z}_{l_x}^i}(z)\quad(\text{for }\alpha,\beta,i\in \{1,2,\dots,n\})\label{eq2}
\end{align}
Change the co-ordinates as
\begin{align*}
\tilde{z}^i_{l_x}=z^i_{l_x}+\frac{1}{2}\displaystyle\sum_{\alpha,\beta} \frac{\partial h_{\beta i}^{U_{l_x}}}{\partial z^\alpha_{l_x}}(0)z^\alpha_{l_x} z^\beta_{l_x}.
\end{align*}
Which gives
\begin{align*}
d\tilde{z}^i_{l_x}&=dz^i_{l_x}+\frac{1}{2}\left(\displaystyle\sum_{\alpha,\beta}\frac{\partial h^{U_{l_x}}_{\beta i}}{\partial z_{l_x}^\alpha}(0)z_{l_x}^\alpha dz_{l_x}^\beta+\frac{\partial h^{U_{l_x}}_{\beta i}}{\partial z_{l_x}^\alpha}(0)dz_{l_x}^\alpha z_{l_x}^\beta\right)\\
&=dz^i_{l_x}+\frac{1}{2}\displaystyle\sum_{\alpha,\beta}\left(\frac{\partial h_{\beta i}^{U_{l_x}}}{\partial z_{l_x}^\alpha}(0)+\frac{\partial h^{U_{l_x}}_{\alpha i}}{\partial z_{l_x}^\beta}(0)\right)z_{l_x}^\alpha dz_{l_x}^\beta\\
&=dz^i_{l_x}+\displaystyle\sum_{\alpha,\beta}\frac{\partial h^{U_{l_x}}_{\beta i}}{\partial z_{l_x}^\alpha}(0)z_{l_x}^\alpha dz_{l_x}^\beta\quad(\text{using Equation }\eqref{eq1}).
\end{align*}
Similarly, using Equation \eqref{eq2}, we have
$$
d\overline{\tilde{z}}^i_{l_x}=d\overline{z}^i_{l_x}+\displaystyle\sum_{\alpha,\beta}\frac{\partial h^{U_{l_x}}_{\beta i}}{\partial \overline{z}_{l_x}^\alpha}(0)\overline{z}_{l_x}^\alpha d\overline{z}_{l_x}^\beta
$$
Furthermore, we have
\begin{align*}
d\tilde{z}^i_{l_x}\wedge d\overline{\tilde{z}}^i_{l_x}=dz^i_{l_x}\wedge d\overline{z}^i_{l_x}+\displaystyle\sum_{\alpha,\beta}\frac{\partial h^{U_{l_x}}_{\beta i}}{\partial \overline{z}^\alpha_{l_x}}(0)\overline{z}^\alpha_{l_x} dz^i_{l_x}\wedge d\overline{z}^\beta_{l_x}+\displaystyle\sum_{\alpha,\beta}\frac{\partial h^{U_{l_x}}_{\beta i}}{\partial z^\alpha_{l_x}}(0)z^\alpha_{l_x} dz^\beta_{l_x}\wedge d\overline{z}^i_{l_x}+O(|z|^2)
\end{align*}
which further gives
\begin{align}
\frac{i}{2}\displaystyle\sum_i d\tilde{z}^i_{l_x}\wedge d\overline{\tilde{z}}^i_{l_x}&=\frac{i}{2}\left(\displaystyle\sum_idz^i_{l_x}\wedge d\overline{z}^i_{l_x}+\displaystyle\sum_{i,\alpha,\beta}\frac{\partial h^{U_{l_x}}_{\beta i}}{\partial \overline{z}^\alpha_{l_x}}(0)\overline{z}^\alpha_{l_x} dz^i_{l_x}\wedge d\overline{z}^\beta_{l_x}+\displaystyle\sum_{i,\alpha,\beta}\frac{\partial h^{U_{l_x}}_{\beta i}}{\partial z^\alpha_{l_x}}(0)z^\alpha_{l_x} dz^\beta_{l_x}\wedge d\overline{z}^i_{l_x}\right)\nonumber\\
&\qquad\quad+O(|z|^2)\nonumber\\
&=\frac{i}{2}\displaystyle\sum_{\beta,i}\left(\delta^\beta_i+\displaystyle\sum_{\alpha}\frac{\partial h^{U_{l_x}}_{i \beta}}{\partial \overline{z}^\alpha_{l_x}}(0)\overline{z}^\alpha_{l_x} +\displaystyle\sum_{\alpha}\frac{\partial h^{U_{l_x}}_{\beta i}}{\partial z^\alpha_{l_x}}(0)z^\alpha_{l_x} \right)dz^\beta_{l_x}\wedge d\overline{z}^i_{l_x}+O(|z|^2)\nonumber\\
&=\frac{i}{2}\displaystyle\sum_{\alpha,\beta}\left(\delta^\alpha_\beta+\displaystyle\sum_{i}\frac{\partial h^{U_{l_x}}_{\beta \alpha}}{\partial \overline{z}^i_{l_x}}(0)\overline{z}^i_{l_x} +\displaystyle\sum_{i}\frac{\partial h^{U_{l_x}}_{\alpha \beta}}{\partial z^i_{l_x}}(0)z^i_{l_x} \right)dz^\alpha_{l_x}\wedge d\overline{z}^\beta_{l_x}+O(|z|^2)\nonumber\\
&=\frac{i}{2}\displaystyle\sum_{\alpha,\beta}\left(\delta^\alpha_\beta+\displaystyle\sum_{i}\frac{\partial h^{U_{l_x}}_{\alpha \beta}}{\partial z^i_{l_x}}(0)z^i_{l_x} + \displaystyle\sum_{i}\frac{\partial h^{U_{l_x}}_{\alpha \beta}}{\partial \overline{z}^i_{l_x}}(0)\overline{z}^i_{l_x} \right)dz^\alpha_{l_x}\wedge d\overline{z}^\beta_{l_x}+O(|z|^2)\label{formomeg1}
\end{align}
Using equations \eqref{formomeg} and \eqref{formomeg1}, we have
\begin{align*}
\frac{i}{2}\displaystyle\sum_{\alpha,\beta}\delta^\alpha_\beta d\tilde{z}^\alpha_{l_x}\wedge d\overline{\tilde{z}}^\beta_{l_x}=\omega_{U_{l_x}}+O(|z|^2)
\end{align*}
Hence, we have the required proof of the theorem.
\end{proof}
Let $(X,\dholos{X})$ be a compact $d$-Hermitian manifold of real dimension $2n$. The Riemannian metric on the associated smooth manifold $X$ induces a Riemannian inner product on the bundle $\wedge^pT_X$ (respectively, $\wedge^pT_X\otimes \mathcal{L}$) as well as on its dual $\wedge^pT^\star_X $ (respectively, $\wedge^pT^\star_X \otimes \mathcal{L}$) and using the same line of arguments as in Section \ref{section2}, we have an inner product on the space $\mathcal{A}^k(X)$ (respectively, $\mathcal{A}^k(\mathcal{L})(X)$) $(1\leq k\leq 2n)$. There is a dual operator
$$
\star:\mathcal{A}^k(X)\rightarrow \mathcal{A}^{2n-k}(\mathcal{L})(X)
$$
such that for $\alpha,\beta\in \mathcal{A}^p(X)$ $(1\leq p\leq 2n)$, we have
$$
\alpha\wedge \star \beta=\langle \alpha,\beta\rangle (\star 1).
$$
\begin{definition}\rm{
As described above, the dual operator $\star$ is called the Hodge star operator.
}
\end{definition}
Using the Hodge star operator, we describe the formal adjoint $d^\star$\big(as well as $d^\star_\mathcal{L}$\big). Considering the manifold $X$ without boundary and using Stokes' theorem, the formal adjoint can be expressed as $d^\star=-\star d_\mathcal{L} \star$, as described below.

Let $\alpha\in \mathcal{A}^k(X)$, $\beta\in \mathcal{A}^{k+1}(X)$. By definition, we have
$$
(d\alpha,\beta)=\displaystyle\int_X d\alpha\wedge \star \beta.
$$
Since, $\alpha\wedge \star \beta\in \mathcal{A}^{2n-1}(\mathcal{L})(X)$, we have
$$
d_\mathcal{L}(\alpha\wedge \star \beta)=d\alpha\wedge \star \beta+(-1)^k\alpha\wedge d_\mathcal{L}(\star\beta).
$$
If $X$ is a manifold without boundary, by Stokes' theorem
$$
\displaystyle\int_X d_\mathcal{L}(\alpha\wedge\star\beta)=0,
$$
which implies
$$
\displaystyle\int_X d\alpha\wedge \star \beta=(-1)^{k+1}\displaystyle\int_X\alpha\wedge d_\mathcal{L}(\star \beta).
$$
Note that 
\begin{align*}
d_\mathcal{L}(\star \beta)&=(-1)^{k(2n-k)}\star(\star d_\mathcal{L}(\star \beta))\quad(\text{Since, }\star\star(\alpha)=(-1)^{k(2n-k)}\alpha\text{ for each k-form }\alpha)\\
&=(-1)^k\star(\star d_\mathcal{L}(\star \beta))\\
\text{or, }(-1)^k d_\mathcal{L}(\star \beta)&=\star (\star d_\mathcal{L}(\star \beta)).
\end{align*}
Which further implies
\begin{align*}
(d(\alpha),\beta)&=\displaystyle\int_X d(\alpha)\wedge \star \beta\\
&=-\displaystyle\int_X \alpha\wedge(-1)^kd_\mathcal{L}(\star \beta)\\
&=-\displaystyle\int_X \alpha \wedge \star(\star d_\mathcal{L}(\star \beta))\\
&=(\alpha,-\star d_\mathcal{L}(\star \beta)).
\end{align*}
Hence, we have $(\alpha,d^\star \beta)=(\alpha,-\star(d_\mathcal{L}(\star \beta)))$.

 There is also an induced inner product on the space of $d$-complex smooth $k$-forms $\mathcal{A}^k_\mathcal{C}(X)$ as described in Section \ref{section2}. We can extend the definition of dual operator $\star$ on $\mathcal{A}_\mathcal{C}^k(X)$ by defining
$$
\star:\mathcal{A}_\mathcal{C}^k(X)\rightarrow \mathcal{A}^{2n-k}_\mathcal{C}(X)\quad (\alpha_1+i\alpha_2\mapsto \star(\alpha_2)+i\star(\alpha_1)).
$$
Using the decomposition $\mathcal{A}_\mathcal{C}^k(X)=\displaystyle\sum_{\substack{p+q=k \\ 1\leq p,q\leq n}} \mathcal{A}^{(p,q)}(X)$, the Hodge star operator sends a $(p,q)$ form to $(n-p,n-q)$ form for $1\leq p,q\leq n$. For two elements $\alpha,\beta\in \mathcal{A}^{(p,q)}(X)$ the Hodge star operator satisfies
$$
\alpha\wedge \star \beta=\langle \alpha,\beta \rangle (\star 1')\quad(\text{where, }\star 1'=\text{either }\star 1\text{ or, }\star 1_\mathcal{L})
$$

As in the case of usual de-Rham operator, we also have adjoints for the operators $d_\mathcal{C},\partial$ and $\overline{\partial}$, which can be expressed as $d_\mathcal{C}^\star=-\star d_\mathcal{C}\star,\partial^\star=-\star \overline{\partial}\star$ and $\overline{\partial}^\star=-\star \partial\star$. 

Define the Laplacians 
\begin{align*}
\Delta_{d_\mathcal{C}}&=d_\mathcal{C}\circ d_\mathcal{C}^\star+d_\mathcal{C}^\star\circ d_\mathcal{C},\\ 
\Delta_\partial&=\partial \circ \partial^\star+\partial^\star\circ \partial\text{ and }\\
\Delta_{\overline{\partial}}&=\overline{\partial}\circ \overline{\partial}^\star+\overline{\partial}^\star\circ \overline{\partial}.
\end{align*} 
The above-described Laplacians are self-adjoint elliptic operators of order 2, degree 0 (for details, see \cite[Chapter 4, Section 5, Page no. 145]{ROW}). The space of Harmonic forms, that is, the  kernel of Laplacians, will be written as
\begin{align*}
\mathcal{H}_{\Delta_{d_\mathcal{C}}}(\wedge^k{T^\star}^\mathcal{C}_X X)&=\mathcal{H}^k(X)_\mathcal{C},\\
\mathcal{H}_{\Delta_\partial}(\mathcal{A}^{(p,q)}(X))&=\mathcal{H}^{(p,q)}_\partial(X)\text{ and }\\
\mathcal{H}_{\Delta_{\overline{\partial}}}(\mathcal{A}^{(p,q)}(X))&=\mathcal{H}^{(p,q)}_{\overline{\partial}}(X).
\end{align*}

Let $(X,\omega)$ be a $d$-K\"{a}hler manifold, where $\omega\in \mathcal{A}^{1,1}(\mathcal{L})(X)$ is $d_\mathcal{C}$-closed, we have an operator
$$
\mathfrak{L}:\mathcal{A}^k_\mathcal{C}(X)\rightarrow \mathcal{A}^{k+2}_\mathcal{C}(\mathcal{L})(X)\quad (\alpha\mapsto \omega \wedge \alpha)
$$
of degree 2, called the Lefschetz operator along with its adjoint $\Lambda:\mathcal{A}^{k+2}_\mathcal{C}(X)\rightarrow \mathcal{A}^k_\mathcal{C}(\mathcal{L})(X)$.
\begin{theorem}
The adjoint operator $\Lambda:\mathcal{A}^{k+2}_\mathcal{C}(X)\rightarrow \mathcal{A}^k_\mathcal{C}(\mathcal{L})(X)$ is given by
$$
\Lambda(\beta)=(-1)^k\star \mathfrak{L} \star (\beta)
$$
for every $\beta\in \mathcal{A}^{k+2}_\mathcal{C}(X)$.
\end{theorem}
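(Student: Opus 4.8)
The plan is to verify the defining adjoint relation $(\mathfrak{L}\alpha,\beta)=(\alpha,\Lambda\beta)$ directly for $\alpha\in\mathcal{A}^k_\mathcal{C}(X)$ and $\beta\in\mathcal{A}^{k+2}_\mathcal{C}(X)$, using only the characterizing property $\gamma\wedge\star\delta=\langle\gamma,\delta\rangle(\star 1')$ of the Hodge star together with the involution formula $\star\star=(-1)^{j(2n-j)}$ on $j$-forms. Once the pairing is shown to equal $(\alpha,(-1)^k\star\mathfrak{L}\star\beta)$, the uniqueness of adjoints recorded after the definition of adjoint operator forces $\Lambda\beta=(-1)^k\star\mathfrak{L}\star\beta$.

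First I would rewrite the left inner product as $(\mathfrak{L}\alpha,\beta)=\int_X(\omega\wedge\alpha)\wedge\star\beta$. Because $\omega$ has degree $2$, the sign $(-1)^{2k}=1$ lets me commute it past $\alpha$, so that
\[
(\mathfrak{L}\alpha,\beta)=\int_X\alpha\wedge(\omega\wedge\star\beta)=\int_X\alpha\wedge\mathfrak{L}(\star\beta),
\]
where $\star\beta\in\mathcal{A}^{2n-k-2}_\mathcal{C}(X)$ and hence $\mathfrak{L}(\star\beta)\in\mathcal{A}^{2n-k}_\mathcal{C}(X)$. To expose a $k$-form matching $\alpha$, I would apply the involution to the $(2n-k)$-form $\mathfrak{L}\star\beta$: since $\star\star=(-1)^{(2n-k)k}$ in that degree and $(-1)^{(2n-k)k}=(-1)^{k^2}=(-1)^k$ (as $2nk$ is even and $k^2\equiv k$), one obtains $\mathfrak{L}\star\beta=(-1)^k\star(\star\mathfrak{L}\star\beta)$. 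Substituting back and invoking the Hodge-star relation in degree $k$ yields
\[
(\mathfrak{L}\alpha,\beta)=(-1)^k\int_X\alpha\wedge\star(\star\mathfrak{L}\star\beta)=(-1)^k\int_X\langle\alpha,\star\mathfrak{L}\star\beta\rangle(\star 1')=(\alpha,(-1)^k\star\mathfrak{L}\star\beta),
\]
which is the asserted formula.

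The main obstacle is not the sign arithmetic, which is purely formal, but the bookkeeping of the orientation line bundle $\mathcal{L}$. Both $\omega$ and the single-form Hodge star produce $\mathcal{L}$-valued outputs, so I must check that the composite $\star\mathfrak{L}\star$ lands precisely in the stated codomain $\mathcal{A}^k_\mathcal{C}(\mathcal{L})(X)$, and that the involution $\star\star=(-1)^{j(2n-j)}$ survives the complex extension $\alpha_1+i\alpha_2\mapsto\star\alpha_2+i\star\alpha_1$. Using $\mathcal{L}^{\otimes 2}\cong\mathbb{R}$, the two stars flanking $\mathfrak{L}$ absorb the two $\mathcal{L}$-twists contributed by $\omega$ and by the inner star, so the genuine-part/$\mathcal{L}$-part decomposition is respected at every stage; verifying this compatibility degree-by-degree, and confirming that the wedge-star identity holds with $\star 1'$ chosen consistently as $\star 1$ or $\star 1_\mathcal{L}$ according to type, is the only step requiring real care.
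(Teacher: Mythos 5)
Your proposal is correct and follows essentially the same route as the paper's proof: both verify the adjoint pairing via the identity $\gamma\wedge\star\delta=\langle\gamma,\delta\rangle(\star 1')$, commute the degree-$2$ form $\omega$ past $\alpha$ with sign $(-1)^{2k}=1$, and apply the involution $\star\star=(-1)^{j(2n-j)}$ to the $(2n-k)$-form $\mathfrak{L}\star\beta$ with the same reduction $(-1)^{k(2n-k)}=(-1)^k$. Your closing remarks on the $\mathcal{L}$-twist bookkeeping and the complex extension of $\star$ are in fact more explicit than the paper, which only notes in passing that $\omega$ is real and $\star$ is $\mathbb{R}$-linear.
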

\begin{proof}
Let $\alpha\in \mathcal{A}^k_\mathcal{C}(X), \beta\in \mathcal{A}^{k+2}_\mathcal{C}(X)$, then
\begin{align*}
\langle \mathfrak{L}(\alpha),\beta \rangle (\star 1')&= \mathfrak{L}(\alpha)\wedge \star \beta\\
&=(\omega \wedge \alpha)\wedge \star \overline{\beta}\\
&=(-1)^{2k}\alpha\wedge(\omega\wedge \star\overline{\beta})\\
&=\alpha\wedge(\omega\wedge \star\overline{\beta}).
\end{align*}
Note that
\begin{align*}
(\omega\wedge \star\overline{\beta})&=(-1)^{k(2n-k)}\star(\star(\omega\wedge \star(\overline{\beta})))\\
&=(-1)^k\star(\overline{\star(\omega\wedge \star(\beta))})\quad (\text{because }\omega\text{ is real form and }\star\text{ is $\mathbb{R}$-linear})\\
&=(-1)^k\star(\overline{\star(\mathfrak{L}(\star \beta))}).
\end{align*}
Further, we have
\begin{align*}
\langle \mathfrak{L}(\alpha),\beta \rangle (\star 1')&=\alpha\wedge (-1)^k \star(\overline{\star(\mathfrak{L}(\star \beta))})\\
&=\langle \alpha, (-1)^k\star(\mathfrak{L}(\star \beta))\rangle (\star 1').
\end{align*}
This proves the proposition.
\end{proof}
\begin{theorem}\label{Kahleriden}
If $(X,\omega)$ is $d$-K\"{a}hler manifold, then we have 
\begin{align}
[\overline{\partial}^\star, \mathfrak{L}]&=i\partial,\label{4.1}\\
[\partial^\star, \mathfrak{L}]&=-i\overline{\partial},\label{4.2}\\
[\Lambda, \overline{\partial}]&=-i\partial^\star,\label{4.3}\\
[\Lambda,\partial]&=i\overline{\partial}^\star.\label{4.4}
\end{align}
\end{theorem}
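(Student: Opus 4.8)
The plan is to prove only the first identity \eqref{4.1} from scratch and to deduce the remaining three from it by the two natural symmetries of the situation. The first is complex conjugation, available because the fundamental form $\omega$ is real, so $\overline{\mathfrak{L}}=\mathfrak{L}$ and $\overline{\Lambda}=\Lambda$, while conjugation interchanges $\partial$ with $\overline{\partial}$ and $\partial^\star$ with $\overline{\partial}^\star$. The second is formal adjunction, which is legitimate by Proposition \ref{adjoint}, since $\mathfrak{L}^\star=\Lambda$, $(\overline{\partial}^\star)^\star=\overline{\partial}$, and the adjoint is conjugate-linear. To establish \eqref{4.1} itself, I would exploit that it is an identity between first-order operators whose local coefficients depend on the $d$-Hermitian metric only through $h^{U_l}_{jk}$ and its first-order derivatives; this reduces the whole statement to a constant-coefficient computation on flat $\mathbb{C}^n$, which is exactly the point of the osculation result in Theorem \ref{fundaform}.

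First I would treat the flat model. On $\mathbb{C}^n$ with $h_{jk}=\delta_{jk}$ and $\omega=\tfrac{i}{2}\sum_j dz^j\wedge d\overline{z}^j$, write the basic operators in terms of the constant-coefficient wedge operators $e_j=dz^j\wedge(-)$, $\overline{e}_j=d\overline{z}^j\wedge(-)$ and their pointwise adjoints (the contraction operators) $e_j^\star,\overline{e}_j^\star$, namely $\partial=\sum_j e_j\,\partial_{z^j}$, $\overline{\partial}=\sum_j\overline{e}_j\,\partial_{\overline{z}^j}$, $\mathfrak{L}=\tfrac{i}{2}\sum_j e_j\overline{e}_j$, and $\overline{\partial}^\star=-\star\partial\star$ written as a contraction operator composed with the holomorphic derivatives $\partial_{z^j}$. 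Since all the algebraic factors are constant, every differential factor $\partial_{z^j}$ commutes past them, so $[\overline{\partial}^\star,\mathfrak{L}]$ collapses to the purely algebraic commutator of the wedge and contraction operators multiplied by $\partial_{z^j}$. Evaluating this using the standard anticommutation relations among the $e_j,\overline{e}_j$ and their adjoints (only the pairing of a wedge with the contraction dual to it contributes) yields precisely $i\sum_j e_j\,\partial_{z^j}=i\partial$. This is the only genuine computation in the proof, and I would keep it at the level of the operator algebra rather than manipulating components.

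Next I would transfer the identity to an arbitrary $d$-K\"ahler manifold $(X,\omega)$. Fix $x\in X$ and choose, by Theorem \ref{fundaform}, a holomorphic (resp.\ anti-holomorphic) chart $(U_{l_x},\phi_{l_x})$ around $x$ in which $h^{U_{l_x}}_{jk}=\delta^j_k+O(|z|^2)$, so that at $\phi_{l_x}(x)=0$ the metric equals the identity and all of its first derivatives vanish. Because the local expressions for $\mathfrak{L}$, for $\Lambda=(-1)^k\star\mathfrak{L}\star$, and for $\overline{\partial}^\star=-\star\partial\star$ involve $h$ and $dh$ only, at the center point they agree with their flat-model counterparts; hence the difference between $[\overline{\partial}^\star,\mathfrak{L}]$ and $i\partial$ is an expression whose coefficients are built from $dh(0)=0$ and therefore vanishes at $x$. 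As $x$ is arbitrary, \eqref{4.1} holds identically on $X$. The argument is chart-independent since the operators $\partial,\overline{\partial},\mathfrak{L},\Lambda$ and $\star$ respect the compatibility conditions distinguishing holomorphic from anti-holomorphic transitions.

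Finally I would read off the remaining identities. Applying complex conjugation to \eqref{4.1} and using $\overline{\mathfrak{L}}=\mathfrak{L}$ together with $\overline{i\partial}=-i\overline{\partial}$ gives \eqref{4.2}; taking formal adjoints of \eqref{4.1} and of \eqref{4.2}, with $\mathfrak{L}^\star=\Lambda$, $(\overline{\partial}^\star)^\star=\overline{\partial}$ and $(i\partial)^\star=-i\partial^\star$, gives \eqref{4.3} and \eqref{4.4} respectively. The main obstacle I anticipate is the transfer step: one must check carefully that $\overline{\partial}^\star=-\star\partial\star$, which involves the metric-dependent Hodge star twice, really does depend on $h$ only to first order at the chosen point, so that the second-order osculation of Theorem \ref{fundaform} (and hence the vanishing of the first derivatives of $h$ at $x$) is exactly what kills the error terms. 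Everything else is either the flat-space Clifford algebra of the second step or the formal conjugation and adjunction symmetries of the last step.
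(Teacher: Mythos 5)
Your proposal is correct and follows essentially the same route as the paper's proof: reduction to the flat model $\omega=\tfrac{i}{2}\sum_j dz^j\wedge d\overline{z}^j$ at an arbitrary point via the osculation Theorem~\ref{fundaform} (justified, exactly as in the paper, by the observation that the commutator $[\overline{\partial}^\star,\mathfrak{L}]$ involves the metric only to first order, and those derivatives vanish at the center of the chosen chart), the flat computation via the local formula $\overline{\partial}^\star=-\sum_j dz^j\vee \partial_{z^j}$ and the wedge/contraction algebra (your $e_j,\overline{e}_j,e_j^\star$ are the paper's $dz^j\wedge$ and $dz^j\vee$ operators), and then \eqref{4.2}--\eqref{4.4} by conjugation and formal adjunction. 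Your bookkeeping of which identity is the adjoint of which is in fact cleaner than the paper's closing sentence (the adjoint of \eqref{4.1} yields \eqref{4.3}, that of \eqref{4.2} yields \eqref{4.4}), but this is a cosmetic difference, not a different method.
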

\begin{proof}
Let $\alpha\in \mathcal{A}^q(\mathbb{C}^n)$ and $\beta=\mathcal{A}^{q-1}(\mathbb{C}^n)$, define $dz^j\vee \alpha\in \mathcal{A}^{q-1}(\mathbb{C}^n)$, keep ensuring that
$$
\langle dz^j\vee \alpha,\beta\rangle=\langle \alpha,d\overline{z}^j\wedge \beta\rangle.
$$
Let $\alpha=dz^j,\beta=1$ (for $q=1$) then
\begin{align*}
\langle dz^j\vee dz^j,1\rangle=\langle dz^j,d\overline{z}^j\rangle.
\end{align*}
We have $dz^j\vee dz^j=\langle dz^j,d\overline{z}^j\rangle$.

Similarly for $k\in \{1,2,\dots,n\}$, we have
\begin{align*}
dz^j\vee dz^k&=\langle dz^k,d\overline{z}^j\rangle\quad\text{and}\\
    dz^j\vee d\overline{z}^k&=\langle d\overline{z}^k,d\overline{z}^j\rangle .
\end{align*}

Let $\alpha\in \mathcal{A}^q(\mathbb{C}^n)$ and $\beta\in \mathcal{A}^{q-1}(\mathbb{C}^n)$ such that $\beta$ has compact support, which implies the smooth function $\langle dz^j\vee \alpha,\beta\rangle$ has compact support. Note that
\begin{align*}
\displaystyle\int_{\mathbb{C}^n}\frac{\partial \left\langle dz^j\vee \alpha,\beta\right\rangle}{\partial z^j} (\star 1)&=\displaystyle\int_{\mathbb{C}^n}\left\langle dz^j\vee \frac{\partial \alpha}{\partial z^j},\beta\right\rangle(\star 1)\\
&\qquad+\displaystyle\int_{\mathbb{C}^n}\left\langle dz^j\vee \alpha,\frac{\partial \beta}{\partial \overline{z}^j}\right\rangle(\star 1)\quad(\star 1\text{ is standard volume form on }\mathbb{C}^n)\\
&=\left(dz^j\vee \frac{\partial \alpha}{\partial z^j},\beta\right)+\left(\alpha,d\overline{z}^j\wedge \frac{\partial \beta}{\partial \overline{z}^j}\right)\quad(\text{for each }j\in \{1,2,\dots,n\}).
\end{align*}
Furthermore, we have
\begin{align*}
\left(\displaystyle\sum_j dz^j\vee \frac{\partial \alpha}{\partial z^j},\beta\right)+\left(\alpha,d\overline{z}^j\wedge\displaystyle\sum \frac{\partial \beta}{\partial \overline{z}^j}\right)&=\left(\displaystyle\sum_j dz^j\vee \frac{\partial \alpha}{\partial z^j},\beta\right)+\left(\alpha,\overline{\partial}\beta\right)\\
&=\left(\displaystyle\sum_j dz^j\vee \frac{\partial \alpha}{\partial z^j},\beta\right)+\left(\overline{\partial}^\star\alpha,\beta\right).
\end{align*}
Using Stokes' theorem and the above calculation, we have
\begin{equation}\label{state1}
\overline{\partial}^\star\equiv -\displaystyle\sum_j dz^j\vee \frac{\partial}{\partial z^j}.
\end{equation}
Let $u\in \mathcal{A}^{(p,q)}(X)$, then
\begin{align*}
[\overline{\partial}^\star,\mathfrak{L}](u)&=\overline{\partial}^\star (\mathfrak{L}(u))-\mathfrak{L}(\overline{\partial}^\star(u))\\
&=\overline{\partial}^\star (\omega\wedge u)-\omega\wedge (\overline{\partial}^\star(u)).
\end{align*}
Let $x_0\in X$ be an arbitrary point and by Theorem \ref{fundaform}, we can have a local co-ordinate chart around $x_0$ say, $(U_{x_0},\tilde{\phi}_{x_0}=(\tilde{z}_{x_0}^1,\tilde{z}_{x_0}^2,\dots,\tilde{z}_{x_0}^n))$ such that 
$$
\omega_{U_{x_0}}(\phi_{x_0}(x))=\displaystyle\sum_{\alpha\beta}\delta^\alpha_\beta d\tilde{z}^\alpha_{x_0}\wedge d\overline{\tilde{z}}^\beta_{x_0}+O(|\phi_{x_0}(x)|^2)\quad(\text{for }x\in U_{x_0})
$$
Since in the commutator $[\overline{\partial}^\star,\mathfrak{L}]$ only first order partial differentials appear, while the first order partial derivatives of entries of Hermitian matrix associated to $d$-K\"ahler form vanishes at the point in transformed co-ordinates as described above, so it is sufficient to give the required proof of the theorem, for the standard $d$-K\"ahler form
$$
\omega=\frac{i}{2}\displaystyle\sum_{\alpha\beta}\delta^\alpha_\beta dz^\alpha\wedge d\overline{z}^\beta
$$

Using Equation \eqref{state1},
\begin{align*}
\overline{\partial}^\star(\omega\wedge\alpha)&=-\displaystyle\sum_j dz^j\vee \frac{\partial (\omega\wedge \alpha)}{\partial z^j}\\
&=-\displaystyle\sum dz^j\vee \left(\omega\wedge \frac{\partial \alpha}{\partial z^j}\right)\quad(\text{as }\frac{\partial \omega}{\partial z^j}=0)\\
&=\frac{-i}{2}\displaystyle\sum_j dz^j\vee \left(\displaystyle\sum_k dz^k\wedge d\overline{z}_i\wedge \frac{\partial \alpha}{\partial z^j}\right)
\end{align*}
\begin{align*}
&=\frac{-i}{2}\displaystyle\sum_j\left(\displaystyle\sum_k(dz^j\vee dz^k)\wedge d\overline{z}^k\wedge \frac{\partial \alpha}{\partial z^j}\right)\\
&\qquad+\frac{i}{2}\displaystyle\sum_j\left(\displaystyle\sum_k dz^k\wedge (dz^j\vee d\overline{z}^k)\wedge \frac{\partial \alpha}{\partial z^j}\right)\\
&\qquad\qquad\frac{-i}{2}\displaystyle\sum_j\left(\displaystyle\sum_k dz^k\wedge d\overline{z}^k\wedge \left(dz^j\vee \frac{\partial \alpha}{\partial z^j}\right)\right)\\
&=i\displaystyle\sum_j\left(dz^j\wedge \frac{\partial \alpha}{\partial z^j}\right)+\omega \wedge \left(\displaystyle\sum_j dz^j\vee \frac{\partial \alpha}{\partial z^j}\right)\\
&=i\displaystyle\sum_j\left(dz^j\wedge \frac{\partial \alpha}{\partial z^j}\right)+\omega \wedge (\overline{\partial}^\star \alpha)
\end{align*}
Which further gives,
\begin{align*}
[\overline{\partial}^\star,\mathfrak{L}]\alpha&=i\partial(\alpha)\\
\text{that means, }[\overline{\partial}^\star,\mathfrak{L}]&= i\partial\quad(\text{as required})
\end{align*}
Hence, we get \eqref{4.1} and by taking conjugate of \eqref{4.1}, we have \eqref{4.2}. Similarly, we get \eqref{4.3} from \eqref{4.4}. The equation \eqref{4.4} follows from \eqref{4.1} because of formal adjoint property.
\end{proof}
\begin{theorem}\label{lapequiv}
Let $(X,\omega)$ be a $d$-K\"{a}hler manifold and $\Delta_{d_\mathcal{C}},\Delta_\partial,\Delta_{\overline{\partial}}$ be the Laplacians of the operators $d_\mathcal{C},\partial,\overline{\partial}$, respectively. Then we have
$$
\Delta_{d_\mathcal{C}}=2\Delta_\partial=2\Delta_{\overline{\partial}}\hspace{2pt}.
$$
In particular, an element $\alpha\in \mathcal{A}^k_\mathcal{C}(X)$ is $\Delta_{d_\mathcal{C}}$-harmonic if and only if it is $\Delta_\partial$-harmonic if and only if it is $\Delta_{\overline{\partial}}$-harmonic.
\end{theorem}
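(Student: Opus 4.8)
The plan is to reduce the identity $\Delta_{d_\mathcal{C}} = 2\Delta_\partial = 2\Delta_{\overline{\partial}}$ to two purely formal consequences of the K\"ahler identities established in Theorem \ref{Kahleriden}: first, that the two ``mixed'' Laplacians agree, $\Delta_\partial = \Delta_{\overline{\partial}}$; and second, that the cross terms coupling $\partial$ and $\overline{\partial}$ drop out of $\Delta_{d_\mathcal{C}}$. Throughout I would work locally on a chart and invoke the normal-coordinate reduction of Theorem \ref{fundaform}, exactly as in the proof of Theorem \ref{Kahleriden}, so that it suffices to manipulate $\partial,\overline{\partial}$ and their formal adjoints as operators satisfying the four bracket relations \eqref{4.1}--\eqref{4.4}.

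First I would assemble the structural preliminaries. Since the canonical almost $d$-complex structure of $(X,\dholos{X})$ is integrable, we have $d_\mathcal{C} = \partial + \overline{\partial}$, and taking formal adjoints (using $d_\mathcal{C}^\star = -\star d_\mathcal{C}\star$, $\partial^\star = -\star\overline{\partial}\star$, $\overline{\partial}^\star = -\star\partial\star$) gives $d_\mathcal{C}^\star = \partial^\star + \overline{\partial}^\star$. From $d_\mathcal{C}^2 = 0$, comparing bidegrees yields the three relations
\begin{equation*}
\partial^2 = 0, \qquad \overline{\partial}^2 = 0, \qquad \partial\overline{\partial} + \overline{\partial}\partial = 0.
\end{equation*}
Expanding $\Delta_{d_\mathcal{C}} = (\partial + \overline{\partial})(\partial^\star + \overline{\partial}^\star) + (\partial^\star + \overline{\partial}^\star)(\partial + \overline{\partial})$ into eight terms and regrouping gives
\begin{equation*}
\Delta_{d_\mathcal{C}} = \Delta_\partial + \Delta_{\overline{\partial}} + \big(\partial\overline{\partial}^\star + \overline{\partial}^\star\partial\big) + \big(\overline{\partial}\partial^\star + \partial^\star\overline{\partial}\big).
\end{equation*}
So everything comes down to showing the two anticommutators vanish and that $\Delta_\partial = \Delta_{\overline{\partial}}$.

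Next I would run the bracket computations. Rewriting \eqref{4.3}--\eqref{4.4} as $\partial^\star = i[\Lambda,\overline{\partial}]$ and $\overline{\partial}^\star = -i[\Lambda,\partial]$ and substituting, the anticommutator $\overline{\partial}\partial^\star + \partial^\star\overline{\partial}$ collapses to $-i\overline{\partial}^2\Lambda + i\Lambda\overline{\partial}^2$, hence vanishes; symmetrically, $\partial\overline{\partial}^\star + \overline{\partial}^\star\partial$ reduces to a combination of $\partial^2 = 0$. For the equality of the two Laplacians I would substitute the same expressions for $\partial^\star$ and $\overline{\partial}^\star$ into $\Delta_\partial$ and $\Delta_{\overline{\partial}}$ and subtract: the terms of the form $\partial\Lambda\overline{\partial}$ and $\overline{\partial}\Lambda\partial$ cancel outright, while the remaining terms combine into $-i(\partial\overline{\partial} + \overline{\partial}\partial)\Lambda$ and $i\Lambda(\partial\overline{\partial} + \overline{\partial}\partial)$, both of which vanish by the anticommutation relation above. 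Thus $\Delta_\partial = \Delta_{\overline{\partial}}$, and combining with the previous step gives $\Delta_{d_\mathcal{C}} = \Delta_\partial + \Delta_{\overline{\partial}} = 2\Delta_\partial = 2\Delta_{\overline{\partial}}$. The harmonicity statement is then immediate, since the three Laplacians are positive multiples of one another and hence have identical kernels.

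The routine algebra above is not where the difficulty lies; the main obstacle is ensuring that the structural identities behave correctly in the $\mathcal{L}$-twisted $d$-complex setting. Concretely, one must verify that $d_\mathcal{C} = \partial + \overline{\partial}$ and $d_\mathcal{C}^\star = \partial^\star + \overline{\partial}^\star$ hold as operators compatible with the gluing data of the orientation bundle $\mathcal{L}$, that the adjoint formulas $\partial^\star = -\star\overline{\partial}\star$ and $\overline{\partial}^\star = -\star\partial\star$ are globally well defined across both holomorphic and anti-holomorphic transition maps, and that the K\"ahler identities \eqref{4.1}--\eqref{4.4}, proved pointwise in normal coordinates, patch to genuine global operator identities. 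Once this bookkeeping is in place, the remaining manipulation is formal and identical to the classical K\"ahler case.
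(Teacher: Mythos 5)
Your proposal is correct and follows essentially the same route as the paper, whose proof is simply the one-line observation that the result ``follows from Theorem \ref{Kahleriden} and the fact that $d_\mathcal{C}=\partial+\overline{\partial}$''; you have merely written out the standard formal algebra (vanishing of the cross anticommutators via $\partial^\star=i[\Lambda,\overline{\partial}]$, $\overline{\partial}^\star=-i[\Lambda,\partial]$, and the cancellation giving $\Delta_\partial=\Delta_{\overline{\partial}}$) that the paper leaves implicit. Your closing remark about verifying that the identities patch correctly across the $\mathcal{L}$-twisted gluing data is a fair caveat, but it is the same bookkeeping the paper itself relies on in Theorem \ref{Kahleriden}, so it does not constitute a departure in method.
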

\begin{proof}
It follows from theorem \ref{Kahleriden} and the fact that $d=\partial+\overline{\partial}$.
\end{proof}
\begin{cor}\label{lapequivcor}
Let $(X,\omega)$ be a $d$-K\"{a}hler manifold then for $\alpha\in \mathcal{A}^{(p,q)}(X)$, $\Delta_{d_\mathcal{C}}(\alpha)\in \mathcal{A}^{(p,q)}(X)$.
\end{cor}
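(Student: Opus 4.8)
The plan is to deduce this immediately from Theorem \ref{lapequiv}, which furnishes the identity $\Delta_{d_\mathcal{C}} = 2\Delta_{\overline{\partial}}$, combined with the elementary observation that $\Delta_{\overline{\partial}}$ manifestly preserves bidegree. First I would recall that $\overline{\partial}$ sends $\mathcal{A}^{(p,q)}(X)$ into $\mathcal{A}^{(p,q+1)}(X)$ by its very definition $\overline{\partial} = \pi^{(p,q+1)}\circ d_\mathcal{C}$, and that its formal adjoint $\overline{\partial}^\star = -\star\partial\star$ therefore lowers the antiholomorphic degree by one, sending $\mathcal{A}^{(p,q)}(X)$ into $\mathcal{A}^{(p,q-1)}(X)$. (This is consistent with the local expression $\overline{\partial}^\star = -\sum_j dz^j\vee \tfrac{\partial}{\partial z^j}$ derived in the proof of Theorem \ref{Kahleriden}, since contraction against $dz^j$ decreases the $q$-degree.)

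Given $\alpha\in\mathcal{A}^{(p,q)}(X)$, I would then trace the two terms of the $\overline{\partial}$-Laplacian through these bidegree shifts. The composite $\overline{\partial}^\star\overline{\partial}$ carries $\alpha$ through $\mathcal{A}^{(p,q+1)}(X)$ and back into $\mathcal{A}^{(p,q)}(X)$, while $\overline{\partial}\,\overline{\partial}^\star$ carries $\alpha$ through $\mathcal{A}^{(p,q-1)}(X)$ and back into $\mathcal{A}^{(p,q)}(X)$. Hence
$$
\Delta_{\overline{\partial}}(\alpha) = \overline{\partial}\,\overline{\partial}^\star(\alpha) + \overline{\partial}^\star\overline{\partial}(\alpha) \in \mathcal{A}^{(p,q)}(X).
$$
Applying the K\"ahler identity $\Delta_{d_\mathcal{C}} = 2\Delta_{\overline{\partial}}$ from Theorem \ref{lapequiv} then yields $\Delta_{d_\mathcal{C}}(\alpha) = 2\Delta_{\overline{\partial}}(\alpha)\in\mathcal{A}^{(p,q)}(X)$, which is exactly the assertion.

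The point worth emphasizing is that the conclusion is genuinely a consequence of the K\"ahler hypothesis and is \emph{not} visible from the definition of $\Delta_{d_\mathcal{C}}$ itself: since $d_\mathcal{C}$ carries $\mathcal{A}^{(p,q)}(X)$ into the four-term sum $\mathcal{A}^{(p+2,q-1)}(X)\oplus\mathcal{A}^{(p+1,q)}(X)\oplus\mathcal{A}^{(p,q+1)}(X)\oplus\mathcal{A}^{(p-1,q+2)}(X)$, a direct computation of $\Delta_{d_\mathcal{C}}=d_\mathcal{C}d_\mathcal{C}^\star + d_\mathcal{C}^\star d_\mathcal{C}$ would produce cross-terms spanning several bidegrees. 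What makes those off-diagonal contributions cancel is precisely the equality with $2\Delta_{\overline{\partial}}$, i.e.\ the vanishing of the first-order derivatives of the metric in the normalizing coordinates of Theorem \ref{fundaform}. Thus there is no real obstacle to overcome here beyond correctly invoking Theorem \ref{lapequiv}; the only care needed is to record the bidegree behavior of $\overline{\partial}$ and $\overline{\partial}^\star$ accurately, after which the statement is immediate.
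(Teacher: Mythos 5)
Your proposal is correct and takes essentially the same route as the paper: the paper's own proof is just the one-line remark that the corollary follows from Theorem \ref{lapequiv}, and your argument merely makes explicit the bidegree bookkeeping (that $\overline{\partial}$ raises and $\overline{\partial}^\star$ lowers the antiholomorphic degree, so $\Delta_{\overline{\partial}}$ preserves type) which the paper leaves implicit. Your closing remark that the type-preservation of $\Delta_{d_\mathcal{C}}$ is a genuine consequence of the K\"ahler hypothesis, not of the definition of $d_\mathcal{C}$, is accurate and consistent with the paper's framework.
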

\begin{proof}
This is obvious using the Theorem \ref{lapequiv}.
\end{proof}
Since, $\Delta_\partial=\Delta_{\overline{\partial}}$, we have $\mathcal{H}_\partial^{(p,q)}(X)=\mathcal{H}_{\overline{\partial}}^{(p,q)}(X)$, which will be denoted by $\mathcal{H}^{(p,q)}(X)$ and using Theorem \ref{lapequiv} and Corollary \ref{lapequivcor}, we have the following:

\begin{theorem}\label{hodgedecomp}
Let $(X,\omega)$ be a $d$-K\"{a}hler manifold and $\alpha=\displaystyle\sum_{p+q=k}\alpha^{(p,q)}$ be the decomposition of a $d$-smooth $k$-form on $X$. Then $\alpha$ is $\Delta_{d_\mathcal{C}}$-harmonic iff $\alpha^{(p,q)}$ is $\Delta_\partial$-harmonic iff $\alpha^{(p,q)}$ is $\Delta_{\overline{\partial}}$-harmonic. In particular,
$$
\mathcal{H}^k(X)_\mathcal{C}=\displaystyle\bigoplus_{p+q=k}\mathcal{H}^{(p,q)}(X).
$$
\end{theorem}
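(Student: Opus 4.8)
The plan is to deduce everything formally from the two facts already in hand: the identity of Laplacians $\Delta_{d_\mathcal{C}} = 2\Delta_\partial = 2\Delta_{\overline{\partial}}$ from Theorem \ref{lapequiv}, and the type-preservation $\Delta_{d_\mathcal{C}}(\mathcal{A}^{(p,q)}(X)) \subset \mathcal{A}^{(p,q)}(X)$ from Corollary \ref{lapequivcor}. The substantive analytic content has already been absorbed into those statements through the K\"ahler identities of Theorem \ref{Kahleriden}; what remains is essentially linear algebra on the direct-sum decomposition $\mathcal{A}^k_\mathcal{C}(X) = \bigoplus_{p+q=k}\mathcal{A}^{(p,q)}(X)$.

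First I would write $\alpha = \sum_{p+q=k}\alpha^{(p,q)}$ and apply $\Delta_{d_\mathcal{C}}$ term by term. By Corollary \ref{lapequivcor} each summand $\Delta_{d_\mathcal{C}}(\alpha^{(p,q)})$ again lies in $\mathcal{A}^{(p,q)}(X)$, so $\Delta_{d_\mathcal{C}}(\alpha) = \sum_{p+q=k}\Delta_{d_\mathcal{C}}(\alpha^{(p,q)})$ is precisely the type decomposition of $\Delta_{d_\mathcal{C}}(\alpha)$. Since the type decomposition is a direct sum, $\Delta_{d_\mathcal{C}}(\alpha) = 0$ holds if and only if $\Delta_{d_\mathcal{C}}(\alpha^{(p,q)}) = 0$ for every pair $(p,q)$ with $p+q=k$. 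This isolates the harmonicity of $\alpha$ into the harmonicity of its components.

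Next I would invoke Theorem \ref{lapequiv} on each component: because $\Delta_{d_\mathcal{C}} = 2\Delta_\partial = 2\Delta_{\overline{\partial}}$, the condition $\Delta_{d_\mathcal{C}}(\alpha^{(p,q)}) = 0$ is equivalent to $\Delta_\partial(\alpha^{(p,q)}) = 0$ and to $\Delta_{\overline{\partial}}(\alpha^{(p,q)}) = 0$. Chaining this with the previous step yields the stated triple equivalence: $\alpha$ is $\Delta_{d_\mathcal{C}}$-harmonic iff each $\alpha^{(p,q)}$ is $\Delta_\partial$-harmonic iff each $\alpha^{(p,q)}$ is $\Delta_{\overline{\partial}}$-harmonic, i.e. $\alpha^{(p,q)} \in \mathcal{H}^{(p,q)}(X)$ for all $(p,q)$.

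Finally, for the direct-sum statement I would read this equivalence as a statement about spaces. The assignment $\alpha \mapsto (\alpha^{(p,q)})_{p+q=k}$ maps $\mathcal{H}^k(X)_\mathcal{C}$ into $\bigoplus_{p+q=k}\mathcal{H}^{(p,q)}(X)$ by the forward direction, while any family of harmonic components sums to a $\Delta_{d_\mathcal{C}}$-harmonic $k$-form by the reverse direction; directness is inherited from the directness of the ambient type decomposition, since distinct $\mathcal{H}^{(p,q)}(X)$ sit inside distinct summands $\mathcal{A}^{(p,q)}(X)$. This gives $\mathcal{H}^k(X)_\mathcal{C} = \bigoplus_{p+q=k}\mathcal{H}^{(p,q)}(X)$. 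I do not expect a genuine obstacle here: the only point requiring care is that $\Delta_{d_\mathcal{C}}$ commutes with projection onto a fixed type $(p,q)$, which is exactly what Corollary \ref{lapequivcor} supplies, and everything else is formal bookkeeping once Theorems \ref{Kahleriden} and \ref{lapequiv} are granted.
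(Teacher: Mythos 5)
Your proposal is correct and takes essentially the same route as the paper: the paper states Theorem \ref{hodgedecomp} without a separate proof, deriving it directly from Theorem \ref{lapequiv} and Corollary \ref{lapequivcor}, exactly as you do, by using the type-preservation of $\Delta_{d_\mathcal{C}}$ to reduce harmonicity of $\alpha$ to harmonicity of its $(p,q)$-components and then the identity $\Delta_{d_\mathcal{C}}=2\Delta_\partial=2\Delta_{\overline{\partial}}$ to pass between the three Laplacians. Your write-up simply makes explicit the linear-algebra bookkeeping that the paper leaves implicit.
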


Note that the de-Rham chain complex, as well as the Dolbeault chain complex, are elliptic complexes (see Section \ref{section2}) and their Laplacians are differential operators of order $2k$, which are self-adjoint, and hence
both are elliptic differential operators \cite[Chapter 4, Section 5, Page no. 145]{ROW}. Therefore, by using Theorem \ref{ellipticdec} for both  elliptic complexes \eqref{derhamcomp} and \eqref{dolbeaultcomp}, we have

\begin{theorem}\label{formdecomp}
Let $X$ be a compact $d$-Hermitian manifold. Then, there exist an orthogonal decompositions,
$$
\mathcal{A}^k_\mathcal{C}(X)=d_\mathcal{C}\big(\mathcal{A}^{k-1}_\mathcal{C}(X)\big)\oplus \mathcal{H}^k(X)_\mathcal{C}\oplus d_\mathcal{C}^\star\big(\mathcal{A}^{k+1}_\mathcal{C}(X)\big).
$$
Furthermore, $\mathcal{H}^k(X)_\mathcal{C}$ is finite dimensional along with 
$$
H^k_{dR}(X,\mathcal{C})\simeq \mathcal{H}^k(X)_\mathcal{C}.
$$ 
\end{theorem}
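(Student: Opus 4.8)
The plan is to invoke the abstract Hodge decomposition for elliptic complexes, Theorem \ref{ellipticdec}, applied to the de Rham complex \eqref{derhamcomp}. First I would recall that \eqref{derhamcomp} has already been shown to be an elliptic complex (its associated $1$-symbol sequence is the exact wedging sequence computed in the remark following the definition of elliptic complex), so with $E_j = \wedge^j {T^\star}^\mathcal{C}_X$ and $L_j = d_\mathcal{C}$ the hypotheses of Theorem \ref{ellipticdec} are satisfied. The associated Laplacian is $\Delta_{d_\mathcal{C}} = d_\mathcal{C} d_\mathcal{C}^\star + d_\mathcal{C}^\star d_\mathcal{C}$, which is self-adjoint and elliptic of order $2$, and whose kernel in degree $k$ is by definition $\mathcal{H}^k(X)_\mathcal{C}$. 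Part (1) of Theorem \ref{ellipticdec} then yields the orthogonal splitting
$$
\mathcal{A}^k_\mathcal{C}(X) = d_\mathcal{C}\big(d_\mathcal{C}^\star G_k(\mathcal{A}^k_\mathcal{C}(X))\big) \oplus \mathcal{H}^k(X)_\mathcal{C} \oplus d_\mathcal{C}^\star\big(d_\mathcal{C} G_k(\mathcal{A}^k_\mathcal{C}(X))\big).
$$

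The next step is to identify the first and third summands with $d_\mathcal{C}(\mathcal{A}^{k-1}_\mathcal{C}(X))$ and $d_\mathcal{C}^\star(\mathcal{A}^{k+1}_\mathcal{C}(X))$ respectively. The inclusions $\subseteq$ are immediate. For the reverse inclusion on the first summand, I would take an arbitrary $d_\mathcal{C}\beta$ with $\beta \in \mathcal{A}^{k-1}_\mathcal{C}(X)$ and apply the identity $\mathrm{id} = d_\mathcal{C} d_\mathcal{C}^\star G_k + d_\mathcal{C}^\star d_\mathcal{C} G_k + H_k$, which follows from $G_k\Delta_{d_\mathcal{C}} + H_k = \mathrm{id}$ in part (2) together with the commutation of $G_k$ and $\Delta_{d_\mathcal{C}}$. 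The harmonic component $H_k(d_\mathcal{C}\beta)$ vanishes because an exact form is $L^2$-orthogonal to harmonic forms (as $(d_\mathcal{C}\beta, h) = (\beta, d_\mathcal{C}^\star h) = 0$ for a co-closed harmonic $h$), and the term $d_\mathcal{C}^\star d_\mathcal{C} G_k(d_\mathcal{C}\beta)$ vanishes because $G_k$ commutes with $d_\mathcal{C}$ (a consequence of $d_\mathcal{C}\Delta_{d_\mathcal{C}} = \Delta_{d_\mathcal{C}} d_\mathcal{C}$ in part (2)) together with $d_\mathcal{C}^2 = 0$. Hence $d_\mathcal{C}\beta = d_\mathcal{C} d_\mathcal{C}^\star G_k(d_\mathcal{C}\beta)$ lies in the first summand; the argument for the third summand is symmetric, using $(d_\mathcal{C}^\star)^2 = 0$ and the orthogonality of co-exact forms to harmonic forms.

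Finally, the finite dimensionality of $\mathcal{H}^k(X)_\mathcal{C}$ and the isomorphism $H^k_{dR}(X,\mathcal{C}) \simeq \mathcal{H}^k(X)_\mathcal{C}$ are read off directly from part (3) of Theorem \ref{ellipticdec}, since the cohomology $H^k(E_\bullet)$ of the complex \eqref{derhamcomp} is by definition $H^k_{dR}(X,\mathcal{C})$. Concretely, the harmonic representative of a class $[\alpha]$ is $H_k\alpha$, and the injectivity and surjectivity of the map $\mathcal{H}^k(X)_\mathcal{C} \to H^k_{dR}(X,\mathcal{C})$ sending a harmonic form to its class follow from the orthogonal decomposition just established.

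I expect the main obstacle to be the second step, namely verifying that the abstract image summands $d_\mathcal{C} d_\mathcal{C}^\star G_k(\mathcal{A}^k_\mathcal{C}(X))$ and $d_\mathcal{C}^\star d_\mathcal{C} G_k(\mathcal{A}^k_\mathcal{C}(X))$ coincide with the full spaces $d_\mathcal{C}(\mathcal{A}^{k-1}_\mathcal{C}(X))$ and $d_\mathcal{C}^\star(\mathcal{A}^{k+1}_\mathcal{C}(X))$. This rests on the commutation of the Green's operator $G_k$ with $d_\mathcal{C}$ and $d_\mathcal{C}^\star$, which in the $d$-complex setting must be justified from the relations in Theorem \ref{ellipticdec}(2) rather than quoted from the classical case; once that commutation is established, the orthogonality of exact and co-exact forms to harmonic forms makes the identification routine.
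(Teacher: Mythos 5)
Your proposal is correct and follows exactly the route the paper takes: the paper obtains Theorem \ref{formdecomp} by applying the abstract elliptic-complex decomposition, Theorem \ref{ellipticdec}, to the de Rham complex \eqref{derhamcomp}, whose ellipticity was verified in the remark via the symbol computation $\sigma_1(d_\mathcal{C})(\nu)(s)=\sqrt{-1}\,\nu\wedge s$. The only difference is that you spell out the identification of the summands $d_\mathcal{C} d_\mathcal{C}^\star G_k(\mathcal{A}^k_\mathcal{C}(X))$ and $d_\mathcal{C}^\star d_\mathcal{C} G_k(\mathcal{A}^k_\mathcal{C}(X))$ with $d_\mathcal{C}(\mathcal{A}^{k-1}_\mathcal{C}(X))$ and $d_\mathcal{C}^\star(\mathcal{A}^{k+1}_\mathcal{C}(X))$ via the $G$--$d_\mathcal{C}$ commutation, a detail the paper leaves implicit in its citation of Theorem \ref{ellipticdec}.
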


\begin{theorem}\label{formdecomp1}
Let $X$ be a compact $d$-Hermitian manifold of real dimension $2n$. Then, there exist orthogonal decompositions,
\begin{align*}
\mathcal{A}^{(p,q)}(X)&=\partial\big(\mathcal{A}^{p-1,q}_\mathcal{C}(X)\big)\oplus \mathcal{H}^{(p,q)}_\partial
(X)\oplus \partial^\star\big(\mathcal{A}^{p+1,q}_\mathcal{C}(X)\big)\quad\text{ and}\\
\mathcal{A}^{(p,q)}(X)&=\overline{\partial}\big(\mathcal{A}^{p,q-1}_\mathcal{C}(X)\big)\oplus \mathcal{H}^{(p,q)}_{\overline{\partial}}
(X)\oplus \overline{\partial}^\star\big(\mathcal{A}^{p,q+1}_\mathcal{C}(X)\big).
\end{align*}
Furthermore, $\mathcal{H}^k_d,\mathcal{H}_\partial^{(p,q)}$ and $H^{(p,q)}_{\overline{\partial}}$ are finite dimensional along with 
\begin{align*}
H^{(p,q)}_{dB}(X)&\simeq \mathcal{H}^{(p,q)}_\partial\quad\text{ and}\\
H^{(p,q)}_{dB}(X)&\simeq\mathcal{H}^{(p,q)}_{\overline{\partial}}.
\end{align*} 
for all $1\leq p,q\leq n\in \mathbb{Z}$.
\end{theorem}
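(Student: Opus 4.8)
The plan is to deduce both families of decompositions directly from the general elliptic package of Theorem \ref{ellipticdec}, exactly as in the proof of Theorem \ref{formdecomp} for the de Rham complex. First I would fix $p$ and regard the Dolbeault complex \eqref{dolbeaultcomp} as an elliptic complex $(E_q,\overline{\partial})$ with $\mathcal{A}^0_\mathcal{C}(E_q)(X)=\mathcal{A}^{(p,q)}(X)$; its ellipticity was already established in Section \ref{section3}, where the $1$-symbol $\sigma_1(\overline{\partial})(\nu)(s)=\sqrt{-1}\,\nu^{(0,1)}\wedge s$ was computed and the associated symbol sequence (a Koszul-type complex given by exterior multiplication with the nonzero covector $\nu^{(0,1)}$) is exact off the zero section. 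Dually, fixing $q$ and letting $p$ vary yields the complex $(\mathcal{A}^{(p,q)}(X),\partial)$ whose $1$-symbol is $\sigma_1(\partial)(\nu)(s)=\sqrt{-1}\,\nu^{(1,0)}\wedge s$; this is again exact off the zero section, so the $\partial$-complex is elliptic as well. Alternatively, ellipticity of the $\partial$-complex follows from that of the $\overline{\partial}$-complex via the conjugation $c\colon\mathcal{A}^{(p,q)}(X)\to\mathcal{A}^{(q,p)}(X)$, which satisfies $c\circ\overline{\partial}=\partial\circ c$.

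Next, I would apply Theorem \ref{ellipticdec} to each complex. For the $\overline{\partial}$-complex at position $q$ the Laplacian is $\Delta_{\overline{\partial}}=\overline{\partial}^\star\overline{\partial}+\overline{\partial}\,\overline{\partial}^\star$, and part (2) of Theorem \ref{ellipticdec} gives $G_q\Delta_{\overline{\partial}}+H_q=\mathrm{id}$ on $\mathcal{A}^{(p,q)}(X)$, with $H_q$ the orthogonal projection onto $\mathcal{H}^{(p,q)}_{\overline{\partial}}(X)$ and $G_q$ commuting with $\overline{\partial}$ and $\overline{\partial}^\star$. Writing, for $\alpha\in\mathcal{A}^{(p,q)}(X)$,
\[
\alpha=H_q\alpha+\overline{\partial}\big(\overline{\partial}^\star G_q\alpha\big)+\overline{\partial}^\star\big(\overline{\partial} G_q\alpha\big),
\]
I place the three terms in $\mathcal{H}^{(p,q)}_{\overline{\partial}}(X)$, in $\overline{\partial}\big(\mathcal{A}^{p,q-1}_\mathcal{C}(X)\big)$, and in $\overline{\partial}^\star\big(\mathcal{A}^{p,q+1}_\mathcal{C}(X)\big)$ respectively; orthogonality of the three summands is immediate from $\overline{\partial}^2=0$, the definition of the adjoint, and the relations $H_qG_q=0=H_q\Delta_{\overline{\partial}}$. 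This is precisely the second displayed decomposition, and the first is obtained identically from the $\partial$-complex, replacing $\overline{\partial}$ by $\partial$ and the $q$-degree by the $p$-degree.

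Finally, the finite dimensionality of $\mathcal{H}^{(p,q)}_{\overline{\partial}}(X)$ and $\mathcal{H}^{(p,q)}_\partial(X)$ together with the cohomology isomorphisms come from part (3) of Theorem \ref{ellipticdec}, which supplies a canonical isomorphism between the harmonic space and the cohomology of the complex. For the $\overline{\partial}$-complex this cohomology is, by definition \eqref{Dolbcohomology}, exactly $H^{(p,q)}_{dB}(X)$, yielding $H^{(p,q)}_{dB}(X)\simeq\mathcal{H}^{(p,q)}_{\overline{\partial}}$.

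For the $\partial$-complex, part (3) first gives only an isomorphism of $\mathcal{H}^{(p,q)}_\partial$ with the $\partial$-cohomology at $(p,q)$; identifying the latter with $H^{(p,q)}_{dB}(X)$ is the one genuinely nontrivial step, which I would carry out through the conjugation $c$. Since $c$ intertwines $\partial$ and $\overline{\partial}$, it induces a conjugate-linear isomorphism $\mathcal{H}^{(p,q)}_\partial\cong\mathcal{H}^{(q,p)}_{\overline{\partial}}\simeq H^{(q,p)}_{dB}(X)$, the delicate point being the exchange of bidegrees $(p,q)\leftrightarrow(q,p)$, which must be reconciled with the symmetric formulation in the statement. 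The hard part of the whole argument is therefore not the Hodge-theoretic machinery, which is fully supplied by Theorem \ref{ellipticdec}, but rather this bookkeeping identification of the $\partial$-harmonic spaces with the Dolbeault groups, together with the verification that the abstract summands of Theorem \ref{ellipticdec} coincide with the concrete images $\overline{\partial}(\cdots)$ and $\overline{\partial}^\star(\cdots)$ displayed above.
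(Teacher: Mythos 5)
Your route is the paper's route: the paper obtains Theorem \ref{formdecomp1} in one stroke by applying Theorem \ref{ellipticdec} to the Dolbeault complex \eqref{dolbeaultcomp} (whose ellipticity was checked via the symbol computation $\sigma_1(\overline{\partial})(\nu)(s)=\sqrt{-1}\,\nu^{(0,1)}\wedge s$ in Section \ref{section3}) and to its $\partial$-analogue. Your derivation of the two orthogonal decompositions from $\alpha=H_q\alpha+\overline{\partial}\bigl(\overline{\partial}^\star G_q\alpha\bigr)+\overline{\partial}^\star\bigl(\overline{\partial}G_q\alpha\bigr)$ and of $H^{(p,q)}_{dB}(X)\simeq\mathcal{H}^{(p,q)}_{\overline{\partial}}$ from part (3) of Theorem \ref{ellipticdec} is correct and is essentially a more detailed version of the paper's one-line argument.

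The point you flag at the end, however, is not reconcilable bookkeeping, and your hesitation is warranted: it exposes a genuine defect in the statement itself. As you note, Theorem \ref{ellipticdec}(3) identifies $\mathcal{H}^{(p,q)}_\partial$ with the cohomology of the $\partial$-complex at $(p,q)$, and conjugation $c$ (which is globally well defined on a $d$-complex manifold, since conjugating every member of a family preserves the compatibility condition \eqref{d-complexform1}) converts this into a conjugate-linear isomorphism $\mathcal{H}^{(p,q)}_\partial\cong\mathcal{H}^{(q,p)}_{\overline{\partial}}\simeq H^{(q,p)}_{dB}(X)$. There is no way to trade $(q,p)$ back for $(p,q)$ under the mere $d$-Hermitian hypothesis: doing so would force $h^{(p,q)}=h^{(q,p)}$ on every compact $d$-Hermitian manifold, and the paper's own example refutes this, since the Hopf-surface quotient $Y$ carries a $d$-Hermitian metric (average any Riemannian metric against $\mathcal{J}$, as in Section \ref{section2}) yet has $h^{(1,0)}(Y)=0\neq 1=h^{(0,1)}(Y)$. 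So the final isomorphism $H^{(p,q)}_{dB}(X)\simeq\mathcal{H}^{(p,q)}_\partial$ cannot be proved as stated; it should read $\mathcal{H}^{(p,q)}_\partial\simeq H^{(q,p)}_{dB}(X)$, or be asserted only under the $d$-K\"ahler hypothesis, where $\Delta_\partial=\Delta_{\overline{\partial}}$ (Theorem \ref{lapequiv}) gives $\mathcal{H}^{(p,q)}_\partial=\mathcal{H}^{(p,q)}_{\overline{\partial}}$ and the claim becomes true. Since everything the paper subsequently uses (Lemma \ref{lemma5}, Theorem \ref{maintheorem}) operates in the $d$-K\"ahler setting, your version of the proof, with the $\partial$-harmonic space identified with $H^{(q,p)}_{dB}(X)$, supports all downstream applications; the gap lies in the theorem's indexing, not in your machinery.
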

\begin{cor}\label{vanish}
Let $X$ be a compact $d$-complex manifold with $d$-Hermitian metric $h$, then $Im(\partial^\star)\cap Ker(\partial)=\{0\}$
\end{cor}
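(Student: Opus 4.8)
The plan is to obtain the statement directly from the orthogonal decomposition for the $\partial$-complex in Theorem \ref{formdecomp1}, together with the defining adjoint relation for $\partial^\star$. First I would fix a bidegree $(p,q)$ and take an arbitrary $\alpha\in Im(\partial^\star)\cap Ker(\partial)$, so that $\alpha=\partial^\star\beta$ for some $\beta\in\mathcal{A}^{p+1,q}_\mathcal{C}(X)$ while simultaneously $\partial\alpha=0$. The core of the argument is then a single application of the adjoint property: since $\partial^\star$ is the formal adjoint of $\partial$ for the $L^2$-inner product $(\,,\,)$ introduced in Section \ref{section4} (the identity $(\partial\alpha,\beta)=(\alpha,\partial^\star\beta)$ being valid because $X$ is compact and without boundary, so that the Stokes'-theorem computation defining $\partial^\star=-\star\overline{\partial}\star$ applies), I would write
\[
(\alpha,\alpha)=(\alpha,\partial^\star\beta)=(\partial\alpha,\beta)=0,
\]
where the last equality uses $\partial\alpha=0$. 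As the induced inner product on $\mathcal{A}^{(p,q)}(X)$ is positive definite, $(\alpha,\alpha)=0$ forces $\alpha=0$, which is exactly the claim.

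An equivalent route, more in the spirit of the surrounding results, is to read the conclusion off Theorem \ref{formdecomp1}. The decomposition $\mathcal{A}^{(p,q)}(X)=\partial(\mathcal{A}^{p-1,q}_\mathcal{C}(X))\oplus\mathcal{H}^{(p,q)}_\partial(X)\oplus\partial^\star(\mathcal{A}^{p+1,q}_\mathcal{C}(X))$ is orthogonal, and one verifies that $Ker(\partial)$ coincides with the sum of the first two summands. Indeed, writing a general element as $\alpha=\partial\gamma_1+h+\partial^\star\gamma_2$ with $\partial\alpha=0$ gives $\partial\partial^\star\gamma_2=0$, hence $\|\partial^\star\gamma_2\|^2=(\partial\partial^\star\gamma_2,\gamma_2)=0$, so the third component vanishes and $\alpha\in\partial(\mathcal{A}^{p-1,q}_\mathcal{C}(X))\oplus\mathcal{H}^{(p,q)}_\partial(X)$. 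Consequently $Im(\partial^\star)$ can meet $Ker(\partial)$ only in the zero form, by orthogonality of the three summands.

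I do not expect any serious obstacle; the statement is a formal consequence of the Hodge-theoretic machinery already assembled. The only points deserving care are that the adjoint identity $(\partial\alpha,\beta)=(\alpha,\partial^\star\beta)$ genuinely holds on the global compact manifold — guaranteed by the compactness and absence of boundary assumed throughout Section \ref{section4}, which underlie the existence of $\partial^\star$ via Proposition \ref{adjoint} — and that the $L^2$-inner product is positive definite, so that a form of vanishing norm is identically zero. Both the direct computation and the decomposition argument are essentially the same fact viewed from two angles, and either may be presented.
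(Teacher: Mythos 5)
Your proposal is correct, and your primary argument is genuinely different from the paper's. The paper's proof first observes that any $\alpha\in Im(\partial^\star)\cap Ker(\partial)$ also satisfies $\partial^\star\alpha=0$ (implicitly because $\alpha=\partial^\star\beta$ and $\partial^\star\circ\partial^\star=(\partial\circ\partial)^\star=0$), so that $\Delta_\partial\alpha=0$ and $\alpha$ is $\Delta_\partial$-harmonic; it then invokes the orthogonality between $\mathcal{H}^{(p,q)}_\partial(X)$ and $\partial^\star\big(\mathcal{A}^{p+1,q}_\mathcal{C}(X)\big)$ in the decomposition of Theorem \ref{formdecomp1} to conclude $\alpha=0$. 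Your first route bypasses both the harmonicity observation and the decomposition entirely: writing $\alpha=\partial^\star\beta$ and pairing, $(\alpha,\alpha)=(\alpha,\partial^\star\beta)=(\partial\alpha,\beta)=0$, which uses only the formal adjoint identity on a compact boundaryless manifold (Proposition \ref{adjoint} and the Stokes'-theorem computation of Section \ref{section4}) together with positive definiteness of the $L^2$-inner product. This is more elementary than the paper's argument --- it needs none of the elliptic machinery behind Theorem \ref{formdecomp1}, and it would work verbatim for any formally adjoint pair with $\partial\circ\partial=0$ on a compact manifold --- at the cost of being less visibly tied to the Hodge-theoretic package the paper has just assembled. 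Your second route, which identifies $Ker(\partial)$ with the sum of the first two summands of the decomposition and $Im(\partial^\star)$ with the third, is essentially the paper's argument reorganized: the paper places $\alpha$ in $\mathcal{H}^{(p,q)}_\partial(X)\cap\partial^\star\big(\mathcal{A}^{p+1,q}_\mathcal{C}(X)\big)$, whereas you place it in the intersection of the third summand with the span of the first two, but the orthogonality being exploited is the same. Both routes are sound; the only point worth making explicit in a final write-up is that $\alpha$ and $\beta$ are smooth forms, so the adjoint identity applies directly without any passage to Sobolev completions.
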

\begin{proof}
Let $\alpha\in Im(\partial^\star)\cap Ker(\partial)$, we have $\partial(\alpha)=0=\partial^\star(\alpha)$, furthermore $\alpha\in \mathrm{Ker}(\Delta_\partial)$ i.e. $\alpha$ is a $\Delta_\partial$-harmonic form but using the orthogonal decomposition in the theorem \ref{formdecomp1}, we have $\alpha=0$.
\end{proof}
Using Theorems \ref{hodgedecomp},\ref{formdecomp} and \ref{formdecomp1} and the fact that $\mathcal{H}_{\overline{\partial}}^{(p,q)}=\mathcal{H}_\partial^{(p,q)}=\mathcal{H}^{(p,q)}$ for a compact $d$-K\"{a}hler manifold, we have the following: 

\begin{theorem}[Hodge decomposition]\label{mainthm-hodge}
Let $(X,\dholos{X})$ be a compact $d$-K\"{a}hler manifold, then we have the direct sum decomposition,
$$
H^k_{dR}(X,\mathcal{C})=\displaystyle\bigoplus_{p+q=k}H^{(p,q)}_{dB}(X).
$$
\end{theorem}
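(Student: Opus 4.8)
The plan is to chain together the Hodge-theoretic isomorphisms already established, letting the K\"ahler hypothesis enter only through the identification of the various Laplacians. A compact $d$-K\"ahler manifold is in particular a compact $d$-Hermitian manifold, so Theorems \ref{formdecomp} and \ref{formdecomp1} apply, while the K\"ahler condition is used solely via Theorems \ref{lapequiv} and \ref{hodgedecomp}.

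First I would invoke Theorem \ref{formdecomp} to obtain the isomorphism $H^k_{dR}(X,\mathcal{C}) \simeq \mathcal{H}^k(X)_\mathcal{C}$, which identifies each de-Rham class with its unique $\Delta_{d_\mathcal{C}}$-harmonic representative. This step is purely the elliptic decomposition for the de-Rham complex \eqref{derhamcomp} and does not yet use the K\"ahler assumption. Next, the K\"ahler hypothesis enters through Theorem \ref{lapequiv}, which gives $\Delta_{d_\mathcal{C}} = 2\Delta_\partial = 2\Delta_{\overline{\partial}}$; consequently a $d$-smooth $k$-form is $\Delta_{d_\mathcal{C}}$-harmonic precisely when each of its $(p,q)$-components is $\Delta_\partial$- (equivalently $\Delta_{\overline{\partial}}$-) harmonic. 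This is the content of Theorem \ref{hodgedecomp}, yielding the bigrading
$$
\mathcal{H}^k(X)_\mathcal{C} = \bigoplus_{p+q=k} \mathcal{H}^{(p,q)}(X),
$$
where $\mathcal{H}^{(p,q)}(X) = \mathcal{H}^{(p,q)}_\partial(X) = \mathcal{H}^{(p,q)}_{\overline{\partial}}(X)$ by Corollary \ref{lapequivcor} and the equality $\Delta_\partial = \Delta_{\overline{\partial}}$.

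Finally, I would apply the second isomorphism of Theorem \ref{formdecomp1}, namely $\mathcal{H}^{(p,q)}_{\overline{\partial}}(X) \simeq H^{(p,q)}_{dB}(X)$, to each summand. Composing the three isomorphisms produces
$$
H^k_{dR}(X,\mathcal{C}) \simeq \mathcal{H}^k(X)_\mathcal{C} = \bigoplus_{p+q=k}\mathcal{H}^{(p,q)}_{\overline{\partial}}(X) \simeq \bigoplus_{p+q=k}H^{(p,q)}_{dB}(X),
$$
which is the asserted decomposition, the internal direct sum being realized through harmonic representatives.

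The one place where the argument is more than formal bookkeeping is the compatibility of the $(p,q)$-decomposition of a form with harmonicity: one must know that passing to $(p,q)$-components commutes with projection onto harmonic forms, so that the kernel of $\Delta_{d_\mathcal{C}}$ actually splits as a direct sum over bidegrees rather than merely containing each graded piece. This is exactly what Theorem \ref{lapequiv} secures, since the equality of Laplacians forces $\Delta_{d_\mathcal{C}}$ to preserve bidegree (Corollary \ref{lapequivcor}); in a general $d$-Hermitian (non-K\"ahler) setting $\Delta_{d_\mathcal{C}}$ would mix bidegrees and the splitting would fail. Everything else reduces to the functoriality of the elliptic Hodge decomposition already recorded in Theorems \ref{formdecomp} and \ref{formdecomp1}.
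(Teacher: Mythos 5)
Your proposal is correct and follows essentially the same route as the paper, which obtains Theorem \ref{mainthm-hodge} by combining Theorems \ref{hodgedecomp}, \ref{formdecomp}, and \ref{formdecomp1} with the identification $\mathcal{H}^{(p,q)}_\partial(X)=\mathcal{H}^{(p,q)}_{\overline{\partial}}(X)=\mathcal{H}^{(p,q)}(X)$ valid on a compact $d$-K\"ahler manifold. Your only addition is to make explicit the role of Theorem \ref{lapequiv} and Corollary \ref{lapequivcor} in ensuring that $\Delta_{d_\mathcal{C}}$ preserves bidegree, a point the paper leaves implicit inside Theorem \ref{hodgedecomp}.
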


\subsection*{Example}
Note that the Klein surfaces are $d$-K\"{a}hler manifolds. Let us now discuss examples of non $d$-K\"{a}hler non-orientable $d$-complex manifolds.

\begin{definition}\rm{
A compact complex manifold of complex dimension 2, which has universal covering biholomorphic to $W=\mathbb{C}^2-\{(0,0)\}$ and fundamental group isomorphic to $\mathbb{Z}$, is called a primary Hopf surface.

A complex analytic automorphism $f$ on $\mathbb{C}^2$ is called a contraction if $\displaystyle{\lim_{n\to \infty}f^n(B)}=0$, where $B=\{(z_1,z_2)\in \mathbb{C}^2:|z_1|^2+|z_2|^2\leq 1\}\subset \mathbb{C}^2$.
}
\end{definition}

Any primary Hopf surface can be expressed as $\bigslant{\big(\mathbb{C}^2- \{(0,0)\}\big)}{\langle f\rangle}$, where $f:\mathbb{C}^2\rightarrow \mathbb{C}^2$ is a contraction such that $f(0,0)=(0,0)$. 

We can choose an appropriate co-ordinates on $\mathbb{C}^2$, the contraction $f$ has the normal form:
\begin{align*}
f:\mathbb{C}^2- \{(0,0)\}&\longrightarrow \mathbb{C}^2-\{(0,0)\}\\
(z_1,z_2)&\mapsto \left(\frac{\alpha_1z_1+\lambda z_2^m}{\alpha_2z_2}\right)
\end{align*} 
where $m\in \mathbb{Z}^+$ and $\alpha_1,\alpha_2,\lambda$ are constants with the following conditions:
$$
(\alpha_1-\alpha_2^m)\lambda=0,\quad 0<|\alpha_1|\leq |\alpha_2|<1.
$$
For more details on Hopf surfaces, see \cite{KK}.

A primary Hopf surface $\big(\bigslant{\mathbb{C}^2- \{(0,0)\}\big)}{\langle f\rangle}$ can be written as 
$$
H_f=\bigslant{W}{\langle f\rangle}=\bigslant{\big(\mathbb{C}^2- \{(0,0)\}\big)}{\langle f\rangle}
$$ 
with canonical projection map $\pi:W\rightarrow H_f$ for some contraction $f$ on $\mathbb{C}^2- \{(0,0)\}$. In \cite{KK1}, Kunhiko Kodaira has proved that a Hopf surface is topologically homeomorphic to $S^1\times S^3$.

Using the theory of covering space, any holomorphic (respectively, anti-holomorphic) topological morphism $\sigma:H_{f_1}=\bigslant{W}{\langle f_1 \rangle}\rightarrow H_{f_2}=\bigslant{W}{\langle f_2 \rangle}$ between two Hopf surfaces is induced by a holomorphic (respectively, anti-holomorphic) topological automorphism $\widehat{\sigma}$ on $W=\mathbb{C}^2- \{(0,0)\}$ such that $\pi_{f_2}\circ \hat{\sigma}\equiv \sigma\circ \pi_{f_1}$ 

\begin{definition}\rm{
An anti-holomorphic involution on a Hopf surface $H_f$ is an anti-holomorphic topological automorphism $\sigma: H_f\rightarrow H_f$ such that $\sigma^2\equiv id_{H_f}$.
}
\end{definition}
For any anti-holomorphic topological automorphism on a Hopf surface $H_f$, we have a covering transformation $\widehat{\sigma}^2$ on $\mathbb{C}^2-\{(0,0)\}$ but covering transformation group is generated by the contraction, which implies $\widehat{\sigma}^2=f^n$ for some positive integer $n$.
\begin{definition}\rm{
An anti-holomorphic involution $\sigma: H_f\rightarrow H_f$ is called an even (respectively, odd) real structure if $\widehat{\sigma}^2$ corresponds to even (respectively, odd) power of $f$.
}
\end{definition}
Let the contraction $f:W\rightarrow W$ be 
$$
f(z_1,z_2)=\left(\frac{1}{2}z_1,\frac{-1}{2}z_2\right)
$$
and $\sigma_f$ be even real structure on Hopf surface $H_f$, which is equivalent to standard real structure $s_f$ on $H_f$ whose lift on $W=\mathbb{C}^2-\{(0,0)\}$ is standard complex conjugation. The fixed points set of $H_f$ under $\sigma_f$ is topologically homeomorphic to 
$$
\bigslant{\mathbb{R}^2-\{(0,0)\}}{(x,y)\sim \left(\frac{x}{2},-\frac{y}{2}\right)}
$$ 
which is further homeomorphic to Klein bottle.

Hence, $\bigslant{H_f}{\langle\sigma_f\rangle}$ is a non-orientable topological 4-manifold whenever the determinant of the real part of contraction is negative. For more details on the classification of real primary Hopf surfaces, see \cite{ZK}.

Tha complex structure on Hopf surface $H_f$ induces a $d$-complex structure on $\bigslant{H_f}{\langle\sigma_f\rangle}$ 
(see Theorem \ref{dcomphopf}).

 \begin{theorem}\label{dcomphopf}
 Let $(H_f,\dholos{H}_f)$ be a Hopf surface $H_f$ with complex structure $\dholos{H}_f$ and let $G=\{1,\sigma_{H_f}\}$ be the group generated by even real structure $\sigma_{H_f}$ on $H_f$. The quotient space $Y=\bigslant{H_f}{G}$ has a $d$-complex structure $\mathfrak{Y}$.
 \end{theorem}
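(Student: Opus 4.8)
The plan is to build a $d$-holomorphic atlas on $Y=\bigslant{H_f}{G}$ directly from a holomorphic atlas of $H_f$, exploiting that the single nontrivial element $\sigma_{H_f}\in G$ is an \emph{anti}-holomorphic involution; this anti-holomorphicity is exactly what forces some transition maps downstairs to be anti-holomorphic, converting the complex structure on $H_f$ into a $d$-complex structure on $Y$. First I would record that $G=\mathbb{Z}/2\mathbb{Z}$ acts on $H_f$ by the identity (holomorphic) and by $\sigma_{H_f}$ (anti-holomorphic), and that, since $G$ is finite and $H_f$ is Hausdorff, the action is properly discontinuous. Hence the quotient map $\pi\colon H_f\to Y$ is open, $Y$ is second countable and Hausdorff, and $\pi$ is a local homeomorphism away from the fixed-point set $\mathrm{Fix}(\sigma_{H_f})$. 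The topological manifold structure, including near $\pi(\mathrm{Fix}(\sigma_{H_f}))$, will emerge from the charts constructed below.

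Away from the fixed locus the atlas is straightforward. For $y\in Y\setminus\pi(\mathrm{Fix}(\sigma_{H_f}))$ I would choose a preimage $x$ and a holomorphic chart $(U,\phi_U)$ of $H_f$ around $x$ small enough that $U\cap\sigma_{H_f}(U)=\varnothing$; then $\pi|_U$ is a homeomorphism onto $\pi(U)$, and I set $\phi:=\phi_U\circ(\pi|_U)^{-1}\colon\pi(U)\to\mathbb{C}^2$. Given two such charts, the comparison map $(\pi|_{U'})^{-1}\circ\pi|_U$ sends each point either to itself or to its $\sigma_{H_f}$-image, so the induced transition $\phi'\circ\phi^{-1}$ equals either a holomorphic transition $\tau_{U'U}=\phi_{U'}\circ\phi_U^{-1}$ of $H_f$ or the anti-holomorphic map $\phi_{U'}\circ\sigma_{H_f}\circ\phi_U^{-1}$ (anti-holomorphic because the charts of $H_f$ are holomorphic and $\sigma_{H_f}$ is anti-holomorphic). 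In the first case the associated orientation cocycle is $+1$ and in the second it is $-1$, so on the interior the cocycle maps lie in $\mathrm{GL}_2(\mathbb{C})$, exactly as a $d$-holomorphic atlas requires.

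The crux — and the step I expect to be the main obstacle — is charting a neighbourhood of $\pi(\mathrm{Fix}(\sigma_{H_f}))$ compatibly with the interior charts. The plan is to linearise: at a fixed point $x$ one shows that an anti-holomorphic involution admits a holomorphic coordinate $z=(z_1,z_2)$ centred at $x$ in which $\sigma_{H_f}$ is the standard conjugation $z\mapsto\bar z$; here $d(\sigma_{H_f})_x$ is an anti-linear involution, which one normalises by a Gram--Schmidt-type averaging argument in the spirit of the coordinate choice in Theorem \ref{fundaform}, after which the higher-order terms are absorbed by passing to $\tfrac12\bigl(z+\overline{\sigma_{H_f}(z)}\bigr)$. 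In this model $\mathrm{Fix}(\sigma_{H_f})$ is the totally real locus $\{\bar z=z\}$, which plays the role of the boundary stratum $\partial(U_i\cap U_j)$ on which the transition cocycles are forced to be \emph{real}, i.e. to lie in $\mathrm{GL}_2(\mathbb{R})$, matching the cocycle conventions of Section \ref{section2}. The delicate points are to produce, from the conjugation model, a homeomorphism of a full neighbourhood of such a point onto an open subset of $\mathbb{C}^2$, and to verify that its overlaps with the interior charts are again holomorphic or anti-holomorphic on each connected component; this is where the non-orientability enters and where the real structure $\sigma_{H_f}$ must be tracked most carefully. Once these charts are in place, their union with the interior charts is a $d$-holomorphic atlas on $Y$, and its unique maximal extension is the desired $d$-complex structure $\mathfrak{Y}$.
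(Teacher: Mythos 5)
Your interior charts and your linearisation of $\sigma_{H_f}$ at a fixed point are both sound, and up to that point you follow the paper's route: the paper's Case~1 is your interior construction essentially verbatim, and where you normalise the anti-linear involution $d(\sigma_{H_f})_x$ and absorb higher-order terms by averaging, the paper instead invokes \cite[Theorem 3.6]{ZK} to put $\sigma_{H_f}$ into standard-conjugation form on a chart; your local averaging argument is a legitimate, more self-contained substitute. But your proof stops exactly where the content of the theorem lies. You write that ``the delicate points are to produce, from the conjugation model, a homeomorphism of a full neighbourhood of such a point onto an open subset of $\mathbb{C}^2$'' with dianalytic overlaps --- and you never produce it. Since everything else is routine, this is not a deferred verification but the entire difficulty: as it stands the proposal is a reduction to the hard case, not a proof.

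Moreover, the step you deferred cannot be carried out in the way you envision, and your guiding analogy is the reason. You treat $\pi(\mathrm{Fix}(\sigma_{H_f}))$ as ``the boundary stratum $\partial(U_i\cap U_j)$,'' i.e.\ you picture the Klein-surface fold. That picture is correct only in complex dimension $1$, where the fixed locus of an anti-holomorphic involution has real codimension $1$. Here $\mathrm{Fix}(\sigma_{H_f})$ is totally real of real dimension $2$ inside a $4$-manifold: in the conjugation model the quotient of a ball is $\mathbb{R}^2\times(\mathbb{R}^2/\pm)$, a topological manifold \emph{without} boundary (locally $\mathbb{R}^2\times \mathrm{cone}(S^1)\cong\mathbb{R}^4$), so it is not even homeomorphic to a relatively open subset of the half-space $\mathbb{C}^+\times\mathbb{C}$, and no fold along $\{\mathrm{Im}\,z_1=0\}$ can chart it. For what it is worth, the paper's own Case~2 stumbles at exactly this point: its folding map $\rho$, which conjugates when $\mathrm{Im}\,z_1<0$, is not $\sigma$-invariant at points with $\mathrm{Im}\,z_1=0$ and $\mathrm{Im}\,z_2\neq 0$ --- the conjugate pair $(a,z_2)$, $(a,\bar z_2)$ is identified in $Y$ but separated by $\rho$ --- so the chart $\phi'_x$ is not well defined on $\bigslant{U_x}{G}$. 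Worse, no repair within this strategy is possible: if $\psi$ were any chart around $\pi(x)$ having holomorphic-or-anti-holomorphic transitions with your interior charts, then $g=\psi\circ\pi$ would be continuous on a small $\sigma$-invariant ball $U$, dianalytic near each point of the connected set $U\setminus\mathrm{Fix}(\sigma_{H_f})$, hence (the dianalyticity type being locally constant, and the totally real fixed locus being a removable singularity for continuous functions) holomorphic or anti-holomorphic on all of $U$; but $g\circ\sigma_{H_f}=g$ then forces $g$ to be both at once, hence constant, contradicting that $g$ is two-to-one off the fixed locus. So the point you flagged as delicate is a genuine obstruction to this whole construction (the paper's included), not a technicality one can wave through.
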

 \begin{proof}
 The group $G$ acts properly discontinuous on Hopf surface which implies $\bigslant{X}{G}$ will be a Hausdorff space \cite[Chapter 1, Proposition 1.1.2]{EBJJEJMGGG}
 
 Let $x\in X$ such that
 \begin{enumerate}[label=\textbf{Case:\arabic*.}]
 \item $x\neq \sigma_{H_f}(x)$
 
 Using \cite[Proposition 1.1.1]{EBJJEJMGGG}, we can have chart $(U_x,\phi_x)$ around the point $x\in X$ such that $U_x\cap \sigma_X(U_x)=\phi$ and $\phi_x(x)=0$. Take $V_x=\{[x]\in \bigslant{X}{G}:x\in U_x\}$ and $\phi'_x([x])=\phi_x(x)$. Then $\phi'_x(V_x)$ is homeomorphic to some open subset in $\mathbb{C}^2$ with $\phi'_x([x])=0$. Also, note that $\phi'_x$ is holomorphic as $\phi_x$ is holomorphic.
 \item $x=\sigma_{H_f}(x)$
 
 Let $(U_x,\phi_x)$ be analytic chart around $x\in H_f$ such that $\phi_x(x)=0$. By possibly shrinkig and using \cite[Theorem 3.6]{ZK}, there is a holomorphic automorphism $g:H_f\rightarrow H_f$ such that $g\circ \sigma_{H_f}\circ g^{-1}:U_x\rightarrow U_x$ is standard real structure. Furthermore, the map $\phi_x\circ g\circ \sigma\circ g^{-1}\circ \phi_x^{-1}:V_x\rightarrow V_x$ is complex conjugation for some open subset $V_x\subset \mathbb{C}^2$ homeomorphic to $U_x$.
 \[\begin{tikzcd}[ampersand replacement=\&]
	{U_x} \& {U_x} \& {V_x} \\
	{U_x} \& {U_x} \& {V_x}
	\arrow["g", from=1-1, to=1-2]
	\arrow["\sigma"', from=1-1, to=2-1]
	\arrow["{\phi_x}", from=1-2, to=1-3]
	\arrow[from=1-3, to=2-3]
	\arrow["g"', from=2-1, to=2-2]
	\arrow["{\phi_x}"', from=2-2, to=2-3]
\end{tikzcd}\]
Let $\rho:\mathbb{C}^2\rightarrow \mathbb{C}^+\times \mathbb{C}$ be the map given by
\[
    \rho(a+bi,a_1+b_1i) = \left\{\begin{array}{lr}
        (a+bi,a_1+b_1i), & \text{if } b\geq 0\\
        (a-bi,a_1-b_1i), & \text{if } b<0
        \end{array}\right.
  \]
then the map $\rho\circ \phi_x\circ g:U_x\rightarrow \mathbb{C}^+\times \mathbb{C}$ is $\sigma$ invariant. Which further implies the map $\phi'_x=\rho\circ \phi_x^U\circ g:\bigslant{U_x}{G}\rightarrow \mathbb{C}^+\times \mathbb{C}$ is well defined.
\end{enumerate}
Hence, we have an atlas $\mathcal{V}=\{(V_x,\phi'_x)\}_{[x]\in \bigslant{X}{G}}$ on $Y=\bigslant{X}{G}$, which is a $d$-holomorphic atlas as described below.

Take two charts $(V_x,\phi'_x)$ and $(V_{x_1},\phi'_{x_1})$ around the points $[x]\neq [x_1]$ such that $[y]\in V_x\cap V_{x_1}\neq \phi$, then
\begin{enumerate}[label=\textbf{Case:\arabic*.}]
 \item if $x$ as wells as $x_1$ are not $\sigma_X$ fixed points, then we have
 $$
 g_x\circ g_{x_1}^{-1}([y])=\phi_x\circ \phi_{x_1}^{-1}(y)
 $$
 which is holomorphic;
 \item if any one of, say $x$ is a $\sigma_X$ fixed point, then we have
 $$
 g_x\circ g_{x_1}^{-1}([y])=\phi\circ h_x\circ g_x'\circ \phi_x^{-1}(y)
 $$
 which is either holomorphic or anti-holomorphic on each connected component of $V_x\cap V_{x_1}$.
 \item if both $x$ as wells as $x_1$ are $\sigma_X$ fixed points, we have
 $$
 \phi'_x\circ {\phi'_{x_1}}^{-1}([y])=\rho\circ \phi_x\circ g\circ g^{-1}\circ \phi_{x_1}^{-1}(y)
 $$
 which is holomorphic or anti-holomorphic on each connected component of $V_x\cap V_{x_1}$.
 \end{enumerate} 
Hence the atlas $\{(V_x,\phi'_x\}_{[x]\in Y}$ is a $d$-holomorphic atlas.
 \end{proof}

 The \emph{Hodge number} $h^{(p,q)}(Y)$ is defined as
 $$
 h^{(p,q)}(Y)=\text{dim}_\mathbb{R} H^{(p,q)}_{dR}(Y)\quad(1\leq p,q\leq 2)
 $$
 From \cite[Theorem 4.5]{SW1}, $H^{(p,q)}_{dR}(X)\simeq \mathbb{C}\otimes_\mathbb{R}H^{(p,q)}_{dR}(Y)$, which implies
 $$
 h^{(p,q)}(Y)=\text{dim}_{\mathbb{R}}H^{(p,q)}_{dR}(Y)=\text{dim}_{\mathbb{C}}\mathbb{C}\otimes H^{(p,q)}_{dR}(Y)=\text{dim}_{\mathbb{C}}H^{(p,q)}_{dR}(X)=h^{(p,q)}(X)\quad(1\leq p,q\leq 2)
 $$
 
\[\begin{tikzcd}[ampersand replacement=\&,column sep=small]
	\&\& {h^{(0,0)}(Y)=1} \\
	\& {h^{(1,0)}(Y)=0} \&\& {h^{(0,1)}(Y)=1} \\
	{h^{(2,0)}(Y)=0} \&\& {h^{(1,1)}(Y)=0} \&\& {h^{(0,2)}(Y)=0} \\
	\& {h^{(2,1)}(Y)=1} \&\& {h^{(1,2)}(Y)=0} \\
	\&\& {h^{(2,2)}(Y)=1}
	\arrow[no head, from=1-3, to=2-2]
	\arrow[no head, from=1-3, to=2-4]
	\arrow[dashed, no head, from=1-3, to=3-3]
	\arrow[no head, from=2-2, to=3-1]
	\arrow[no head, from=2-4, to=3-5]
	\arrow[no head, from=3-1, to=4-2]
	\arrow[dashed, no head, from=3-3, to=5-3]
	\arrow[no head, from=3-5, to=4-4]
	\arrow[no head, from=4-2, to=5-3]
	\arrow[no head, from=4-4, to=5-3]
\end{tikzcd}\]
 
Since Hodge numbers are not symmetric about the vertical dotted line, the quotient space $Y=\bigslant{H_f}{\langle \sigma_f \rangle}$ is an example of non-orientable, non $d$-K\"{a}hler manifold; where $H_f=\bigslant{\mathbb{C}^2-\{(0,0)\}}{\langle f \rangle}$ is a Hopf surface, $f:\mathbb{C}^2\rightarrow \mathbb{C}^2$ is contraction given by $f(z_1,z_2)=(\frac{1}{2}z_1,\frac{-1}{2}z_2)$ and $\sigma_f$ is even real structure on $H_f$.

\section{$d$-holomorphic connections and Chern classes}\label{section5}
The notion of $d$-holomorphic connection on $d$-holomorphic vector bundles on Klein surfaces ($d$-complex manifold of complex dimension $1$) is studied in \cite{SAAJ}. In this section, we prove that if a $d$-holomorphic bundle on a compact $d$-K\"{a}hler manifold admits a $d$-holomorphic connection, then all of its Chern classes will vanish (see Theorem \ref{maintheorem}). 

For two $d$-holomorphic bundles $\mathcal{E},\mathcal{F}$ on a compact $d$-complex manifold $(X,\dholos{X})$ and a first order differential operator $P\in Hom_{\mathcal{O}_X^{dh}}(E,F)$ there is a map $\sigma_1(P):{T^\star}^{(1,0)}\rightarrow Hom_{\mathcal{O}_X^{dh}}(E,F)$. For $\nu=\{\nu_{U_i}\}_{i\in I}\in T^{(1,0)}(X)$ and $s=\{s_{U_i}\}_{i\in I}\in E(X)$ along with $f=\{f_{U_i}\}_{i\in I}\in \mathcal{O}_X^{dh}(X)$ such that $d(f_{U_i})=\nu_{U_i}$ $(i\in I)$, the map $\sigma_1(P)$ is given by $\sigma_1(P)_{U_i}(d\nu_{U_i})(s_{U_i})=P(f_{U_i}-f_{U_i}(x))(s_{U_i})$ in a chart ${(U_i,\phi_i)}$ around a point $x\in X$, known as the symbol map. There is an exact sequence known as the symbol exact sequence
$$
0\rightarrow \mathcal{H}om_{\mathcal{O}_X^{dh}}(\mathcal{E},\mathcal{F})\hookrightarrow \mathcal{D}iff_1(\mathcal{E},\mathcal{F})\xrightarrow{\sigma_1}T^{(1,0)}\otimes \mathcal{H}om_{\mathcal{O}_X^{dh}}(\mathcal{E},\mathcal{F}). 
$$
The symbol exact sequence further induces Atiyah exact sequence for a $d$-holomorphic bundle $E$ on a compact $d$-complex manifold 
$$
0\rightarrow \mathcal{E}nd_{\mathcal{O}_X^{dh}}(\mathcal{E})\rightarrow \mathcal{A}t_d(\mathcal{E})\rightarrow T^{(1,0)}\rightarrow 0
$$
where $\mathcal{A}t_d(\mathcal{E})(U)$ is the collection of those first order differential operators whose symbol map is global sections of $T^{(1,0)}$ considering $T^{(1,0)}$ as an $\mathcal{O}_X^{dh}$-submodule of $T^{(1,0)}\otimes \mathcal{E}nd_{\mathcal{O}_X^{dh}}(\mathcal{E})$.
 
The extension class associated with the Atiyah exact sequence for $d$-holomorphic bundle on a compact $d$-complex manifold is called the Atiyah class. Following the same line of arguments as in \cite[Proposition 21]{SAAJ}, it can be proved that the curvature $(1,1)$ form associated to a smooth $d$-complex connection compatible with $d$-holomorphic structure of the bundle is negative of the Atiyah class of the $d$-holomorphic bundle. 

\begin{lemma}\label{lemma5}
Let $X$ be a compact $d$-K\"{a}hler manifold and $\omega\in \mathcal{A}^{(p,q)}(X)$ be a $d_\mathcal{C}$-closed form of type $(p,q)$. If $\omega$ is $\overline{\partial}$-exact, there exist a $d$-complex $(p-1,q-1)$ form $\phi\in \mathcal{A}^{(p-1,q-1)}(X)$ such that $\omega=\partial \overline{\partial}\phi$.
\end{lemma}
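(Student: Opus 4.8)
The plan is to adapt the classical $\partial\overline{\partial}$-lemma to the $d$-K\"ahler setting, the engine being the K\"ahler identities of Theorem \ref{Kahleriden}, the Laplacian coincidence $\Delta_{d_\mathcal{C}}=2\Delta_\partial=2\Delta_{\overline{\partial}}$ of Theorem \ref{lapequiv} (so that all three Laplacians share one harmonic space, one Green operator $G$, and one harmonic projector $H$), and the Hodge--Green machinery recorded in Theorems \ref{ellipticdec} and \ref{formdecomp1}. Before starting I would record two consequences of the K\"ahler identities that the computation needs: the anticommutation $\overline{\partial}\,\partial^\star+\partial^\star\overline{\partial}=0$, which drops straight out of \eqref{4.3} together with $\overline{\partial}^2=0$; and the fact that the common Green operator $G$ commutes with each of $\partial,\overline{\partial},\partial^\star$, since by Theorem \ref{ellipticdec}(2) these commute with the Laplacian.

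First I would use the hypothesis that $\omega$ is $\overline{\partial}$-exact to write $\omega=\overline{\partial}\eta$ with $\eta\in\mathcal{A}^{(p,q-1)}(X)$, and then apply the Hodge decomposition for $\Delta_\partial$ to $\eta$. Writing $\eta=H\eta+G\Delta_\partial\eta$ (Theorem \ref{ellipticdec}(2)) and expanding $\Delta_\partial=\partial\partial^\star+\partial^\star\partial$ while commuting $G$ past $\partial$ and $\partial^\star$ gives
$$
\eta=H\eta+\partial\partial^\star G\eta+\partial^\star\partial G\eta.
$$
Applying $\overline{\partial}$ produces three terms. The harmonic term dies because $H\eta$ is $\partial$-harmonic, hence $\overline{\partial}$-harmonic by Theorem \ref{lapequiv}, so $\overline{\partial}H\eta=0$. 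The middle term is already of the right shape: using $\overline{\partial}\partial=-\partial\overline{\partial}$ it becomes $\partial\overline{\partial}\phi$ with
$$
\phi:=-\partial^\star G\eta\in\mathcal{A}^{(p-1,q-1)}(X),
$$
the bidegree being correct since $G$ preserves type (Corollary \ref{lapequivcor}) and $\partial^\star$ lowers the holomorphic degree by one. The last term vanishes: the anticommutation relation and $\overline{\partial}\partial=-\partial\overline{\partial}$ rewrite $\overline{\partial}\partial^\star\partial G\eta=\partial^\star\partial\,\overline{\partial}G\eta=\partial^\star\partial G\omega$, and since $\omega$ is $d_\mathcal{C}$-closed it is in particular $\partial$-closed, so $\partial G\omega=G\partial\omega=0$. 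Collecting the one surviving term gives $\omega=\partial\overline{\partial}\phi$.

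The main obstacle is not the three-line bookkeeping above but securing the operator identities it rests on inside the $d$-complex framework: one must be sure that the coincidence $\Delta_\partial=\Delta_{\overline{\partial}}$ genuinely forces a single Green operator commuting with all four first-order operators, and that the anticommutator $\{\overline{\partial},\partial^\star\}$ vanishes, both of which ultimately reduce to the chart-by-chart validity of the K\"ahler identities under the holomorphic/anti-holomorphic gluing used in Theorem \ref{Kahleriden}. Care is also needed that the harmonic projector and Green operator of Theorem \ref{ellipticdec}, stated there for a single elliptic complex $(E_j,L_j)$, are applied consistently to the $\partial$- and $\overline{\partial}$-complexes of Theorem \ref{formdecomp1}; once this compatibility is in place the argument closes.
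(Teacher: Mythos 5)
Your argument is correct, and at bottom it follows the paper's skeleton: both proofs write $\omega=\overline{\partial}\eta$, split $\eta$ by the Hodge decomposition for $\Delta_\partial$, kill the harmonic piece using $\mathcal{H}^{(p,q-1)}_\partial=\mathcal{H}^{(p,q-1)}_{\overline{\partial}}$, and turn the $\partial$-image piece into $\partial\overline{\partial}\phi$ via $\overline{\partial}\partial=-\partial\overline{\partial}$. Indeed your identity $\eta=H\eta+\partial(\partial^\star G\eta)+\partial^\star(\partial G\eta)$ is precisely the paper's decomposition $\eta=\alpha+\partial\beta+\partial^\star\gamma$ from Theorem \ref{formdecomp1}, with $\alpha=H\eta$, $\beta=\partial^\star G\eta$, $\gamma=\partial G\eta$ named explicitly --- which buys you the closed formula $\phi=-\partial^\star G\eta$ that the paper does not exhibit. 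The genuine divergence is in how the residual term is eliminated. The paper stays inside the decomposition: from $\partial\omega=0$ and $\partial\partial\overline{\partial}\beta=0$ it gets $\partial\big(\partial^\star\overline{\partial}\gamma\big)=0$ and then quotes Corollary \ref{vanish}, $\mathrm{Im}(\partial^\star)\cap \mathrm{Ker}(\partial)=\{0\}$, which is an immediate consequence of orthogonality in Theorem \ref{formdecomp1}. You instead rewrite $\overline{\partial}\partial^\star\partial G\eta=\partial^\star\partial\,\overline{\partial}G\eta=\partial^\star\partial G\omega$ and use $\partial G\omega=G\partial\omega=0$; this is slightly heavier, because Theorem \ref{ellipticdec}(2) records only $L\Delta=\Delta L$, so the commutations $G\partial=\partial G$, $G\overline{\partial}=\overline{\partial}G$, $G\partial^\star=\partial^\star G$ require the standard (short) supplement that $\Delta$-harmonic forms are annihilated by $\partial$, $\partial^\star$, $\overline{\partial}$ on a compact manifold and that $H\partial G=0$ by orthogonality to harmonics --- exactly the delicate point you flagged. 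Once that one-line justification is supplied, your proof closes, and the two arguments produce the same $\phi$ up to sign; the paper's version needs marginally less operator machinery, while yours yields an explicit Green-operator formula for $\phi$.
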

\begin{proof}
Since $\omega$ is $d_\mathcal{C}$-closed and $\overline{\partial}$-exact, implies $\omega$ is $\partial$ and $\overline{\partial}$-closed. Let $\omega=\overline{\partial}(\eta)$ for some $\eta\in \mathcal{A}^{(p,q-1)}(X)$. 

Using Hodge theory for $d$-complex manifold, we have a decomposition
$$
\eta=\alpha+\partial \beta+\partial^\star \gamma,
$$
with $\alpha\in \mathcal{H}^{(p,q-1)}_\partial(X)$, $\beta\in \mathcal{A}^{(p-1,q-1)}(X)$ and $\gamma\in \mathcal{A}^{(p+1,q-1)}(X)$. Since $X$ is $d$-K\"{a}hler, we have $\mathcal{H}_\partial^{(p,q)}(X)=\mathcal{H}_{\overline{\partial}}^{(p,q)}(X)$, which implies $\alpha\in \mathcal{H}_{\overline{\partial}}^{(p,q-1)}(X)\simeq H_{dB}^{(p,q-1)}(X)$ i.e. $\alpha$ is $\overline{\partial}$-closed. We have 
\begin{align*}
\omega&=\overline{\partial}\eta=\overline{\partial}\alpha+\overline{\partial}\partial \beta+\overline{\partial}\partial^\star \gamma\\
&=\overline{\partial}\partial\beta+\overline{\partial}\partial^\star \gamma.
\end{align*}
Using $\overline{\partial}\partial^\star=-\partial^\star \overline{\partial}$ (follows from \ref{4.3}) and $\overline{\partial}\partial=-\partial\overline{\partial}$, we have
$$
\omega=-\partial\overline{\partial}\beta-\partial^\star\overline{\partial}\gamma.
$$
Since, $\partial(\omega)=0$ and $\partial \partial \overline{\partial}(\beta)=0$, we have $\partial \partial^\star \overline{\partial}\gamma=0$. Using corollary \ref{vanish} we have $\partial^\star \overline{\partial}\gamma=0$. 

Furthermore, $\omega=-\partial\overline{\partial}\beta\in Im(\partial\overline{\partial})$. So, $Im(\overline{\partial})\cap Ker(\partial)\subset Im(\partial\overline{\partial})$.

It is easy to prove that $Im(\partial\overline{\partial})\subset Ker(\partial)\cap Im(\overline{\partial})$. Hence, we have $$
Im(\overline{\partial})\cap Ker(\partial)= Im(\partial\overline{\partial})
$$
\end{proof}

The following result is a generalization of \cite[Section 3, Theorem 4]{MA} (see also \cite[Theorem 3.6]{IBNR}).
We refer to \cite{SW1} for the definition of Chern classes for $d$-complex vector bundles over a $d$-complex manifolds.

\begin{theorem}\label{maintheorem}
Let $\mathcal{E}$ be a $d$-holomorphic vector bundle on a compact $d$-K\"{a}hler manifold $X$. Suppose the bundle $\mathcal{E}$ admits a $d$-holomorphic connection  i.e. $at_d(\mathcal{E})=0$, then all the chern classes $c_{2i}\in H^{4i}(X,\mathbb{R})$ $\big(\text{respectively, }c_{2i+1}\in H^{4i+2}(X,\mathcal{L})\big)$ of $\mathcal{E}$ are zero.
\end{theorem}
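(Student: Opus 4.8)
The plan is to transport Atiyah's argument to the $d$-complex setting, using the $\partial\overline\partial$-lemma proved in Lemma \ref{lemma5} as the essential input. First I would fix a smooth $d$-complex connection $\nabla$ on $\mathcal{E}$ that is compatible with the $d$-holomorphic structure (for instance the $d$-Chern connection attached to a $d$-Hermitian metric, which exists since every $d$-complex bundle carries such a metric), and write $R = \nabla^2 \in \mathcal{A}^{(1,1)}(\mathcal{L}\otimes\mathcal{E}nd(\mathcal{E}))(X)$ for its curvature, a form of pure type $(1,1)$. By the $d$-complex analogue of \cite[Proposition 21]{SAAJ} recalled above, the class of $R$ in the $\mathcal{E}nd(\mathcal{E})$-valued Dolbeault group $H^{(1,1)}_{dB}(X,\mathcal{E}nd\,\mathcal{E})$ equals $-at_d(\mathcal{E})$. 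The hypothesis that $\mathcal{E}$ admits a $d$-holomorphic connection is exactly $at_d(\mathcal{E})=0$, so $R$ is $\overline\partial$-exact: $R = \overline\partial\theta$ for some $\theta \in \mathcal{A}^{(1,0)}(\mathcal{E}nd\,\mathcal{E})(X)$.

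Next I would pass to Chern--Weil representatives. Setting $p_k := \mathrm{tr}\big((\tfrac{i}{2\pi}R)^k\big) \in \mathcal{A}^{(k,k)}(\mathcal{L}^{\otimes k})(X)$, the Bianchi identity gives $d_\mathcal{C} p_k = 0$, and by Newton's identities each Chern class $c_k(\mathcal{E})$ is a fixed polynomial with vanishing constant term in the de Rham classes $[p_1],\dots,[p_k]$. The key observation is that every $p_k$ is $\overline\partial$-exact: since $\overline\partial R = 0$ and $\theta$ has odd form-degree, graded Leibniz together with cyclicity of the trace gives $p_k = (\tfrac{i}{2\pi})^k\,\overline\partial\,\mathrm{tr}(\theta R^{k-1})$. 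Thus each $p_k$ is a $d_\mathcal{C}$-closed, $\overline\partial$-exact form of type $(k,k)$, so Lemma \ref{lemma5} produces $\phi_k$ with $p_k = \partial\overline\partial\phi_k = d_\mathcal{C}(\overline\partial\phi_k)$; hence $[p_k]=0$ in the ($\mathcal{L}^{\otimes k}$-twisted) de Rham cohomology. Any polynomial in the $[p_k]$ with no constant term then vanishes, giving $c_k(\mathcal{E})=0$ for all $k\geq 1$. The parity of $k$ fixes the coefficient bundle: because $\mathcal{L}^{\otimes 2}\cong\mathbb{R}$ is trivial, one gets $c_{2i}\in H^{4i}(X,\mathbb{R})$ and $c_{2i+1}\in H^{4i+2}(X,\mathcal{L})$, as stated.

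The main obstacle I anticipate is the $\mathcal{L}$-twisting in the odd-degree classes. Lemma \ref{lemma5} is phrased for untwisted $d$-complex forms, whereas for odd $k$ the form $p_k$ takes values in the nontrivial flat bundle $\mathcal{L}$, so one must first confirm that the $\partial\overline\partial$-lemma survives for $\mathcal{A}^{(p,q)}(\mathcal{L})(X)$. Since $\mathcal{L}$ is flat with the canonical flat connection $d_\mathcal{L}$ of Remark \ref{exterior-dc}, and the whole Hodge package (the $L^2$ inner products, the Laplacians, the orthogonal decomposition of Theorem \ref{formdecomp1}, and Corollary \ref{vanish}) was built with $\mathcal{L}$-coefficients, the proof of Lemma \ref{lemma5} should transport verbatim to the $\mathcal{L}$-valued case; this is the step to spell out with care. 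A secondary, more routine point is to check that the trace and the Chern--Weil formalism are compatible with the anti-holomorphic gluing, which is precisely what inserts the factor $i^k$ and hence records the $\mathcal{L}^{\otimes k}$-twist used above.
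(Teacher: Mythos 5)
Your proposal is correct and takes essentially the same route as the paper: vanishing of $at_d(\mathcal{E})$ makes the curvature $\overline{\partial}$-exact, the Chern--Weil forms are transgressed to $\overline{\partial}$-exact, $d_\mathcal{C}$-closed forms of type $(k,k)$, and Lemma \ref{lemma5} upgrades them to $d_\mathcal{C}$-exact, killing the de Rham classes --- the paper just works with arbitrary $\mathrm{GL}_n(\mathbb{C})$-invariant homogeneous polynomials $f(R)$ (taking elementary symmetric functions at the end) where you use power traces $\mathrm{tr}(R^k)$ with the explicit transgression and Newton's identities, a cosmetic difference. Your anticipated obstacle about an $\mathcal{L}$-twisted version of Lemma \ref{lemma5} is already absorbed by the paper's conventions: the spaces $\mathcal{A}^{(p,q)}(X)$ are defined by local families that conjugate under anti-holomorphic transitions, so $f(R)$ (and your $p_k$) lies in $\mathcal{A}^{(k,k)}(X)$ as it stands and the lemma applies verbatim, with the $i^k$ normalization producing the $\mathbb{R}$- versus $\mathcal{L}$-valued split only at the level of the final cohomology classes.
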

\begin{proof}
Since $\mathcal{E}$ admits a $d$-holomorhic connection, it follows that the Atiyah class $at_d(\mathcal{E})=0$. This
further implies that the Dolbeault cohomology class associated to the curvature, i.e. 
$[R^{(1,1)}]\in H^{(1,1)}(X,\mathcal{E}nd_{\mathcal{O}_X^{dh}}(\mathcal{E}))$ vanishes. In other words, there is a 
$\overline{\partial}$-closed form 
$\mathfrak{r}\in \mathcal{A}^{(1,0)}\big(\mathcal{E}nd_{\mathcal{O}_X^{dh}}(\mathcal{E})\big)(X)$ such that 
$\overline{\partial}_E(\mathfrak{r})=R^{(1,1)}$, where $\overline{\partial}_E$ is the Cauchy-Riemann operator 
for the bundle associated to the sheaf $\mathcal{E}nd_{\mathcal{O}_X^{dh}}(\mathcal{E})$ (see \cite[Definition 8]{SAAJ}). 

Let $(U_\alpha,\phi_\alpha)$ be a chart on $d$-complex manifold $(X,\dholos{X})$, $(s_{\alpha i})_{i=1}^n$ be fixed frame for $E_{U_\alpha}$ and $\Omega_\alpha$ be curvature form for the unique unitary connection compatible with $d$-holomorphic structure of $E$.  Then, we have
$$
\mathfrak{r}_\alpha=\mathfrak{r}\big|_{U_\alpha}(s_{\alpha j})=\displaystyle\sum_{i=1}^n\mathfrak{r}_{\alpha ij}\otimes s_{\alpha i}\quad \text{where, }\mathfrak{r}_{\alpha ij}\in \mathcal{A}^{(1,0)}(U_\alpha)
$$
with the compatibility condition,
\[
\mathfrak{r}_\beta(x)=\left\{\begin{array}{lr}
        \left(\cocycle{\alpha}{\beta}{E}\right)^{-1}(x)\mathfrak{r}_\alpha(x)\cocycle{\alpha}{\beta}{E}(x), & \text{ if }\trans{\alpha}{\beta}\text{ is holomorphic}\vspace{7pt}\\
        \left(\cocycle{\alpha}{\beta}{E}\right)^{-1}(x)\overline{\mathfrak{r}}_\alpha(x)\overline{\cocycle{\alpha}{\beta}{E}}(x), & \text{ if }\trans{\alpha}{\beta}\text{ is anti-holomorphic}
        \end{array}\right.\quad (\text{for }x\in U_\alpha\cap U_\beta)
\]
The equality $R=\overline{\partial}_E(\mathfrak{r})$ implies that $\Omega_\alpha=\overline{\partial}\mathfrak{r}_\alpha$.

Consider a $Gl_n(\mathbb{C})$-invariant homogenous polynomial $f:\mathfrak{gl}_n(\mathbb{C})\rightarrow \mathbb{C}$ of degree $p$. Then $f(R)$ can be expressed as a family $\{f(\Omega_\alpha)\}_{\alpha\in I}$ with respect to some $d$-holomorphic atlas $\mathcal{U}=\{(U_\alpha,\phi_\alpha)\}_{\alpha\in I}$ with the compatibility condition,
\[
f(\Omega_\beta)=\left\{\begin{array}{lr}
        f(\Omega_\alpha), & \text{ if }\trans{\alpha}{\beta}\text{ is holomorphic}\vspace{7pt}\\
        \overline{f(\Omega_\alpha)}, & \text{ if }\trans{\alpha}{\beta}\text{ is anti-holomorphic}
        \end{array}\right.
\]
Also, viewing $f$ as a multilinear function $f:\mathfrak{gl}_n(\mathbb{C})\times\dots\times\mathfrak{gl}_n(\mathbb{C})(p \text{ times})\rightarrow \mathbb{C}$, we have 
$$
f(\Omega_\alpha,\dots,\Omega_\alpha)=\overline{\partial}f(\mathfrak{r}_\alpha,\overline{\partial}\mathfrak{r}_\alpha,\dots,\overline{\partial}\mathfrak{r}_\alpha)
$$
This implies that $f(R)\in \mathcal{A}^{(p,p)}(X)$ is $\overline{\partial}$-exact. On the other hand from the 
Chern-Weil theory we know that $f(R)$ is $d_\mathcal{C}$-closed. Using the decomposition and comparing bi-degrees 
for $d_\mathcal{C}(f(R))\in \mathcal{A}_\mathcal{C}^{2p+1}(X)$, we can say that $f(R)$ is $\partial$-closed. 
Considering the Hodge decomposition for $d$-K\"{a}hler manifold and using Lemma \ref{lemma5}, we have 
$f(R)\in Im(\partial\overline{\partial})$, hence $f(R)$ is $d_\mathcal{C}$-exact. Since $f(R)$ is $d_\mathcal{C}$-closed, 
it represents an element in the de-Rham cohomology group but $f(R)$ is $d_\mathcal{C}$-exact as well, which further 
implies that the de-Rham cohomology class $[f(R)]\in H^{2p}(X,\mathcal{C})$ vanishes. 

Applying the above procedure for $i^{\mathrm{th}}$ elementary $GL_n(\mathbb{C})$-invariant function of the matrix $X=R|_{U_\alpha}$, we get $c_i(E)=0$ for all $i\in I$. 
\end{proof}

\appendix 
\section{Sobolev spaces}\label{appendix:sectionA}
\renewcommand{\thesubsection}{\textbf{A.\arabic{subsection}}}
This section presents the analysis of differential operators between $d$-complex vector bundles on 
$d$-complex manifolds. The exposition is adapted from \cite[Chapter 4]{ROW}, with necessary modifications 
tailored to the context of $d$-complex manifolds. We refrain from giving routine details for the proofs, 
as they follow a similar line of reasoning as in \cite[Chapter 4]{ROW}.

\subsection{Sobolev norm on $d$-complex bundle}\label{appendix1}

Let $f\equiv(f_1,\dots,f_m):\mathbb{R}^n\rightarrow \mathbb{C}$ be a compactly supported function and for some $s\in \mathbb{Z}$, define Sobolev norm,
$$
||f||^2_{s,\mathbb{R}^n}=\displaystyle\sum_{i=1}^m\displaystyle\int_{\mathbb{R}^n} |\widehat{f_i}(y)|^2(1+|y|^2)^s dy,
$$
where $\widehat{f_i}$ is Fourier transform of $f_i$ ($i=1,\dots,m$) given by,
$$
\widehat{f_i}(y)=(2\pi)^{-n}\int e^{-i\langle x,y\rangle} f_i(x)dx
$$

Let $(X,\dholos{X})$ be a $d$-complex manifold with $d$-holomorphic atlas $\{(U_i,\phi_i)\}$ and smooth partition of unity $\{\rho_i,i\in I\}$ sub-ordinate to the cover $\{U_i:i\in I\}$. Let $f=\{f_{U_i\cap U}\}_{i\in I}\in \mathcal{A}_d^0(U)$ be a smooth $d$-complex function, define the Sobolev norm
$$
||f||_{s,\mathcal{C}}=\displaystyle\sum_{i\in I} ||\rho_i f_{U_i\cap U}\circ \phi_i^{-1}||_{s,\mathbb{R}^n}
$$

In case of multivalued smooth function, taking Sobolev norm of each component function and Euclidean norm the vector will give the Sobolev norm of the multivalued smooth function. 

Using above procedure and smooth partition of unity subordinate to the covering $\{U_i\}_{i\in I}$, we have the Sobolev norm $||u||_{s,E}$ of a global section $u\in E(X)$ of a smooth $d$-complex bundle.

For an $n$-tupe $\alpha=(\alpha_1,\alpha_2,\dots,\alpha_n)\in \mathbb{Z}^n$ and a smooth function $f:\mathbb{R}^n\rightarrow \mathbb{C}^n$, let $|\alpha|=(\alpha_1+\alpha_2+\dots+\alpha_n)$, $y^\alpha=y_1^{\alpha_1}y_2^{\alpha_2}\dots y_n^{\alpha_n}$ and $D^\alpha\equiv (-i)^{|\alpha|} D^{\alpha_1}_1\circ D^{\alpha_2}_2\circ\dots\circ D^{\alpha_n}_n$, where $D_i\equiv \frac{\partial}{\partial x_i}$, we have a property of fourier transform as follows.
\begin{lemma}\cite[Lemma 1.7.1]{LH}\label{diff1}
Let $f:\mathbb{R}^n\rightarrow \mathbb{C}$ be a compactly supported smooth function and $\alpha=(\alpha_1,\dots,\alpha_n)\in \mathbb{Z}^n$. Then
$$
\widehat{D^\alpha f}(y)=y^\alpha\widehat{f}(y)
$$
\end{lemma}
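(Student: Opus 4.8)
The plan is to reduce the general multi-index statement to the one-dimensional case of a single partial derivative and then carry out a single integration by parts. First I would observe that both the operator $D^\alpha$ and the Fourier transform behave well under composition: since $D^\alpha = (-i)^{|\alpha|} D_1^{\alpha_1}\circ\cdots\circ D_n^{\alpha_n}$ factors as a composite of the one-variable operators $D_j = \partial/\partial x_j$, it suffices to understand how the Fourier transform acts on a single application of $D_j$ and then to iterate, peeling off the normalizing constant $(-i)^{|\alpha|}$ at the very end.

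The core computation is the identity $\widehat{D_j g}(y) = i y_j\, \widehat{g}(y)$ for any compactly supported smooth $g$. To establish it, I would write $\widehat{D_j g}(y) = (2\pi)^{-n}\int e^{-i\langle x,y\rangle}\frac{\partial g}{\partial x_j}(x)\,dx$ and integrate by parts in the $x_j$ variable. The boundary terms vanish precisely because $g$ has compact support, and differentiating the exponential produces the factor $-i y_j$; transferring the sign to the front turns this into $i y_j$. This is the one step where the compact-support hypothesis is genuinely used.

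Iterating this identity $\alpha_j$ times in each coordinate then yields $\widehat{D_1^{\alpha_1}\cdots D_n^{\alpha_n} f}(y) = (iy_1)^{\alpha_1}\cdots(iy_n)^{\alpha_n}\,\widehat{f}(y) = i^{|\alpha|}\,y^\alpha\,\widehat{f}(y)$, in the notation $y^\alpha = y_1^{\alpha_1}\cdots y_n^{\alpha_n}$ and $|\alpha| = \alpha_1+\cdots+\alpha_n$. Multiplying by the constant $(-i)^{|\alpha|}$ from the definition of $D^\alpha$ and using $(-i)^{|\alpha|} i^{|\alpha|} = ((-i)i)^{|\alpha|} = 1$ collapses the power of $i$ completely, leaving exactly $\widehat{D^\alpha f}(y) = y^\alpha\,\widehat{f}(y)$.

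There is no substantive obstacle here: the statement is the classical fact that, under the Fourier transform, differentiation becomes polynomial multiplication, and the normalization $(-i)^{|\alpha|}$ built into $D^\alpha$ is chosen precisely so that the imaginary units cancel and no spurious constant survives. The only points requiring mild care are the bookkeeping of the factors of $i$ and the justification of the interchange of differentiation and integration together with the vanishing of the boundary contributions, both of which are immediate from the smoothness and compact support of $f$.
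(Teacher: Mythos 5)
Your proof is correct, and it is precisely the argument behind the paper's treatment: the paper gives no proof of this lemma at all, simply citing H\"ormander's Lemma 1.7.1, and your integration-by-parts computation is the classical proof that citation points to. The bookkeeping checks out under the paper's conventions ($\widehat{f}(y)=(2\pi)^{-n}\int e^{-i\langle x,y\rangle}f(x)\,dx$ and $D^\alpha=(-i)^{|\alpha|}\,\partial_1^{\alpha_1}\circ\cdots\circ\partial_n^{\alpha_n}$): each integration by parts in $x_j$ produces a factor $iy_j$ with boundary terms vanishing by compact support, and $(-i)^{|\alpha|}\,i^{|\alpha|}=1$ kills the constant --- the only cosmetic remarks being that $\alpha$ should be read in $\mathbb{N}^n$ rather than $\mathbb{Z}^n$ as the statement writes, and that for this direction of the identity no interchange of differentiation and integration is actually needed, only the vanishing of boundary contributions.
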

The above lemma implies that if the Sobolev norm $||f||_{s,\mathbb{R}^n}$ is bounded, then $f$ can be assumed differentiable about $s$ times.

Define $W^s_\mathcal{C}(E)(X)$ to be completion of $\mathcal{A}_\mathcal{C}^0(E)(X)$ with respect to the norm $||.||_{s,E}$. By Lemma \ref{diff1}, the space $W^s_\mathcal{C}(E)(X)$ can be thought of as a Banach space approximating space of $s$-times differentiable global sections of the bundle $E$. The space $W^s_\mathcal{C}(E)(X)$ inherits the inner product structure of $\mathcal{A}_\mathcal{C}^0(E)(X)$ (as described above), and thus we have a sequence of Hilbert spaces, 
$$
\dots\supset W^s_\mathcal{C}(E)(X)\supset W^{s+1}_\mathcal{C}(E)(X)\supset \dots \supset W^{s+j}_\mathcal{C}(E)(X)\supset\dots
$$
\begin{proposition}[Sobolev]\cite[Chapter 4, Section 1, Proposition 1.1]{ROW}\label{Sobolev}
Let $\text{dim}_{\mathbb{R}}X=2n$ and suppose that $s=\left[n \right]+k+1$. Then
$$
W^s_\mathcal{C}(E)(X)\subset \mathcal{A}^0_{\mathcal{C},k}(E)(X),
$$
where $\mathcal{A}^0_{\mathcal{C},k}(E)(X)$ is the space of $k$-times differentiable global sections of the bundle $E$.
\end{proposition}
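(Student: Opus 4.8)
The plan is to reduce the statement to the classical Sobolev embedding lemma on Euclidean space and then transfer the conclusion back to $X$ through the $d$-holomorphic charts. First I would unwind the definition of the Sobolev norm from Subsection \ref{appendix1}: for a section $u \in W^s_\mathcal{C}(E)(X)$, finiteness of $\|u\|_{s,E}$ means that for each $i \in I$ the compactly supported function $\rho_i\,(u_{U_i}\circ\phi_i^{-1})$ lies in the Euclidean Sobolev space of order $s$ over $\mathbb{R}^{2n}$ (recall $\dim_\mathbb{R}X=2n$, so the charts take values in $\mathbb{R}^{2n}$ and $[n]=n$). Since the Sobolev norm of a vector-valued function is assembled componentwise together with the Euclidean norm on the fibre, it suffices to treat a single scalar compactly supported function $g:\mathbb{R}^{2n}\rightarrow\mathbb{C}$ with $\|g\|_{s,\mathbb{R}^{2n}}<\infty$ and to show that $g\in C^k$ whenever $s=[n]+k+1$.

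Second, I would prove this scalar Euclidean lemma by Fourier analysis. By Lemma \ref{diff1}, for any multi-index $\alpha$ with $|\alpha|\le k$ one has $\widehat{D^\alpha g}(y)=y^\alpha\widehat{g}(y)$, so by Fourier inversion $D^\alpha g$ is represented by the integral of $y^\alpha\widehat{g}(y)e^{i\langle x,y\rangle}$. The key point is that this integral converges absolutely and uniformly in $x$, which follows from Cauchy--Schwarz:
$$
\int_{\mathbb{R}^{2n}}|y^\alpha\widehat{g}(y)|\,dy\le\left(\int_{\mathbb{R}^{2n}}\frac{|y|^{2|\alpha|}}{(1+|y|^2)^{s}}\,dy\right)^{1/2}\left(\int_{\mathbb{R}^{2n}}(1+|y|^2)^{s}|\widehat{g}(y)|^2\,dy\right)^{1/2}.
$$
The second factor is exactly $\|g\|_{s,\mathbb{R}^{2n}}<\infty$, while the first factor is finite precisely when $2s-2|\alpha|>2n$, i.e. $s>n+|\alpha|$, as one sees by passing to polar coordinates on $\mathbb{R}^{2n}$. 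Since $|\alpha|\le k$ and $s=[n]+k+1=n+k+1$, this inequality holds for every such $\alpha$; hence each $D^\alpha g$ with $|\alpha|\le k$ is continuous and bounded, so $g\in C^k$.

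Finally, I would transfer the conclusion back to $X$. Each local piece $\rho_i\,(u_{U_i}\circ\phi_i^{-1})$ is $C^k$ by the Euclidean lemma, and since the transition maps $\trans{j}{i}$ are holomorphic or anti-holomorphic, hence smooth diffeomorphisms, the change of charts preserves the $C^k$ class; summing over the locally finite partition of unity then shows $u\in\mathcal{A}^0_{\mathcal{C},k}(E)(X)$. The only place where the $d$-complex structure enters is the gluing over anti-holomorphic overlaps, which introduces complex conjugation; but conjugation is $\mathbb{R}$-linear and smooth and so does not affect the differentiability class. The main obstacle is therefore contained entirely in the Euclidean estimate of the second paragraph, namely the convergence of $\int_{\mathbb{R}^{2n}}|y|^{2|\alpha|}(1+|y|^2)^{-s}\,dy$, which is what forces the hypothesis $s>n+k$ and explains the sharp value $s=[n]+k+1$; the globalization and the bundle-valued reduction are routine once the smoothness of the $d$-holomorphic charts and the $\mathbb{R}$-linearity of conjugation are invoked.
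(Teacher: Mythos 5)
Your proposal is correct and is essentially the proof the paper relies on: the paper simply cites \cite[Chapter 4, Proposition 1.1]{ROW}, and your argument reproduces that classical proof (Fourier inversion via Lemma \ref{diff1}, Cauchy--Schwarz against the weight $(1+|y|^2)^s$, convergence of $\int_{\mathbb{R}^{2n}}|y|^{2|\alpha|}(1+|y|^2)^{-s}\,dy$ exactly when $s>n+|\alpha|$, then globalization by the locally finite partition of unity and the smoothness of the holomorphic/anti-holomorphic transitions, conjugation being a smooth $\mathbb{R}$-linear map). The only point worth making explicit is that since $W^s_\mathcal{C}(E)(X)$ is defined as a completion, one should phrase the Euclidean step as the uniform estimate $\sup_x|D^\alpha g(x)|\le C\,\|g\|_{s,\mathbb{R}^{2n}}$ for smooth compactly supported $g$ and $|\alpha|\le k$, so that Cauchy sequences in the $W^s$-norm are Cauchy in the $C^k$-uniform norm and their limits are genuinely $C^k$ sections --- your displayed inequality is precisely this estimate, so the fix is purely expository.
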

\begin{proposition}[Rellich]\cite[Chaper 4, Section 1, Proposition 1.2]{ROW}\label{Relich}
The canonical operator $j:W^t_\mathcal{C}(E)(X)\hookrightarrow W^s_\mathcal{C}(E)(X)$ , where $s<t$ is a compact operator.
\end{proposition}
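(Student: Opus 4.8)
The plan is to reduce the statement to the classical Rellich compactness theorem on Euclidean space and then globalize by a partition of unity, exactly as in \cite[Chapter 4, \S 1]{ROW}, with the only new point being that the conjugations appearing in the gluing of $d$-complex sections preserve the Sobolev norm. First I would fix, using compactness of $X$, a finite subcover $\{U_{i_1},\dots,U_{i_N}\}$ of the $d$-holomorphic atlas together with a subordinate smooth partition of unity $\{\rho_i\}$. By the very definition of $\|\cdot\|_{s,E}$, a sequence $\{u_k\}$ bounded in $W^t_\mathcal{C}(E)(X)$ produces, for each chart, a sequence of compactly supported functions $\rho_i\,u_k\circ\phi_i^{-1}$ on $\phi_i(U_i)\subset\R^{2n}$ that is bounded in the Euclidean $W^t$-norm. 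Since the Fourier transform of a conjugate satisfies $\widehat{\overline{g}}(y)=\overline{\widehat{g}(-y)}$ and the weight $(1+|y|^2)^s$ is even, complex conjugation is an isometry for each Euclidean Sobolev norm; hence the conjugations in the $d$-complex compatibility conditions of sections of $E$ do not affect boundedness, and it suffices to prove compactness chart by chart.

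For the local statement I would argue as follows. Let $\{f_k\}$ be a sequence of smooth functions supported in a fixed compact set $K\subset\R^{2n}$ with $\|f_k\|_{t,\R^{2n}}\le M$. Splitting the defining integral of the $W^s$-norm at a radius $R$, the high-frequency tail is controlled by
$$
\int_{|y|\ge R}|\widehat{f_k}(y)|^2(1+|y|^2)^s\,dy\;\le\;(1+R^2)^{s-t}\int_{\R^{2n}}|\widehat{f_k}(y)|^2(1+|y|^2)^t\,dy\;\le\;(1+R^2)^{s-t}M^2,
$$
which tends to $0$ as $R\to\infty$ because $s<t$. On the complementary region $|y|<R$ the compact support of $f_k$ forces each $\widehat{f_k}$ to be smooth with $\partial_{y_j}\widehat{f_k}(y)=(2\pi)^{-n}\int_K(-i\,x_j)e^{-i\langle x,y\rangle}f_k(x)\,dx$, so that $\{\widehat{f_k}\}$ together with its first derivatives is uniformly bounded on $\{|y|<R\}$; by Arzel\`a--Ascoli and a diagonal argument over an exhaustion of $\R^{2n}$ one extracts a subsequence converging uniformly on compact subsets of frequency space. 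Combining the uniform convergence on $|y|<R$ with the uniform smallness of the tail then yields a subsequence that is Cauchy in $\|\cdot\|_{s,\R^{2n}}$, which is the desired local compactness.

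Finally I would patch the charts: applying the local result successively to each of the finitely many pieces $\rho_i\,u_k\circ\phi_i^{-1}$ and passing to a common subsequence (possible since $N$ is finite) produces a subsequence of $\{u_k\}$ whose images converge in every local $W^s$-norm, hence in $\|\cdot\|_{s,E}$, so the inclusion $j$ is compact. I expect the main obstacle to be the local frequency-space argument: establishing the equicontinuity needed for Arzel\`a--Ascoli, that is, the uniform control of $\nabla_y\widehat{f_k}$ coming from the fixed compact support (via Lemma \ref{diff1}), and organizing the split estimate so that the tail and the core are controlled simultaneously. The globalization over the finite cover and the conjugation-invariance of the norms are routine once this local core is in place.
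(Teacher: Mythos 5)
Your proposal follows exactly the route the paper relies on: the paper gives no proof of Proposition \ref{Relich} at all, deferring entirely to \cite[Chapter 4, Proposition 1.2]{ROW}, and what you have written is precisely Wells' classical two-step argument (tail estimate with the factor $(1+R^2)^{s-t}$, equiboundedness and equicontinuity of the Fourier transforms on compact frequency sets followed by Arzel\`a--Ascoli and a diagonal extraction, then a finite partition of unity to globalize over the compact manifold). The one genuinely $d$-complex point --- that the conjugations in the gluing of sections of $E$ over anti-holomorphic overlaps are harmless --- you handle correctly via $\widehat{\overline{g}}(y)=\overline{\widehat{g}(-y)}$ and the evenness of the weight $(1+|y|^2)^s$, so the reduction to the Euclidean statement is sound.

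One technical caveat worth fixing: the paper's Sobolev indices range over all of $\mathbb{Z}$, and your equiboundedness step is only justified as written when $t\ge 0$. You control $\widehat{f_k}$ and $\partial_{y_j}\widehat{f_k}$ by $\int_K|f_k|\,dx$ and $\int_K|x_j||f_k|\,dx$, and the bound $\|f_k\|_{L^1(K)}\le |K|^{1/2}\|f_k\|_{0}\le |K|^{1/2}\|f_k\|_{t}$ uses $t\ge 0$; for $t<0$ a sequence supported in a fixed compact set and bounded in $W^t$ can have unbounded $L^1$ norm (think of smoothed profiles like $k\cos(kx)\chi(x)$ in one variable, bounded in $W^{-1}$ but not in $L^1$), so the naive differentiation-under-the-integral bound does not close. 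The repair is Wells' own device: writing $\widehat{f_k}(y)=(2\pi)^{-n}\bigl\langle f_k,\varphi\, e^{-i\langle\cdot,y\rangle}\bigr\rangle$ with $\varphi$ a cutoff equal to $1$ near $K$, one estimates the pairing by $\|f_k\|_{t}\,\bigl\|\varphi\, e^{-i\langle\cdot,y\rangle}\bigr\|_{-t}$, which depends continuously on $y$ and is therefore uniformly bounded on $\{|y|\le R\}$; the same duality bound applies to the $y$-derivatives and restores the hypotheses of Arzel\`a--Ascoli. With that substitution your local core, and hence the whole proof, is correct for all integers $s<t$, and it coincides with the argument the paper cites.
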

\begin{definition}\label{diffOP}\rm{
Define a class of operators $\mathrm{OP_m(E, F)}\subset \mathrm{Hom}_\mathcal{C}(E,F)$ $(k\in \mathbb{N})$ such that if $T=\{T_{U_i}\}_{i\in I}\in \mathrm{OP_m(E, F)}$ with respect to some atlas $\mathcal{U}=\{(U_i,\phi_i)\}_{i\in I}\in \dholos{X}$ then for all $s\in \mathbb{Z}$ there is a continuous extension of each $T_{U_i}(i\in I)$,
$$
T_{U_i,s}:W^s_\mathcal{C}(E)(U_i)\rightarrow W^{s-m}_\mathcal{C}(E)(U_i)
$$
We call the elements of $\mathrm{OP_m(E, F)}$, operators of order $m$ from $E$ to $F$.
}
\end{definition}

Following the arguments as in \cite[Chapter 4, Section 2, Proposition 2.2]{ROW}, we have $\mathrm{Diff_m(E, F)}\subset \mathrm{OP_m(E, F)}$.

\subsection{Pseudo-differential operator on Euclidean space}\label{appendix2}

In this section, we recall some basic concepts pertaining to pseudo-differential operators on the space of smooth functions on open subsets of $\mathbb{R}^n$, with compact support. For details see \cite[Chapter 4, Section 3]{ROW}.

Let $U\subset \mathbb{R}^n$ (an open subset) and $p(x,y)$ be a polynomial of degree $m$ in $y\in \mathbb{R}^n$ with coefficients as smooth functions of $x\in U$. We can associate to the polynomial $p(x,y)$, a differential operator $P=p(x,D)$ by replacing monomials $y^\alpha=y_1^{\alpha_1}\dots y_n^{\alpha_n}$, $\alpha=(\alpha_1,\dots,\alpha_n)\in \mathbb{N}^n$ with the operator $D^\alpha=(-i)^{|\alpha|}D_1^{\alpha_1}\circ\dots\circ D_n^{\alpha_n}$, where $D_i\equiv \frac{\partial}{\partial x_i}$ and $|\alpha|=\alpha_1+\alpha_2+\dots+\alpha_n$. The polynomial $p(x,y)$ operates on a given function $f$ with domain $U$ as,
$$
p(x,y).f(x)=p(x,D)(f)(x)=\int_{\mathbb{R}^n} p(x,y)\widehat{f}(y)e^{i\langle x,y \rangle} dy\quad(\text{using Lemma \ref{diff1}}).
$$

Following \cite[Chapter 4, Section 3]{ROW}, we can generalize the notion of $m$-th order differential operator by replacing it with the polynomial having some suitable asymptotic properties as described below.

\begin{definition}\rm{
Let $U\subset \mathbb{R}^n$ be an open subset and let $m$ be an integer.
\begin{enumerate}
\item Define $\tilde{S}^m(U)$ to be the class of functions $p(x,y)$ defined on $U\times \mathbb{R}^n$ with the property that for any compact set $U_k\subset U$, for any multi indices $\alpha,\beta$, there exist a constant $C_{\alpha\beta k}$ depending only on $\alpha,\beta,k$ and polynomial $p$ such that,
$$
|D_x^\alpha D_y^\beta p(x,y)|\leq C_{\alpha\beta k}(1+|y|)^{m-|\alpha|}
$$
for all $x\in U_k$ and $y\in \mathbb{R}^n$.
\item Let $S^m(U)\subset \tilde{S}^m(U)$ be collection of those polynomials $p(x,y)$ for which the limit $\sigma_m(p)(x,y)=\lim_{\lambda\rightarrow\infty}\frac{p(x,\lambda y)}{\lambda^m}$ exist for $y\in \mathbb{R}^n$ (non-zero) and for a non-zero smooth function $\psi\in C^\infty(\mathbb{R}^n)$ such that $\psi\equiv 0$ in a neighbourhood of $y=0$ and $\psi\equiv 1$ outside a unit ball centered at $y=0$, we have
$$
p(x,y)-\psi(y)\sigma_m(p)(x,y)\in \tilde{S}^{m-1}(U)
$$
\item Define $\tilde{S}_0^m(U)\subset \tilde{S}^m(U)$ be the class of functions $p(x,y)$ having compact support as a function of $x$ in $U_k\subset U$.
\item Also, define $S^m_0(U)\subset S^m(U)$ be the class of functions $p(x,y)\in S^m(U)$ having compact support as a function of $x$ in $U_k\subset U$ i.e. $S^m_0(U)=S^m(U)\cap \tilde{S}^m_0(U)$.
\end{enumerate}
}
\end{definition}
\begin{remark}\rm{
If polynomial $p(x,y)$ is of degree $m$ in $y\in \mathbb{R}^n$ then $p(x,y)\in S^m(U)$. Furthermore, if coefficients of $y^\alpha$ ($|\alpha|=m$) in $p(x,y)$, which are smooth functions of $x\in U$, have compact support then $p\in S^m_0(U)$.
}
\end{remark}
\begin{lemma}\cite[Chapter 4, Section 3, Lemma 3.2]{ROW}
For $p\in S^m(U)$, $\sigma_m(p)$ is a smooth function on $U\times (\mathbb{R}^n\setminus\{0\})$, homogenous of degree $m$ in $y$.
\end{lemma}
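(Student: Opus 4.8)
The plan is to establish the two assertions of the lemma separately, relying only on the defining properties of the symbol class $S^m(U)$ recorded above. Homogeneity is immediate from the limit formula $\sigma_m(p)(x,y)=\lim_{\lambda\to\infty}\lambda^{-m}p(x,\lambda y)$: given $\rho>0$ and $y\neq 0$, the substitution $\mu=\rho\lambda$ yields
\[
\sigma_m(p)(x,\rho y)=\lim_{\lambda\to\infty}\frac{p(x,\lambda\rho y)}{\lambda^m}=\rho^m\lim_{\mu\to\infty}\frac{p(x,\mu y)}{\mu^m}=\rho^m\,\sigma_m(p)(x,y),
\]
so $\sigma_m(p)$ is (positively) homogeneous of degree $m$ in $y$ on $\R^n\setminus\{0\}$.

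For smoothness I would exploit the structural clause in the definition of $S^m(U)$: there is a cutoff $\psi\in C^\infty(\R^n)$, vanishing near the origin and equal to $1$ outside the unit ball, with $p(x,y)-\psi(y)\sigma_m(p)(x,y)\in\tilde{S}^{m-1}(U)$. On the open region $\{|y|>1\}$ one has $\psi\equiv 1$, so there $\sigma_m(p)(x,y)=p(x,y)-\big(p(x,y)-\psi(y)\sigma_m(p)(x,y)\big)$, a difference of an element of $\tilde{S}^m(U)$ and an element of $\tilde{S}^{m-1}(U)$. Since every function in a class $\tilde{S}^k(U)$ is smooth on $U\times\R^n$ — all its derivatives $D_x^\alpha D_y^\beta p$ exist by hypothesis — it follows that $\sigma_m(p)$ is smooth on $U\times\{|y|>1\}$. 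To reach an arbitrary point $(x_0,y_0)$ with $y_0\neq 0$, I would invoke the homogeneity just proved: choose $\lambda_0>0$ so large that $|\lambda_0 y|>1$ for all $y$ in a neighbourhood of $y_0$, and write $\sigma_m(p)(x,y)=\lambda_0^{-m}\,\sigma_m(p)(x,\lambda_0 y)$; the right-hand side is the composition of the smooth map $(x,y)\mapsto(x,\lambda_0 y)$ with a function already shown smooth on $\{|y|>1\}$, so $\sigma_m(p)$ is smooth near $(x_0,y_0)$, and hence on all of $U\times(\R^n\setminus\{0\})$.

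The genuinely delicate point is the smoothness claim, namely upgrading the pointwise limit defining $\sigma_m(p)$ from mere existence to $C^\infty$-regularity. The argument above sidesteps differentiating the limit directly by transferring the regularity of $p$ and of the $\tilde{S}^{m-1}(U)$-remainder through the symbol decomposition. Were a self-contained proof preferred, the main obstacle would be to differentiate under the limit sign: one sets $g_\lambda(x,y)=\lambda^{-m}p(x,\lambda y)$ and uses the symbol estimates — whose decay exponent is governed by the number of $y$-derivatives — to bound each $D_x^\alpha D_y^\beta g_\lambda$ uniformly in $\lambda$ on compact subsets of $U\times(\R^n\setminus\{0\})$, thereby concluding that the limit is smooth with derivatives equal to the limits of the derivatives.
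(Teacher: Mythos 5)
Your proposal is correct and follows essentially the same route as the proof the paper relies on: the paper states this lemma without proof, citing \cite[Chapter 4, Section 3, Lemma 3.2]{ROW}, and the argument there is exactly yours --- homogeneity via the substitution $\mu=\rho\lambda$ in the defining limit, then smoothness on $\{|y|\geq 1\}$ by writing $\sigma_m(p)=p-\bigl(p-\psi\,\sigma_m(p)\bigr)$ with the second term in $\tilde{S}^{m-1}(U)$ and hence smooth. The only (immaterial) difference is in the final propagation step: Wells extends to all of $U\times(\mathbb{R}^n\setminus\{0\})$ by writing $\sigma_m(p)(x,y)=|y|^m\,\sigma_m(p)(x,y/|y|)$, whereas you rescale locally by a fixed $\lambda_0$; both exploit the homogeneity just established, and your proof has no gaps.
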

\begin{definition}\label{pseuddef}\rm{
For any $p\in \tilde{S}^m(U)$ and $f\in C_c^\infty(U)\subset C^\infty(U)$ having compact support, define 
$$
L(p)(f)(x)=\displaystyle\int_{\mathbb{R}^n} p(x,y)\widehat{f}(y)e^{i\langle x,y \rangle}dy
$$
where, $\widehat{f}(y)$ is Fourier transform of $f$ and $\langle x,y \rangle$ is euclidean inner product of vectors $x,y\in \mathbb{R}^n$. The operator $L(p)$ is called a pseudo-differential operator canonical to the polynomial $p\in \tilde{S}^m(U)$ of order $m$.
}
\end{definition}
From \cite[Chapter 4, Section 3, Lemma 3.3]{ROW} and \cite[Chapter 4, Section 3, Theorem 3.4]{ROW}, it follows that for $p\in \tilde{S}^m(U)$, $L(p)$ maps $C^\infty_c(U)$ into $C^\infty(U)$ and for $p\in \tilde{S}^m_0(U)$, $L(p)$ is an operator of order $m$.

\subsection{Parametrix for elliptic differential operator on $d$-complex bundle}\label{appendix3}
Pseudo-differential operators for $d$-complex vector bundles on a $d$-complex manifold has been described in Subsection \ref{subsection3.1} and reason for defining it, is to obtain a class of operators, which include the inverses.
 
In this section, we will discuss about parametrix $\widetilde{L}$ of an operator $L\in \mathrm{PDiff_m(E, F)}$ between $d$-complex vector bundles $E$ and $F$ on a $d$-complex manifold $(X,\dholos{X})$ (for details see Subsection \ref{subsection3.1}), assuming symbol of $L$ has some property as discussed below, fundamental decomposition theorem for elliptic self-adjoint differential operators, which is useful to get the orthogonal decomposition for a given elliptic complex, which further helps in getting Hodge decomposition for de-Rham chain complex \eqref{derhamcomp} as well as Dolbeault chain complex \eqref{dolbeaultcomp}.

\begin{definition}\rm{
For $d$-holomorphic bundles $E$, $F$ on a $d$-complex manifold $(X,\dholos{X})$ with $d$-holomorphic atlas $\mathcal{U}=\{(U_i,\phi_i\}_{i\in I}$, an element $s=\{s_{U_i}\}_{i\in I} \in \mathrm{Smbl}_m(E,F)$ (see Subsection \ref{subsection3.1}) is called elliptic, if $s_{U_i}\in \mathrm{Smbl}_m(E|_{U_i},F|_{U_i})$ is elliptic for each $i\in I$ and an operator $L\in \mathrm{PDiff_m(E, F)}$ is called an elliptic if its symbol $\sigma_m(L)$ is elliptic. For more details, see \cite[Chapter 4, Section 4]{ROW}.
}
\end{definition}
\begin{definition}\rm{
An element $\widetilde{L}\in \mathrm{PDiff}_{-m}(E,F)$ is called parametrix for an operator $L\in \mathrm{PDiff}_{m}(E,F)$ if,
\begin{align*}
L\circ \widetilde{L}-I_F&\in \mathrm{OP}_{-1}(F)\\
\widetilde{L}\circ L-I_E&\in \mathrm{OP}_{-1}(E)
\end{align*}
}
\end{definition}
Using Theorem \ref{symbl}, properties of symbol operators (see Theorem \ref{symblprop}) and following in the same of arguments as in \cite[Chapter 4, Section 4, Theorem 4.4]{ROW}, there exists a parametrix for a given elliptic operator $L\in \mathrm{PDiff_m(E, F)}$. 

Note that, an operator $L\in \mathrm{OP_m(E, F)}$ is compact if for each $s\in \mathbb{Z}$, the extension $L_s:W^s_\mathcal{C}(E)(X)\rightarrow W^{s-m}_\mathcal{C}(F)(X)$ is compact operator as a mapping between Banach spaces.
\begin{proposition}\label{prop2}
For $d$-complex vector bundle $E$ on a $d$-complex manifold $(X,\dholos{X})$, let $S\in \mathrm{OP}_{-1}(E,E)$. Then, $S$ is a compact operator of order $0$.
\end{proposition}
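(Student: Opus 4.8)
The plan is to realize the order-zero extension of $S$ as a composition that ends in a compact Sobolev embedding, so that compactness is handed to us by Rellich's lemma (Proposition \ref{Relich}). First I would unwind the hypothesis using Definition \ref{diffOP}: since $S\in \mathrm{OP}_{-1}(E,E)$, for each $s\in \mathbb{Z}$ there is a continuous extension
$$
S_s:W^s_\mathcal{C}(E)(X)\longrightarrow W^{s-(-1)}_\mathcal{C}(E)(X)=W^{s+1}_\mathcal{C}(E)(X).
$$
This is the only input we need about $S$; the gain of one degree of Sobolev regularity is exactly what the order $-1$ buys us, and it is the feature that the whole argument exploits.

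Next I would produce the order-zero extension by post-composing with the canonical inclusion. By the nested structure of the Sobolev spaces $W^{s+1}_\mathcal{C}(E)(X)\subset W^s_\mathcal{C}(E)(X)$ established in Appendix \ref{appendix1}, write $j_s:W^{s+1}_\mathcal{C}(E)(X)\hookrightarrow W^s_\mathcal{C}(E)(X)$ for this inclusion. Then
$$
j_s\circ S_s:W^s_\mathcal{C}(E)(X)\longrightarrow W^s_\mathcal{C}(E)(X)
$$
is a continuous extension of $S$ for every $s$, which exhibits $S$ as an element of $\mathrm{OP}_0(E,E)$, i.e.\ an operator of order $0$.

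Finally I would invoke Rellich's lemma: since $s<s+1$, Proposition \ref{Relich} asserts that $j_s$ is a compact operator between Banach spaces. Because the composition of a bounded operator with a compact operator is again compact, the order-zero extension $j_s\circ S_s$ is compact for each $s\in \mathbb{Z}$. By definition this says precisely that $S$ is a compact operator of order $0$, completing the proof. I do not expect any genuine obstacle here: the content is entirely contained in Rellich's lemma, and the only point that requires a line of care is the bookkeeping that the factorization $S=j_s\circ S_s$ is valid \emph{at each fixed} $s$ (compactness is demanded level by level, not uniformly in $s$), so that the per-$s$ argument suffices. This mirrors the corresponding step in \cite[Chapter 4]{ROW}, now carried out with the $d$-complex Sobolev completions $W^s_\mathcal{C}(E)(X)$.
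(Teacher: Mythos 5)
Your proposal is correct and is essentially the paper's own argument: the paper's proof simply defers to \cite[Chapter 4, Section 4, Proposition 4.5]{ROW} together with Proposition \ref{Relich}, and the argument there is exactly your factorization $j_s\circ S_s$ of the order-zero extension through the compact Sobolev inclusion, with compactness inherited level by level from Rellich's lemma. Nothing further is needed.
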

\begin{proof}
The proof follows in same line of arguements as in \cite[Chapter 4, Section 4, Proposition 4.5]{ROW} using Proposition \ref{Relich} (ch.\cite[Chapter 4, Section 1, Proposition 1.4]{ROW}).
\end{proof}

Let $E$ and $F$ are fixed $d$-Hermitian vector bundles on $d$-complex manifold $(X,\dholos{X})$ with line bundle $\mathcal{L}$ as described in Subsection \ref{subsec2.1}. The $d$-Hermitian inner product on $E$ and $F$, induces an inner product on the spaces $\mathcal{A}^0_\mathcal{C}(E)(X)$ and $\mathcal{A}^0_\mathcal{C}(F)(X)$, as described in Subsection \ref{subsecad} with their completions $W^0_\mathcal{C}(E)(X)$ and $W^0_\mathcal{C}(F)(X)$, respectively, under the $L^2$ norm.

For $L\in \mathrm{Diff_m(E, F)}$, define the space
$$
\mathcal{H}_L(X)=\{\alpha\in \mathcal{A}^0_\mathcal{C}(E)(X):L(\alpha)=0\}
$$
with orthogonal complement $\mathcal{H}_L^\perp(X)$ in the Hilbert space $W^0_\mathcal{C}(E)(X)$, which is a closed subspace of $W^0_\mathcal{C}(E)(X)$. Moreover, if $L$ is an elliptic operator, then $\mathcal{H}_L(X)$ is finite-dimensional; hence closed in $W^0_\mathcal{C}(E)(X)$ as well (see Theorem \ref{findim}).
\begin{proposition}\cite[Chapter 4, Section 4, Proposition 4.6]{ROW}\label{prop3}
Let $B$ be a Banach space and let $S: B\rightarrow B$ be a compact operator. Let $T=I-S$. Then
\begin{enumerate}
\item ker($T$) is finite dimensional
\item $T(B)$ is closed in $B$ and Coker($T$) is finite dimensional.
\end{enumerate}
\end{proposition}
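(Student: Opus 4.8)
The plan is to follow the classical Riesz--Schauder theory for a compact perturbation of the identity, treating the three assertions in the order $\ker(T)$, closedness of $T(B)$, and finally $\mathrm{Coker}(T)$.

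First I would dispatch part (1). On $\ker(T)$ the equation $Tx=0$ reads $Sx=x$, so the restriction $S|_{\ker(T)}$ coincides with the identity map of $\ker(T)$. Since $S$ is compact and $\ker(T)$ is closed in $B$ (being the kernel of the bounded operator $T$), the identity map of $\ker(T)$ is compact; equivalently, the closed unit ball of $\ker(T)$ is compact. By the lemma of F.\ Riesz, a normed space whose closed unit ball is compact is finite dimensional, so $\ker(T)$ is finite dimensional.

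Next, for the closedness of $T(B)$, I would use that a finite-dimensional subspace is always complemented, so $B=\ker(T)\oplus M$ with $M$ a closed subspace, and $T|_M$ is injective with $T(M)=T(B)$. The key claim is that $T|_M$ is bounded below. If it were not, there would be a sequence $x_n\in M$ with $\|x_n\|=1$ and $Tx_n\to 0$; compactness of $S$ yields a subsequence along which $Sx_{n}\to y$, and then $x_{n}=Tx_{n}+Sx_{n}\to y$, whence $y\in M$, $\|y\|=1$, and $Ty=\lim Tx_{n}=0$. Thus $y\in\ker(T)\cap M=\{0\}$, contradicting $\|y\|=1$. Hence there is $c>0$ with $\|Tx\|\ge c\|x\|$ for all $x\in M$, from which closedness follows: if $Tx_n\to z$ with $x_n\in M$, the lower bound shows $(x_n)$ is Cauchy, so $x_n\to x\in M$ and $z=Tx\in T(B)$. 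For the finite dimensionality of $\mathrm{Coker}(T)=B/T(B)$, I would pass to the adjoint: by Schauder's theorem the adjoint $S^\star$ is compact, so $T^\star=I-S^\star$ is again a compact perturbation of the identity on $B^\star$, and part (1) applied to $T^\star$ gives $\dim\ker(T^\star)<\infty$. Since $T(B)$ is now known to be closed, the annihilator duality identifies $(B/T(B))^\star$ with $T(B)^\perp=\ker(T^\star)$, so $\dim\mathrm{Coker}(T)=\dim\ker(T^\star)<\infty$.

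The hard part is the closedness of $T(B)$: the ``bounded below on a closed complement'' step is where compactness of $S$ is genuinely exploited, and it rests on first securing the closed complement $M$ to the finite-dimensional $\ker(T)$ and then extracting a convergent subsequence of $(Sx_n)$. The cokernel statement is comparatively formal but depends on $T(B)$ being already closed, together with Schauder's theorem and the annihilator duality, so it must come last.
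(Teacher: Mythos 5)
Your proof is correct and follows exactly the classical Riesz--Schauder route that the paper itself invokes: the paper gives no details beyond naming the Riesz finiteness criterion (a locally compact topological vector space is finite dimensional) and citing Rudin and Wells, and that criterion is precisely the tool behind your part (1), while your complemented-subspace and bounded-below argument for closedness of $T(B)$ and the Schauder/annihilator-duality step for the cokernel are the standard details supplied in those references. Nothing is missing; in particular you correctly order the argument so that closedness of $T(B)$ is established before identifying $(B/T(B))^\star$ with $\ker(T^\star)$.
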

\begin{proof}
The proof depends on the fundamental finiteness criterion in functional analysis, which asserts that a locally compact topological vector space is a finite dimension. For more details see, \cite{WR} and \cite[Chapter 4, Section 4 Proposition 4.6]{ROW}.
\end{proof}
\begin{definition}\rm{
A Fredholm operator, $T$ on a Banach space is an operator with a finite-dimensional kernel and finite-dimensional cokernel.
}
\end{definition}

Using existence of parametrix, Propositions \ref{prop2} and \ref{prop3}, we have
\begin{theorem}\label{theorem1}
Let $L\in \mathrm{PDiff_m(E, F)}$ be an elliptic pseudo-differential operator. There exists a parametrix $P$ for $L$ such that $L\circ P$ (respectively, $P\circ L$) has continuous extension as a Fredholm operator $(L\circ P)_s:W^s_\mathcal{C}(F)(X)\rightarrow W^s_\mathcal{C}(F)(X)$ (respectively, $(P\circ L)_s:W^s_\mathcal{C}(E)(X)\rightarrow W^s_\mathcal{C}(E)(X)$) for each integer $s$. 
\end{theorem}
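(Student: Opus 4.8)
The plan is to produce the parametrix first and then read off the Fredholm property from the Fredholm alternative (Proposition \ref{prop3}), once the remainder terms have been recognized as compact operators via Proposition \ref{prop2}. Since $L\in \mathrm{PDiff_m(E, F)}$ is elliptic, the discussion preceding Proposition \ref{prop2} (adapted from \cite[Chapter 4, Section 4, Theorem 4.4]{ROW}) furnishes a parametrix $P\in \mathrm{PDiff}_{-m}(F,E)$ with
$$
S_F := L\circ P - I_F \in \mathrm{OP}_{-1}(F,F), \qquad S_E := P\circ L - I_E \in \mathrm{OP}_{-1}(E,E).
$$
On the compact $d$-complex manifold $(X,\dholos{X})$ we have $\mathrm{PDiff}_m\subset \mathrm{OP}_m$ (Proposition \ref{adjoint2}), so $P\in \mathrm{OP}_{-m}$ and $L\in \mathrm{OP}_m$; hence $L\circ P\in \mathrm{OP}_0$ and $P\circ L\in \mathrm{OP}_0$ admit continuous extensions $(L\circ P)_s:W^s_\mathcal{C}(F)(X)\rightarrow W^s_\mathcal{C}(F)(X)$ and $(P\circ L)_s:W^s_\mathcal{C}(E)(X)\rightarrow W^s_\mathcal{C}(E)(X)$ for each $s\in\mathbb{Z}$.

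Next I would identify the remainders as compact operators. By Proposition \ref{prop2}, the elements $S_F\in \mathrm{OP}_{-1}(F,F)$ and $S_E\in \mathrm{OP}_{-1}(E,E)$ are compact operators of order $0$; in particular, their continuous extensions $(S_F)_s$ on $W^s_\mathcal{C}(F)(X)$ and $(S_E)_s$ on $W^s_\mathcal{C}(E)(X)$ are compact for every $s$. I would then rewrite the extensions of the compositions as
$$
(L\circ P)_s = I_F - \big(-(S_F)_s\big), \qquad (P\circ L)_s = I_E - \big(-(S_E)_s\big),
$$
so that each has the form $I - K$ with $K$ compact on the Hilbert (hence Banach) space $W^s_\mathcal{C}(F)(X)$, respectively $W^s_\mathcal{C}(E)(X)$.

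Finally I would apply the Fredholm alternative. Invoking Proposition \ref{prop3} with $B = W^s_\mathcal{C}(F)(X)$ and $S = -(S_F)_s$ (respectively $B = W^s_\mathcal{C}(E)(X)$ and $S = -(S_E)_s$), the operator $(L\circ P)_s$ (respectively $(P\circ L)_s$) has finite-dimensional kernel, closed range, and finite-dimensional cokernel; by the definition of a Fredholm operator, this is exactly the asserted conclusion. Since the argument is uniform in $s$, it holds for every integer $s$, completing the proof.

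I expect the only delicate point to be bookkeeping rather than any deep difficulty: one must check that the order conventions are consistent (so that $L\circ P$ and $P\circ L$ really land in $\mathrm{OP}_0$ and extend boundedly on a single fixed $W^s_\mathcal{C}$), and that the remainders $S_F,S_E\in \mathrm{OP}_{-1}$, which gain one degree of Sobolev regularity, are compact after composing with the Rellich embedding (Proposition \ref{Relich}) — precisely the content of Proposition \ref{prop2}. The substantive analytic inputs, namely the existence of the parametrix and the compactness of negative-order operators, are already in hand, so the theorem is an assembly of these facts with the Fredholm alternative.
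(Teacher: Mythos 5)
Your proposal is correct and follows essentially the same route as the paper, which derives Theorem \ref{theorem1} precisely from the existence of a parametrix together with Proposition \ref{prop2} (compactness of $\mathrm{OP}_{-1}$ operators) and Proposition \ref{prop3} (the Fredholm alternative for $I-S$ with $S$ compact). Your additional bookkeeping --- using Proposition \ref{adjoint2} to place $L\circ P$ and $P\circ L$ in $\mathrm{OP}_0$ so the extensions on $W^s_\mathcal{C}$ exist, and writing each extension as $I-K$ with $K$ compact --- fills in exactly the details the paper leaves implicit.
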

\begin{theorem}\label{theorem2}
For a given elliptic differential operator $L\in \mathrm{Diff_m(E, F)}$, let $\alpha\in W^s_\mathcal{C}(E)(X)$ has propery that $L_s(\alpha)=\beta\in \mathcal{A}^0_\mathcal{C}(F)(X)$. Then $\alpha\in \mathcal{A}^0_\mathcal{C}(E)(X)$.
\end{theorem}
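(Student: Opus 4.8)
The plan is to run the standard elliptic bootstrapping argument, trading the ellipticity of $L$ for a gain of Sobolev regularity on $\alpha$ by means of the parametrix supplied by Theorem \ref{theorem1}. Since $L$ is elliptic, Theorem \ref{theorem1} furnishes a parametrix $\widetilde{L}$ of order $-m$ for $L$, so in particular $\widetilde{L}\circ L - I_E \in \mathrm{OP}_{-1}(E)$. Setting $S := \widetilde{L}\circ L - I_E \in \mathrm{OP}_{-1}(E)$, the identity that drives the whole argument is
$$
\alpha = \widetilde{L}(L\alpha) - S\alpha = \widetilde{L}(\beta) - S\alpha,
$$
which holds once one checks that all three terms are compared in the same completion $W^s_\mathcal{C}(E)(X)$.

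First I would observe that each of the two terms on the right is more regular than $\alpha$ itself. The hypothesis $\beta \in \mathcal{A}^0_\mathcal{C}(F)(X)$ places $\beta$ in $W^t_\mathcal{C}(F)(X)$ for every $t$; because $\widetilde{L}$ has order $-m$, its extension carries each $W^t_\mathcal{C}(F)(X)$ into $W^{t+m}_\mathcal{C}(E)(X)$, so $\widetilde{L}(\beta)$ lies in every Sobolev space and is already a smooth section. For the correction term, $S$ has order $-1$, so by Definition \ref{diffOP} its extension $S_s$ maps $W^s_\mathcal{C}(E)(X)$ into $W^{s+1}_\mathcal{C}(E)(X)$; since $\alpha \in W^s_\mathcal{C}(E)(X)$ by hypothesis, we obtain $S\alpha \in W^{s+1}_\mathcal{C}(E)(X)$. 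Feeding these back into the identity gives $\alpha \in W^{s+1}_\mathcal{C}(E)(X)$.

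Next I would iterate this one-step gain. Knowing now $\alpha \in W^{s+1}_\mathcal{C}(E)(X)$, the same identity yields $S\alpha \in W^{s+2}_\mathcal{C}(E)(X)$ and hence $\alpha \in W^{s+2}_\mathcal{C}(E)(X)$; by induction $\alpha \in W^{s+j}_\mathcal{C}(E)(X)$ for every $j \geq 0$, so $\alpha$ belongs to arbitrarily high Sobolev spaces. The Sobolev embedding of Proposition \ref{Sobolev} then shows that a section lying in $W^{s'}_\mathcal{C}(E)(X)$ for arbitrarily large $s'$ is $k$-times differentiable for every $k$, that is, $\alpha \in \mathcal{A}^0_\mathcal{C}(E)(X)$, which is the assertion.

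The main obstacle is not conceptual but one of bookkeeping on the Sobolev scale: one must verify that the parametrix relation, valid a priori for smooth sections, propagates correctly to the completions, so that $\widetilde{L}$, $L$ and $S$ act compatibly and the extensions $L_s$, $S_s$ and the order $-m$ extension of $\widetilde{L}$ are precisely the ones appearing in the definition of $\mathrm{OP}$. This compatibility is exactly what Theorem \ref{theorem1} together with Definition \ref{diffOP} provides, so once the displayed identity is justified on $W^s_\mathcal{C}(E)(X)$ the remainder is the mechanical bootstrap described above.
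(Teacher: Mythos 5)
Your proposal is correct and is essentially the paper's own argument: the paper simply defers to the proof of \cite[Chapter 4, Section 4, Theorem 4.9]{ROW}, which is exactly the parametrix bootstrap you write out — the identity $\alpha = \widetilde{L}(\beta) - S\alpha$ with $S = \widetilde{L}\circ L - I_E \in \mathrm{OP}_{-1}(E)$, iterated regularity gain on the Sobolev scale, and the embedding of Proposition \ref{Sobolev} to conclude smoothness. Your attention to extending the parametrix relation from smooth sections to the completions by density and continuity is the right bookkeeping and matches the framework of Definition \ref{diffOP} and Theorem \ref{theorem1}.
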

\begin{proof}
The proof follows in the same line of argument as in \cite[Chapter 4, Section 4, theorem 4.9]{ROW}, using Proposition \ref{Sobolev} (cf. \cite[Chapter 4, Section 1, Proposition 1.1]{ROW}). 
\end{proof}
The Theorems \ref{theorem1} and \ref{theorem2} give the following theorem on finite dimension of Kernel of elliptic operators.
\begin{theorem}\cite[Chapter 4, Section 4, Theorem 4.8]{ROW}\label{findim}
Let $L\in \mathrm{Diff_m(E, F)}$ be an elliptic differential operator. Define $\mathcal{H}_{L_s}(X)=\text{Ker}\big(L_s:W^s_\mathcal{C}(E)(X)\rightarrow W^{s-m}_\mathcal{C}(F)(X)\big)$, then
\begin{enumerate}
\item $\mathcal{H}_{L_s}(X)\subset \mathcal{A}^0_\mathcal{C}(E)(X)$; hence $\mathcal{H}_{L_s}(X)=\mathcal{H}_L(X)$ for all $s\in \mathbb{Z}$.
\item $\text{dim}_\mathbb{R}(\mathcal{H}_{L_s}(X))=\text{dim}_\mathbb{R}(\mathcal{H}_L(X))<\infty$, $\text{dim}_\mathbb{R}\left(\bigslant{W^{s-m}_\mathcal{C}(F)}{L_s(W^s_\mathcal{C}(E))}\right)<\infty$.
\end{enumerate}
\end{theorem}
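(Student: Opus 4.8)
The plan is to deduce both assertions from three functional-analytic ingredients already in place: the elliptic regularity statement (Theorem~\ref{theorem2}), the existence of a parametrix whose one-sided compositions with $L$ extend to Fredholm operators (Theorem~\ref{theorem1}), and the abstract finiteness result for operators of the form $I-S$ with $S$ compact (Proposition~\ref{prop3}).

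First I would establish (1) by elliptic regularity. If $\alpha\in \mathcal{H}_{L_s}(X)=\mathrm{Ker}(L_s)$, then $L_s(\alpha)=0$, and since the zero section lies in $\mathcal{A}^0_\mathcal{C}(F)(X)$, Theorem~\ref{theorem2} forces $\alpha\in \mathcal{A}^0_\mathcal{C}(E)(X)$. Hence $\mathcal{H}_{L_s}(X)\subset \mathcal{A}^0_\mathcal{C}(E)(X)=\mathcal{H}_L(X)$ for every $s\in \mathbb{Z}$; the reverse inclusion is immediate because a smooth solution lies in every $W^s_\mathcal{C}(E)(X)$, so $\mathcal{H}_{L_s}(X)=\mathcal{H}_L(X)$ independently of $s$.

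For (2) I would invoke the parametrix $P$ of Theorem~\ref{theorem1}. By the defining property of a parametrix, $P\circ L-I_E\in \mathrm{OP}_{-1}(E)$; extending at index $s$ gives $(P\circ L)_s=I_E+S$, where $S$ is of order $-1$ and hence compact by Proposition~\ref{prop2}. Any $\alpha\in \mathrm{Ker}(L_s)$ then satisfies $(P\circ L)_s(\alpha)=0$, so $\mathrm{Ker}(L_s)\subset \mathrm{Ker}(I_E+S)$, which is finite dimensional by Proposition~\ref{prop3}; together with (1) this yields $\text{dim}_\mathbb{R}\mathcal{H}_{L_s}(X)=\text{dim}_\mathbb{R}\mathcal{H}_L(X)<\infty$. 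For the cokernel I would run the symmetric argument at index $s-m$: the relation $L\circ P-I_F\in \mathrm{OP}_{-1}(F)$ gives $(L\circ P)_{s-m}=I_F+S'$ with $S'$ compact, whose image is closed with finite-dimensional cokernel in $W^{s-m}_\mathcal{C}(F)(X)$ by Proposition~\ref{prop3}. Since $(L\circ P)_{s-m}=L_s\circ P_{s-m}$ as operators between the Sobolev completions, we get $\mathrm{Im}\big((L\circ P)_{s-m}\big)\subset \mathrm{Im}(L_s)$, so $\bigslant{W^{s-m}_\mathcal{C}(F)(X)}{L_s(W^s_\mathcal{C}(E)(X))}$ is a quotient of the finite-dimensional $\mathrm{Coker}\big((L\circ P)_{s-m}\big)$ and is therefore finite dimensional.

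The routine but essential bookkeeping is to track Sobolev indices through the two compositions: as $L$ has order $m$ and $P$ order $-m$, the composites $P\circ L$ and $L\circ P$ return to the same Sobolev space, so $(P\circ L)_s$ acts on $W^s_\mathcal{C}(E)(X)$ and $(L\circ P)_{s-m}$ on $W^{s-m}_\mathcal{C}(F)(X)$. One must also verify, via the composition rule for symbols (Theorem~\ref{symblprop}) and the mapping properties of the $\mathrm{OP}$-classes, that the error terms genuinely lie in $\mathrm{OP}_{-1}$ and extend compactly. The main obstacle is precisely this matching of domains and codomains across the Sobolev scale, together with confirming the inclusions $\mathrm{Ker}(L_s)\subset \mathrm{Ker}(I_E+S)$ and $\mathrm{Im}\big((L\circ P)_{s-m}\big)\subset \mathrm{Im}(L_s)$; once these are secured, Propositions~\ref{prop2} and~\ref{prop3} deliver the finiteness statements directly.
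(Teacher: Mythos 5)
Your proposal is correct and takes essentially the same route as the paper, which obtains Theorem~\ref{findim} exactly by combining elliptic regularity (Theorem~\ref{theorem2}) for statement (1) with the parametrix and Fredholm machinery (Theorem~\ref{theorem1}, Propositions~\ref{prop2} and~\ref{prop3}) for statement (2), following Wells. The only blemish is the shorthand ``$\mathcal{H}_{L_s}(X)\subset \mathcal{A}^0_\mathcal{C}(E)(X)=\mathcal{H}_L(X)$'': the displayed equality is of course false as written, and should instead read that $\alpha\in\mathcal{H}_{L_s}(X)$ is smooth by regularity and then $L\alpha=L_s\alpha=0$ because $L_s$ agrees with $L$ on smooth sections, giving $\mathcal{H}_{L_s}(X)\subset\mathcal{H}_L(X)$.
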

\begin{theorem}\cite[Chapter 4, Section 4, Theorem 4.11]{ROW}\label{ortho}
Let $L\in \mathrm{Diff}_m(E,F)$ be elliptic and suppose that $\beta\in \mathcal{H}_L^\perp(X)\cap \mathcal{A}^0_\mathcal{C}(F)(X)$. Then there exists a unique section $\alpha\in \mathcal{A}^0_\mathcal{C}(E)(X)$ such that $L(\alpha)=\beta$ and $\alpha$ is orthogonal to $\mathcal{H}_L(X)$ in $W^0_\mathcal{C}(E)(X)$.
\end{theorem}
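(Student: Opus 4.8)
The plan is to deduce the statement from the Fredholm theory for the elliptic operator $L$ established in Theorems \ref{theorem1}, \ref{theorem2} and \ref{findim}. The essential point is that, once extended to the Sobolev completions, $L$ becomes a Fredholm operator with closed range, so that solvability of $L\alpha=\beta$ is governed by a finite-dimensional orthogonality condition; elliptic regularity will then upgrade the Sobolev solution to a smooth one, and projecting away the kernel will produce the orthogonality and uniqueness.

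First I would pass to the Hilbert-space extensions. Since $\mathrm{Diff}_m(E,F)\subset \mathrm{OP}_m(E,F)$, the operator $L$ admits continuous extensions $L_s:W^s_\mathcal{C}(E)(X)\to W^{s-m}_\mathcal{C}(F)(X)$, and I would work with $L_m:W^m_\mathcal{C}(E)(X)\to W^0_\mathcal{C}(F)(X)$. By Theorem \ref{theorem1} there is a parametrix $P$ with $L\circ P-I_F\in \mathrm{OP}_{-1}(F)$ and $P\circ L-I_E\in \mathrm{OP}_{-1}(E)$; by Proposition \ref{prop2} (itself a consequence of Rellich's Proposition \ref{Relich}) these error terms are compact, so Proposition \ref{prop3} shows that $L_m$ has finite-dimensional kernel and closed range of finite codimension, i.e. $L_m$ is Fredholm.

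Next I would identify the obstruction space. As $\mathrm{Im}(L_m)$ is closed, $W^0_\mathcal{C}(F)(X)=\mathrm{Im}(L_m)\oplus \mathrm{Im}(L_m)^\perp$, and $g\in \mathrm{Im}(L_m)^\perp$ exactly when $(L\alpha,g)=0$ for every $\alpha$, i.e. $g$ is a weak solution of $L^\star g=0$. By Proposition \ref{adjoint} the formal adjoint $L^\star\in \mathrm{Diff}_m(F,E)$ exists, and it is again elliptic since $\sigma_m(L^\star)=\sigma_m(L)^\star$ is an isomorphism; hence Theorem \ref{theorem2} forces each such $g$ to be smooth, and $\mathrm{Im}(L_m)^\perp$ is the finite-dimensional space of smooth sections annihilated by $L^\star$. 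Reading the orthogonality hypothesis on $\beta$ as orthogonality to this space then gives $\beta\in \mathrm{Im}(L_m)$, so there is $\alpha\in W^m_\mathcal{C}(E)(X)$ with $L_m\alpha=\beta$.

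Finally I would obtain smoothness, orthogonality and uniqueness. Because $\beta\in \mathcal{A}^0_\mathcal{C}(F)(X)$, Theorem \ref{theorem2} yields $\alpha\in \mathcal{A}^0_\mathcal{C}(E)(X)$. Since $\mathcal{H}_L(X)=\ker L$ is finite-dimensional and smooth by Theorem \ref{findim}, I would replace $\alpha$ by $\alpha-H(\alpha)$, where $H$ denotes the orthogonal projection of $W^0_\mathcal{C}(E)(X)$ onto $\mathcal{H}_L(X)$: this subtracts a smooth element of $\ker L$, preserves $L\alpha=\beta$, and renders $\alpha$ orthogonal to $\mathcal{H}_L(X)$. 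Uniqueness is immediate, since the difference of two solutions orthogonal to $\mathcal{H}_L(X)$ lies in $\ker L=\mathcal{H}_L(X)$ and is orthogonal to it, hence is zero. The main obstacle is the middle step: proving that $L_m$ has \emph{closed} range and correctly matching $\mathrm{Im}(L_m)^\perp$ with the smooth kernel of the adjoint $L^\star$, which is where the parametrix, the compactness of $\mathrm{OP}_{-1}$ operators, and the regularity theorem must be combined, and where the orthogonality condition on $\beta$ should be understood as orthogonality to $\ker L^\star$.
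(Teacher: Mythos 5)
Your proposal is correct and takes essentially the same route as the paper, which gives no independent argument but invokes the parametrix--Fredholm--regularity proof of \cite[Chapter 4, Theorem 4.11]{ROW} that you reconstruct: closed range of $L_m$ via the parametrix and compactness of $\mathrm{OP}_{-1}$ operators (Propositions \ref{prop2}, \ref{prop3}, Theorem \ref{theorem1}), identification of $\mathrm{Im}(L_m)^\perp$ with the smooth kernel of the elliptic adjoint $L^\star$ by Theorem \ref{theorem2}, and projection off the finite-dimensional $\mathcal{H}_L(X)$ for orthogonality and uniqueness. You are also right to read the hypothesis $\beta\in \mathcal{H}_L^\perp(X)$ as orthogonality to $\ker L^\star=\mathcal{H}_{L^\star}(X)$, as in Wells's original statement, since for $E\neq F$ orthogonality of a section of $F$ to $\mathcal{H}_L(X)\subset W^0_\mathcal{C}(E)(X)$ would not typecheck.
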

\begin{definition}\rm{
A differential operator $L\in \mathrm{Diff}_m(E)$ is called self-adjoint if $L^\star=L$.
}
\end{definition}
Using previous Theorems \ref{theorem2}, \ref{findim} and \ref{ortho} and following the same line of arguments as in \cite[Chapter 4, Section 4, Theorem 4.12]{ROW}, we have a fundamental decomposition theorem for self-adjoint elliptic operators.
\begin{theorem}\cite[Chapter 4, Section 4, Theorem 4.12]{ROW}\label{orthodeco}
Let $L\in \mathrm{Diff}_m(E)$ be a self-adjoint elliptic operator. Then there exist elements $H_L,G_L\in \mathrm{Hom}_\mathcal{C}(E,E)$ such that,
\begin{enumerate}
\item $H_L(\mathcal{A}_\mathcal{C}^0(E)(X))=\mathcal{H}_L$ and $\mathrm{dim}(\mathcal{H}_L)<\infty$
\item $L\circ G_L+H_L=G_L\circ L+H_L=id|_{\mathcal{A}^0_\mathcal{C}(E)(X)}$
\item $H_L$ and $G_L$ are operators in $\mathrm{OP}_0(E)$, which can be extended to bounded operators on $W^0_\mathcal{C}(E)(X)$.
\item The decomposition $\mathcal{A}^0_\mathcal{C}(E)(X)=\mathcal{H}_L\oplus G_L\circ L(\mathcal{A}^0_\mathcal{C}(E)(X))=\mathcal{H}_L\oplus L\circ G_L(\mathcal{A}^0_\mathcal{C}(E)(X))$ is orthogonal with respect to an inner product on $W^0_\mathcal{C}(E)(X)$.
\end{enumerate}
\end{theorem}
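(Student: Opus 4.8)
The plan is to realize $H_L$ as the $L^2$-orthogonal projection onto the harmonic space $\mathcal{H}_L=\ker L$ and $G_L$ as the associated Green's operator, i.e. the two-sided inverse of $L$ on the orthogonal complement $\mathcal{H}_L^\perp$; the entire functional-analytic input is already packaged in Theorems \ref{theorem1}, \ref{theorem2}, \ref{findim} and \ref{ortho}, so the argument follows \cite[Chapter 4, Section 4, Theorem 4.12]{ROW} essentially verbatim. First I would apply Theorem \ref{findim} to the elliptic operator $L$: this gives $\mathcal{H}_L=\ker L\subset \mathcal{A}^0_\mathcal{C}(E)(X)$, that $\mathcal{H}_L$ is finite dimensional, and that $\ker L_s=\mathcal{H}_L$ for every $s$. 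Since $\mathcal{H}_L$ is a finite dimensional subspace of $W^0_\mathcal{C}(E)(X)$ consisting of smooth sections, the orthogonal projection $H_L\colon W^0_\mathcal{C}(E)(X)\to \mathcal{H}_L$ is well defined and bounded, restricts to a map on $\mathcal{A}^0_\mathcal{C}(E)(X)$ with $H_L(\mathcal{A}^0_\mathcal{C}(E)(X))=\mathcal{H}_L$, and — having finite dimensional image spanned by smooth harmonic sections — is a smoothing operator, hence lies in $\mathrm{OP}_0(E)$. This establishes claim (1) and the $H_L$ part of claim (3). Because $H_L$ and $G_L$ are built canonically from $L$ and the $d$-Hermitian inner product (both compatible with the $d$-complex gluing), they genuinely lie in $\mathrm{Hom}_\mathcal{C}(E,E)$.

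To construct $G_L$ I would first record that self-adjointness gives $\ker L=\ker L^\star$, so for $h\in\mathcal{H}_L$ and any $\beta$ one has $\langle h,L\beta\rangle=\langle L^\star h,\beta\rangle=0$; thus $\mathrm{Im}(L)\perp\mathcal{H}_L$, i.e. $\mathrm{Im}(L)\subseteq\mathcal{H}_L^\perp$. Conversely, Theorem \ref{ortho} shows that every $\beta\in\mathcal{H}_L^\perp\cap\mathcal{A}^0_\mathcal{C}(E)(X)$ has a unique smooth preimage $\alpha\perp\mathcal{H}_L$ with $L\alpha=\beta$. Using the orthogonal splitting $\mathcal{A}^0_\mathcal{C}(E)(X)=\mathcal{H}_L\oplus\bigl(\mathcal{H}_L^\perp\cap\mathcal{A}^0_\mathcal{C}(E)(X)\bigr)$ afforded by $H_L$, I would define $G_L$ to be $0$ on $\mathcal{H}_L$ and $G_L\beta=\alpha$ on the complement. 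By construction $G_L$ has image in $\mathcal{H}_L^\perp$ and satisfies $L\circ G_L+H_L=\mathrm{id}$; applying the same description after first projecting off $\mathcal{H}_L$ and inverting gives $G_L\circ L+H_L=\mathrm{id}$ as well, which is claim (2).

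The main obstacle is claim (3) for $G_L$: that $G_L$ defines an operator of order $0$, i.e. extends to a bounded map on $W^0_\mathcal{C}(E)(X)$ (indeed to bounded maps $W^{s-m}_\mathcal{C}(E)(X)\to W^s_\mathcal{C}(E)(X)$). Here I would invoke the parametrix from Theorem \ref{theorem1}: there is $P\in\mathrm{PDiff}_{-m}(E)$ with $PL-I$ and $LP-I$ in $\mathrm{OP}_{-1}(E)$, and by Proposition \ref{prop2} such operators are compact of order $0$. This yields the a priori elliptic estimate $\|\alpha\|_s\le C\bigl(\|L\alpha\|_{s-m}+\|\alpha\|_0\bigr)$, and on $\mathcal{H}_L^\perp$ the lower-order term $\|\alpha\|_0$ can be absorbed since $L$ is injective there once the finite dimensional kernel is removed (a standard compactness argument using Proposition \ref{Relich}). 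The resulting bound $\|G_L\beta\|_s\le C\|\beta\|_{s-m}$ shows $G_L\in\mathrm{OP}_0(E)$ and is bounded on $W^0_\mathcal{C}(E)(X)$, while Theorem \ref{theorem2} guarantees $G_L$ preserves smoothness.

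Finally, for claim (4) I would combine $L\circ G_L+H_L=\mathrm{id}$ with the orthogonality $\mathrm{Im}(L)\perp\mathcal{H}_L$: every $\alpha\in\mathcal{A}^0_\mathcal{C}(E)(X)$ splits orthogonally as $\alpha=H_L\alpha+L G_L\alpha$, giving $\mathcal{A}^0_\mathcal{C}(E)(X)=\mathcal{H}_L\oplus L\circ G_L(\mathcal{A}^0_\mathcal{C}(E)(X))$. Since $G_L$ is by construction a two-sided inverse of $L$ on $\mathcal{H}_L^\perp$, it commutes with $L$, so $L\circ G_L(\mathcal{A}^0_\mathcal{C}(E)(X))=G_L\circ L(\mathcal{A}^0_\mathcal{C}(E)(X))$, which yields the second form of the decomposition and completes the proof.
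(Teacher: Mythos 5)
Your proposal is correct and follows essentially the same route as the paper, which itself just invokes Theorems \ref{theorem2}, \ref{findim} and \ref{ortho} and defers to the argument of \cite[Chapter 4, Section 4, Theorem 4.12]{ROW}: you spell out exactly that argument, with $H_L$ the projection onto the finite-dimensional harmonic space, $G_L$ the Green's operator furnished by Theorem \ref{ortho}, boundedness via the parametrix of Theorem \ref{theorem1} together with Proposition \ref{prop2} and a Rellich absorption, and the identity $L\circ G_L = \mathrm{id}-H_L = G_L\circ L$ yielding the orthogonal decomposition. Your brief remark that $H_L$ and $G_L$ respect the $d$-complex gluing (so lie in $\mathrm{Hom}_\mathcal{C}(E,E)$) is the one point specific to this setting, and you handle it at the same level of detail as the paper does.
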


%%%%%%%%%%%%%%%%%%%%%%%%%%%%%%%%%%%%%%%%%%%%%%%%%%%%%%%%%%%%%%%%%%%%%%%

\end{document}